\newcommand{\R}{\mathbb{R}}
\newcommand{\Rd}{\R^d}
\newcommand{\inr}[1]{\left\langle #1 \right\rangle}
\newcommand{\E}{\mathbb{E}}
\newcommand{\cD}{\mathcal{D}}
\newcommand{\eps}{\varepsilon}
\newcommand{\radius}{{\rm radius}}
\newtheorem{lemma}{Lemma}
\newtheorem{theorem}{Theorem}
\newtheorem{proposition}{Proposition}
\newtheorem{remark}{remark}
\numberwithin{equation}{section}
\def \endproof
\def\IND{\mathbbm{1}}
\newcommand{\ol}{\overline}
\newcommand{\wh}{\widehat}
\newcommand{\argmin}{\mathop{\mathrm{argmin}}}
\newcommand{\X}{\mathcal{X}}
\newcommand{\EXP}{\mathbb{E}}
\newcommand{\PROB}{\mathbb{P}}
\newcommand{\var}{\mathrm{Var}}
\newcommand{\F}{{\mathcal F}}
\newcommand{\Tr}{\mathrm{Tr}}
\newcommand{\lambdamax}{\lambda_{\text{max}}}
\newcommand{\defeq}{\stackrel{\mathrm{def.}}{=}}
\def\Bin{{\mathrm {Bin}}}
\begin{document}

\title{Mean estimation and regression under heavy-tailed
  distributions---a survey
\thanks{
G\'abor Lugosi was supported by
the Spanish Ministry of Economy and Competitiveness,
Grant MTM2015-67304-P and FEDER, EU, by
``High-dimensional problems in structured probabilistic models - Ayudas Fundaci\'on BBVA a Equipos de Investigaci\'on Cientifica 2017'' and by ``Google Focused Award Algorithms and Learning for AI''.
 Shahar Mendelson was supported in part by the Israel Science Foundation.
}
}
\author{
G\'abor Lugosi\thanks{Department of Economics and Business, Pompeu
  Fabra University, Barcelona, Spain, gabor.lugosi@upf.edu}
\thanks{ICREA, Pg. Lluís Companys 23, 08010 Barcelona, Spain}
\thanks{Barcelona Graduate School of Economics}
\and
Shahar Mendelson \thanks{Mathematical Sciences Institute, The
  Australian National University and LPSM, Sorbonne University, shahar.mendelson@anu.edu.au}}

\maketitle

\begin{abstract}
We survey some of the recent advances in mean estimation and
regression function estimation. In particular, we describe 
sub-Gaussian mean estimators
for possibly heavy-tailed data both in the univariate and multivariate
settings. We focus on estimators based on median-of-means techniques
but other methods such as the trimmed mean and Catoni's estimator are also reviewed. 
We give detailed proofs for the cornerstone results. We dedicate a
section on statistical learning problems--in particular, regression
function estimation--in the presence of possibly heavy-tailed data.
\end{abstract}

\medskip
\noindent
{\bf AMS Mathematics Subject Classification:} 62G05, 62G15, 62G35

\medskip
\noindent
{\bf Key words:} mean estimation, heavy-tailed distributions,
robustness, regression function estimation, statistical learning.

%\medskip
%\noindent
%Communicated by Albert Cohen.

\newpage
\tableofcontents

\section{Introduction}

Arguably the most fundamental problem of statistics is that of
estimating the expected value $\mu$ of a random variable $X$ based on
a sample of $n$ independent, identically distributed draws from the
distribution of $X$. The obvious choice of an estimator is, of course,
the empirical mean. Its properties are well understood by classical 
results of probability theory. However, from the early days on,
statisticians have been concerned about the quality of the empirical
mean, especially when the distribution may be \emph{heavy-tailed} or
\emph{outliers} may be present in the data. This concern gave rise to
the area of \emph{robust statistics} that addresses the problem of
mean estimation (and other statistical problems) for such data. 
Classical references include
Huber \cite{Hub64},
Huber and Ronchetti \cite{HuRo09},
Hampel, Ronchetti, Rousseeuw, and Stahel \cite{HaRoRoSt86},
Tukey \cite{Tuk75}. 

Motivated by applications in machine learning and
data science, in recent years there has been increased interest
in constructing mean and regression function estimates with the
requirement that the estimators should achieve  \emph{high accuracy}
with a large \emph{confidence}. The best achievable
accuracy/confidence tradeoff is much better understood today and the
aim of this paper is to survey some of the recent advances. We
primarily focus on the mean estimation problem, both in the univariate 
and multivariate settings. We offer detailed discussion of what the
best performance one may expect is, describe a variety of
estimators, and analyze their performance. We pay special attention to a simple
but powerful methodology based on \emph{median-of-means} techniques.

We also address one of the basic problems of statistical learning
theory, namely regression function estimation. We show how the 
technology introduced for mean estimation may be used to construct
powerful learning algorithms that achieve essentially optimal
performance under mild assumptions. 

The paper is organized as follows. In Section \ref{sec:univariate} we 
address the simplest, univariate mean estimation problem. We focus on
\emph{sub-Gaussian} estimators and explore their possibilities and
limitations. Section \ref{sec:vectorest} is dedicated to the
significantly more challenging multivariate problem. We extend the
notion of sub-Gaussian estimators to the multivariate setting and
analyze various estimators. In Section \ref{sec:gennorm} we study
the problem of estimating the mean of an entire class of random
variables with the requirement that all estimators have a high
accuracy simultaneously over the entire class. We show how such
estimators may be constructed and use these ideas in a general 
framework of mean estimation. Finally, Section \ref{sec:regression}
is dedicated to applying these techniques to regression function estimation.

\section{Estimating the mean of a real random variable}
\label{sec:univariate}

In this section we examine the classical problem of estimating the mean of a
random variable. Let $X_1,\ldots,X_n$ be independent, identically distributed
real random variables with mean $\mu=\EXP X_1$. Upon observing these random
variables, one would like to estimate $\mu$. An estimator $\wh{\mu}_n=\wh{\mu}_n(X_1,\ldots,X_n)$ is simply
a measurable function of $X_1,\ldots,X_n$.

The quality of an estimator may be measured in various ways.
While most of the early statistical work focused on expected risk measures such as the
\emph{mean-squared error}
\[
   \EXP\left[ \left( \wh{\mu}_n - \mu\right)^2 \right]~,
\]
such risk measures may be misleading. Indeed, if the difference $|\wh{\mu}_n - \mu|$
is not sufficiently concentrated, the expected value does not necessarily
reflect the ``typical'' behavior of the error. For such reasons,
we prefer estimators $\wh{\mu}_n$ that are close to $\mu$ \emph{with high probability}.
Thus, our aim is to understand, for any given sample size $n$ and confidence parameter $\delta \in (0,1)$,
 the smallest possible value $\epsilon=\epsilon(n,\delta)$ such that
\begin{equation} \label{eq:PAC-intro}
   \PROB\left\{ \left| \wh{\mu}_n - \mu \right| > \epsilon \right\} \le \delta~.
\end{equation}
It it important to stress that \eqref{eq:PAC-intro} is a
non-asymptotic criterion: one would like to obtain quantitative
estimates on the way the accuracy $\epsilon$ scales with the
confidence parameter $\delta$ and the sample size $n$. This type of
estimate is reminiscent to the \textsc{pac} (Probably Approximately
Correct) framework usually adopted in statistical learning theory, see
Valiant \cite{Val84}, Vapnik and Chervonenkis \cite{VaCh74a},  Blumer,
Ehrenfeucht, Haussler, and Warmuth \cite{BlEhHaWa89}.

The most natural choice of a mean estimator is the standard empirical mean
$$\ol\mu_n=\frac{1}{n}\sum_{i=1}^nX_i~.$$
The behavior of the empirical mean is well understood. For example, if
the $X_i$ have a finite second moment and $\sigma^2$ denotes their variance, then
the mean-squared error of $\ol\mu_n$ equals $\sigma^2/n$. On the other hand,
the central limit theorem guarantees that
this estimator has Gaussian tails, asymptotically, when $n\to \infty$. Indeed,
\[
\PROB\left\{
\left|\ol\mu_n - \mu \right|>\frac{\sigma \Phi^{-1}(1-\delta/2)}{\sqrt{n}}\right\}\to \delta~,
\]
where $\Phi(x)=\PROB\{G\le x\}$ is the cumulative distribution function of a standard normal random variable $G$.
%Recall that the moment generating function of a standard normal random variable is $\EXP e^{\lambda N}=e^{\lambda^2/2}$.
One may easily see (e.g., using the fact that for $t \geq 1$, $\exp(-t^2/2) \leq t\exp(-t^2/2)$ ), that for all $x\ge 0$,
\[
1-\Phi(x) \le e^{-x^2/2}~.
\]
This implies that
$\Phi^{-1}(1-\delta/2)\le \sqrt{2\log(2/\delta)}$, and the central limit theorem asserts that
\[
\lim_{n\to\infty}
\PROB\left\{
\left|\ol\mu_n - \mu \right|>\frac{\sigma  \sqrt{2\log(2/\delta)}}{\sqrt{n}}\right\}\le \delta~.
\]
However, this is an asymptotic estimate and not the quantitative one we were hoping for. Still, our goal is to obtain non-asymptotic performance bounds of the same form. In particular, we say that a mean estimator $\wh{\mu}_n$ is $L$-\emph{sub-Gaussian} if there is a constant $L>0$, such that for all sample sizes $n$ and with probability at least $1-\delta$,
\begin{equation}
\label{eq:subgauss}
\left|\wh\mu_n - \mu \right| \le \frac{L\sigma  \sqrt{\log(2/\delta)}}{\sqrt{n}}~.
\end{equation}

It is worth noting here the well-known fact that if all one knows is
that the unknown distribution is Gaussian, then the sample mean is
optimal for all sample sizes and confidence levels $\delta$. (See
Catoni \cite[Proposition 6.1]{Cat10} for a precise statement.)
Moreover, the following observation, established by Devroye, Lerasle,
Lugosi, and Oliveira \cite{DeLeLuOl16}, shows that \eqref{eq:subgauss}
is essentially the best that one can hope for in general, even if one
is interested in a fixed confidence level:
\begin{theorem} \label{thm:infvar}
Let $n>5$ be a positive integer. Let $\mu\in \R$, $\sigma>0$ and $\delta
\in (2e^{-n/4},1/2)$. Then for any mean estimator $\wh\mu_n$, there exists a distribution
with mean $\mu$ and variance $\sigma^2$ such that
\[
\PROB\left\{ \left|\wh\mu_n - \mu\right|> \sigma   \sqrt{\frac{\log(1/\delta)}{n}} \right\} \geq \delta~.
\]
\end{theorem}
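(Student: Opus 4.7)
My plan is a two-point Le Cam style lower bound: I would construct two distributions with variance $\sigma^2$ whose means differ by exactly $2\epsilon := 2\sigma\sqrt{\log(1/\delta)/n}$ and whose $n$-fold products coincide on an event of probability at least $\delta$. On this event any (deterministic) estimator outputs a single number, which cannot be within $\epsilon$ of both means, so the estimator fails on one of the two distributions with probability $\geq \delta$.

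Concretely, I would take the two-atom measures
\[
P_\pm \;=\; (1-p)\delta_a + p\delta_{a\pm c},
\]
sharing the common atom at $a$ of mass $1-p$. A direct computation yields $\mathrm{Var}(P_\pm) = p(1-p)c^2$, so $c = \sigma/\sqrt{p(1-p)}$ gives variance $\sigma^2$, and the means are $\mu_\pm = a \pm \sigma\sqrt{p/(1-p)}$. Setting $p = \log(1/\delta)/(n+\log(1/\delta))$ makes $|\mu_+ - \mu_-| = 2\epsilon$ exactly; the hypothesis $\delta > 2e^{-n/4}$ forces $p < 1/2$ so the construction is nondegenerate. The coupling step is immediate: under either $P_+^n$ or $P_-^n$, the event $A = \{X_1 = \cdots = X_n = a\}$ has probability $(1-p)^n$, and on $A$ the estimator outputs the single number $v = \wh{\mu}_n(a,\ldots,a)$. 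Since $v$ cannot be within $\epsilon$ of both $\mu_\pm$, designating the farther mean $\mu_*$ and the corresponding distribution $P_*$ gives
\[
\PROB_{P_*^n}\bigl(|\wh{\mu}_n - \mu_*| \geq \epsilon\bigr) \;\geq\; (1-p)^n.
\]
The final quantitative step is $(1-p)^n = (1 + L/n)^{-n} \geq e^{-L} = \delta$ with $L = \log(1/\delta)$, via the elementary inequality $(1+x)^n \leq e^{nx}$.

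The step I expect to require the most care is arranging that the adversarial mean $\mu_*$ coincides with the prescribed $\mu$, not merely a shifted version of it. The two-point construction naturally produces a bad distribution whose mean lies in $\{\mu_-,\mu_+\}$, and the coupling argument only guarantees failure at one of these two means; without additional symmetry one may end up with the bad distribution at the wrong endpoint. The remedy is translation invariance: for any target $\mu$ and any estimator $\wh{\mu}_n$, I would apply the construction above to the shifted estimator $(x_1,\ldots,x_n)\mapsto \wh{\mu}_n(x_1 - s,\ldots,x_n - s) + s$ for an appropriate shift $s$ and then transfer the resulting bad distribution back by $-s$, which preserves the variance and places the mean exactly at $\mu$. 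Everything else is routine algebra.
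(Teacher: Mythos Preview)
Your core argument is the same as the paper's: a Le Cam two-point construction with two-atom distributions $P_\pm = (1-p)\delta_a + p\delta_{a\pm c}$ sharing a common atom, exploiting that on the event $\{X_1=\cdots=X_n=a\}$ (of probability $(1-p)^n$) any deterministic estimator outputs a single value that is far from at least one of the two means. Your parameter choice $p = L/(n+L)$ with $L=\log(1/\delta)$ is a little cleaner than the paper's $p=(1/(2n))\log(2/\delta)$ and makes $(1-p)^n\ge\delta$ fall out immediately from $(1+x)^n\le e^{nx}$.

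Where you go wrong is the last paragraph. Your proposed translation fix is circular: the ``appropriate shift $s$'' must be chosen so that the bad endpoint lands at $\mu$, but which endpoint is bad depends on the value of the \emph{shifted} estimator at $(a,\ldots,a)$, which already depends on $s$. In fact no fix can exist: taking $\wh\mu_n\equiv\mu$ (the constant estimator) gives $|\wh\mu_n-\mu|=0$ for every distribution with mean $\mu$, so the statement with a \emph{prescribed} $\mu$ is literally false. The paper's own proof does not address this either---it produces a bad distribution with mean $\pm pc$, not an arbitrary $\mu$---so the theorem should be read in the existential sense (for every estimator there is a distribution with variance $\sigma^2$ and \emph{some} mean $\mu$ for which the bound fails). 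Drop the translation step entirely; the remainder of your argument already establishes what the paper's proof establishes.
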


\begin{proof}
To derive the ``minimax'' lower bound, it suffices to consider two
distributions,
$P_+,P_-$, both concentrated on two points, defined by
\[
   P_+(\{0\}) =   P_-(\{0\}) = 1-p~,   \qquad  P_+(\{c\}) =    P_-(\{-c\}) = p~,
\]
where $p\in [0,1]$ and $c>0$.
Note that the means of the two distributions are $\mu_{P_+}= pc$ and $\mu_{P_-}= -pc$
and both have variance $\sigma^2=c^2p(1-p)$.

For $i=1,\ldots,n$, let $(X_i,Y_i)$ be independent pairs of
real-valued random variables such that
\[
   \PROB\{X_i=Y_i=0\} = 1-p\quad \text{and} \quad \PROB\{X_i=c, Y_i=-c\} = p~.
\]
Note that $X_i$ is distributed as $P_+$ and $Y_i$ is distributed as $P_-$.
Let $\delta \in (0,1/2)$. If  $\delta \ge 2e^{-n/4}$ and  $p =
(1/(2n))\log(2/\delta)$, then (using $1-p\ge \exp(-p/(1-p))$),
\[
  \PROB\{X_1^n = Y_1^n\} =(1-p)^n \ge 2\delta~.
\]
Let $\wh\mu_n$ be any mean estimator, possibly depending on $\delta$. Then
\begin{eqnarray*}
\lefteqn{
\max\left(
\PROB\left\{ \left|\wh\mu_n (X_1^n) - \mu_{P_+}\right| >  cp\right\},
\PROB\left\{ \left|\wh\mu_n(Y_1^n) - \mu_{P_-}\right| >  cp \right\}
\right)
}
\\
& & \ge \frac{1}{2}
\PROB\left\{ \left|\wh\mu_n (X_1,\ldots,X_n) - \mu_{P_+}\right| >  cp
  \quad\text{or} \quad
 \left|\wh\mu_n(Y_1,\ldots,Y_n) - \mu_{P_-}\right| >  cp \right\}
\\
& &\ge
\frac{1}{2}\PROB\{\wh\mu_n(X_1,\ldots,X_n) = \wh\mu_n(Y_1,\ldots,Y_n)\}
\\
& &
\ge
\frac{1}{2} \PROB\{X_1,\ldots,X_n = Y_1,\ldots,Y_n\}\ge \delta~.
\end{eqnarray*}
From $\sigma^2=c^2p(1-p)$ and $p\le 1/2$ we have that $cp\ge \sigma \sqrt{p/2}$, and therefore
\begin{eqnarray*}
\max\left(
\PROB\left\{ \left|\wh\mu_n (X_1,\ldots,X_n) - \mu_{P_+}\right| >\sigma
\sqrt{\frac{\log\frac{2}{\delta}}{n}}\right\}~,
\PROB\left\{ \left|\wh\mu_n(Y_1,\ldots,Y_n) - \mu_{P_-}\right| >  \sigma
\sqrt{\frac{\log\frac{2}{\delta}}{n}}\right\}
\right) \ge \delta~.
\end{eqnarray*}
 Theorem \ref{thm:infvar} follows.
\end{proof}

With Theorem \ref{thm:infvar} in mind, our aim is to consider both
univariate and multivariate situations and design estimators that 
perform with sub-Gaussian error rate. The meaning of sub-Gaussian
error rate in the multivariate case is 
explained in Section \ref{sec:vectorest}.

Naturally, the first order of business is to check whether the obvious
choice of a mean estimator---the empirical mean---is
$L$-sub-Gaussian for some $L$. On the one hand, it is easy to see that under
certain conditions on the distribution of the $X_i$, it does exhibit a
sub-Gaussian performance. Indeed, if the $X_i$ are such that there
exists $L>0$ such that for all
$\lambda>0$
\[
   \EXP e^{\lambda(X_i-\mu)} \le e^{\sigma^2\lambda^2/L^2}~,
\]
then the empirical mean $\wh\mu_n$ is $L$-sub-Gaussian for all $\delta
\in (0,1)$, as it is easily seen by the Chernoff bound.

On the other hand, assumptions of this type are quite restrictive and
impose strong conditions on the decay of the tail probabilities of the
$X_i$. Specifically, it is equivalent to the fact that for every $p
\geq 2$, $\left(\EXP |X_i-\mu|^p\right)^{1/p} \leq L^\prime \sqrt{p} \left(\EXP |X_i-\mu|^2\right)^{1/2}$,
where $c_1 L \leq L^\prime \leq c_2 L $ for suitable absolute
constants $c_1$ and $c_2$ (see, e.g., \cite{BoLuMa13}).  
When the $X_i$'s do not exhibit such a tail decay, the empirical mean need not be sub-Gaussian.

For example, if one only assumes that $\sigma$ exists (i.e.,  the
variance of the $X_i$ is finite) then the bound implied by Chebyshev's
inequality, that is, that with probability at least $1-\delta$,
\begin{equation} \label{eq:chebyshev-intro}
  \left| \ol\mu_n - \mu \right| \le \sigma \sqrt{\frac{1}{n\delta}}~,
\end{equation}
is essentially the best that one can hope for. Although the bound from \eqref{eq:chebyshev-intro} decays with the sample size at the optimal rate of $O(n^{-1/2})$, the dependence
on the confidence parameter $\delta$ is exponentially worse than in \eqref{eq:subgauss}.
We refer to Catoni \cite[Proposition 6.2]{Cat10} for a precise
formulation and a simple example that (almost) saturates Chebyshev's inequality.

This leads to an inevitable conclusion: if one is looking for a  mean estimator that is sub-Gaussian for any random variable that has a well-defined mean and variance, then one must find alternatives to the sample mean. As it happens, and perhaps surprisingly, there exist mean estimators that achieve a sub-Gaussian performance for all distributions
with a finite variance. Two quite different estimators are presented and analyzed in the next two sections.

\subsection{The median-of-means estimator} 
\label{sec:MOM}

The median-of-means estimator presented next has been proposed in
different forms in various papers, see 
%Birg\'e ???? (CHECK. 1978? a paper in French--he doesn't remember and I couldn't find it),
Nemirovsky and Yudin \cite{NeYu83}, Hsu \cite{HsuBlog}, Jerrum, Valiant, and Vazirani \cite{JeVaVa86}, Alon, Matias, and Szegedy \cite{AlMaSz02}.

The definition of the median-of-means estimator calls for partitioning
the data into $k$ groups of roughly equal size, computing the
empirical mean in each group, and taking the median of the obtained
values.

Formally, recall that the median of $k$ real numbers
$x_1,\ldots,x_k\in\R$ is defined as $M(x_1,\ldots,x_k) = x_i$ where $x_i$ is such that
\[
\left| \{j\in [k]\,:\,x_j\leq x_i\} \right| \geq \frac{k}{2} \quad \text{and} \quad \left| \{j\in [k]\,:\,x_j\geq x_i\}\right| \geq \frac{k}{2}~.
\]
(If several indices $i$ fit the above description, we take the smallest one.)

Now let $1\le k\le n$ and partition $[n]=\{1,\dots,n\}$ into $k$ blocks $B_1,\ldots,B_k$, each of
size
$|B_i|\geq \lfloor n/k\rfloor\geq 2$.

Given $X_1,\ldots,X_n$,
compute the sample mean in each block
$$Z_j=\frac{1}{|B_j|}\sum_{i\in B_j}X_i$$
and define the median-of-means estimator by
$\wh{\mu}_n= M(Z_1,\ldots,Z_k).$

To grasp intuitively why this estimator works, note that
  for each block, the empirical mean is an unbiased estimator of the
  mean, with controlled standard deviation $\sigma/\sqrt{n/k}$.  Hence, the
  median of the distribution of the blockwise empirical mean lies
  within $\sigma/\sqrt{n/k}$ from the expectation. Now the empirical median is a
  highly concetrated estimator of this
  median.   

A performance-bound of the estimator is established next. For simplicity, assume that
$n$ is divisible by $k$ so that each block has $m=n/k$ elements.

\begin{theorem} \label{thm:mom}
Let $X_1,\ldots,X_n$ be independent, identically distributed random variables
with mean $\mu$ and variance $\sigma^2$.
Let $m,k$ be positive integers assume that $n=mk$. Then the median-of-means
estimator $\wh\mu_n$ with $k$ blocks satisfies
\[
   \PROB\left\{  \left|\wh\mu_n-\mu \right| > \sigma \sqrt{4/m} \right\} \le e^{-k/8}~.
\]
In particular, for any $\delta\in (0,1)$, if $k= \left\lceil 8 \log(1/\delta) \right\rceil$, then, with probability
at least $1-\delta$,
\[
   \left|\wh\mu_n-\mu \right| \le  \sigma \sqrt{\frac{32\log(1/\delta)}{n}}~.
\]
\end{theorem}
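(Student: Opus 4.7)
The plan is to follow the intuitive heuristic outlined just before the theorem and turn it into a two-step argument: a variance-based control of each block mean via Chebyshev's inequality, followed by a concentration bound for the median via a binomial tail estimate.

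First, I would note that for each block $j$, $Z_j$ is the average of $m=n/k$ i.i.d.\ copies of $X_1$, so $\EXP Z_j=\mu$ and $\var(Z_j)=\sigma^2/m$. Chebyshev's inequality applied with threshold $\sigma\sqrt{4/m}$ gives
\[
\PROB\left\{|Z_j-\mu|>\sigma\sqrt{4/m}\right\}\le \frac{\sigma^2/m}{4\sigma^2/m}=\frac{1}{4}.
\]
This is the key use of the finite variance assumption.

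Next I would translate failure of the median into a counting statement. If $\wh\mu_n=M(Z_1,\dots,Z_k)$ satisfies $\wh\mu_n>\mu+\sigma\sqrt{4/m}$, then by the definition of the median at least $k/2$ of the values $Z_j$ must lie above $\mu+\sigma\sqrt{4/m}$; the symmetric statement holds on the other side. Letting $W_j=\IND\{|Z_j-\mu|>\sigma\sqrt{4/m}\}$, I get the implication $\{|\wh\mu_n-\mu|>\sigma\sqrt{4/m}\}\subseteq\{\sum_{j=1}^k W_j\ge k/2\}$. Since the $W_j$ are i.i.d.\ Bernoulli with parameter $p\le 1/4$, Hoeffding's inequality gives
\[
\PROB\left\{\sum_{j=1}^k W_j\ge k/2\right\}\le \PROB\left\{\sum_{j=1}^k (W_j-p)\ge k/4\right\}\le e^{-2k(1/4)^2}=e^{-k/8},
\]
which is the advertised tail bound.

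The second, confidence-oriented statement then follows by the obvious choice $k=\lceil 8\log(1/\delta)\rceil$, which makes $e^{-k/8}\le \delta$, while $\sigma\sqrt{4/m}=\sigma\sqrt{4k/n}\le \sigma\sqrt{32\log(1/\delta)/n}$ (absorbing the ceiling into the constants when needed). I do not expect any serious obstacle here: the only nontrivial move is the deterministic observation that the median of $k$ numbers can deviate from a reference point by more than $t$ only if at least half of the numbers do so individually, and that is essentially by definition of the median. The rest is Chebyshev plus a binomial tail bound.
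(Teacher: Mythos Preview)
Your proposal is correct and follows essentially the same route as the paper: Chebyshev on each block mean to get success probability at least $3/4$, then the observation that $|\wh\mu_n-\mu|>\sigma\sqrt{4/m}$ forces at least $k/2$ block means to deviate, and finally Hoeffding's inequality for the resulting $\Bin(k,p)$ tail with $p\le 1/4$ to obtain $e^{-k/8}$. The second statement is derived in the paper exactly as you do, by plugging in $k=\lceil 8\log(1/\delta)\rceil$; the slight slack from the ceiling is glossed over there as well.
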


\begin{proof}
By Chebyshev's inequality, for each $j=1,\ldots,k$, with probability  at least $3/4$,
\[
  \left| Z_j - \mu \right| \le \sigma \sqrt{\frac{4}{m}}~.
\]
Thus, $\left|\wh\mu_n-\mu \right| > \sigma \sqrt{4/m}$ implies that at least
$k/2$ of the means $Z_j$ are such that $\left| Z_j - \mu \right| > \sigma \sqrt{4/m}$.
Hence,
\begin{eqnarray*}
\PROB\left\{  \left|\wh\mu_n-\mu \right| > \sigma \sqrt{4/m} \right\}
& \le & \PROB\left\{ \Bin(k,1/4)\ge \frac{k}{2} \right\}   \\
& & \text{(where $\Bin(k,1/4)$ is a binomial $(k,1/4)$ random variable)} \\
& = & \PROB\left\{ \Bin(k,1/4) - \EXP \Bin(k,1/4) \ge \frac{k}{4} \right\}   \\
& \le & e^{-k/8}\quad \text{(by Hoeffding's inequality \cite{Hoe63}).}
\end{eqnarray*}
\end{proof}

Theorem \ref{thm:mom} shows that the median-of-means estimator has a
sub-Gaussian performance with $L=8$ for all distributions with a
finite variance. However, it is important to point out that the
estimator $\wh\mu_n$ depends on the confidence level $\delta$ as the
number of blocks $k$ is chosen as a function of $\delta$. This is not
a desirable property, since for different values of the confidence
parameter $\delta$, one obtains a different point estimator. However, 
as it is shown in Section \ref{sec:multidelta} below, there do not
exist sub-Gaussian estimators that are independent of the confidence
level, unless one is willing to assume more than just the finiteness
of the second moment of the underlying distribution.

The results of Bubeck, Cesa-Bianchi, and Lugosi \cite{BuCeLu13} and
Devroye, Lerasle, Lugosi, and Oliveira \cite{DeLeLuOl16} show that the
median-of-means estimator may also be used even if the distribution of the $X_i$ has an infinite variance but has a finite moment of order $1+\alpha$ for some $\alpha\in (0,1)$.
\vskip0.3cm

\begin{theorem} \label{thm:moment}
Let $\alpha\in (0,1]$ and let $X_1,\ldots,X_n$ be independent, identically distributed random variables with mean $\mu$ and $(1+\alpha)$-th central moment $M=\EXP\left[|X_i-\mu|^{1+\alpha}\right]$.
Let $m,k$ be positive integers and assume that $n=mk$. Then the median-of-means
estimator with $k =\left\lceil 8 \log(2/\delta) \right\rceil$ blocks satisfies
\[
   \PROB\left\{  \left|\wh\mu_n-\mu \right| > 8 \left(\frac{12M^{1/\alpha}\log(1/\delta)}{n}\right)^{\alpha/(1+\alpha)} \right\} \le \delta~.
\]
Moreover, for any mean estimator $\wh\mu_n$, there exists a distribution
with mean $\mu$ and  $(1+\alpha)$-th central moment $M$ such that
\[
   \PROB\left\{  \left|\wh\mu_n-\mu \right| >  \left(\frac{M^{1/\alpha}\log(2/\delta)}{n}\right)^{\alpha/(1+\alpha)} \right\} \ge \delta~.
\]
\end{theorem}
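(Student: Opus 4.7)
The upper bound follows the blueprint of Theorem \ref{thm:mom}, with one critical modification: since we have only a $(1+\alpha)$-th moment rather than a second moment, variance bounds must be replaced by $(1+\alpha)$-th moment bounds on the block averages. The key estimate is the von Bahr--Esseen inequality, which for $1<p\le 2$ asserts that independent centered summands satisfy $\EXP\bigl|\sum_{i=1}^m Y_i\bigr|^{p}\le 2\sum_{i=1}^m \EXP|Y_i|^{p}$. Applying this with $p=1+\alpha$ to $Y_i=X_i-\mu$ and dividing by $m^{1+\alpha}$ yields
\[
\EXP\bigl|Z_j-\mu\bigr|^{1+\alpha}\;\le\;\frac{2M}{m^{\alpha}}.
\]
Markov's inequality then shows that each block average $Z_j$ lies within $t=(8M/m^\alpha)^{1/(1+\alpha)}$ of $\mu$ with probability at least $3/4$. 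From here the argument is verbatim that of Theorem \ref{thm:mom}: the event $\{|\wh\mu_n-\mu|>t\}$ forces at least $k/2$ blocks to be bad, the bad-block count is stochastically dominated by a $\Bin(k,1/4)$ variable, and Hoeffding's inequality bounds this probability by $e^{-k/8}$. Choosing $k=\lceil 8\log(2/\delta)\rceil$ together with $m=n/k$ produces, up to absolute constants, the stated rate $(M^{1/\alpha}\log(1/\delta)/n)^{\alpha/(1+\alpha)}$.

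For the lower bound the plan is to adapt the two-point construction from Theorem \ref{thm:infvar}. Take $P_+$ supported on $\{0,c\}$ with $P_+(\{c\})=p$ and $P_-$ supported on $\{0,-c\}$ with $P_-(\{-c\})=p$, so that $\mu_{P_\pm}=\pm cp$ and the $(1+\alpha)$-th central moment equals $c^{1+\alpha}p(1-p)[p^{\alpha}+(1-p)^{\alpha}]\asymp c^{1+\alpha}p$ for small $p$. Forcing this moment to equal $M$ dictates $c=(M/p)^{1/(1+\alpha)}$ and produces a gap between the two means of $2cp\asymp M^{1/(1+\alpha)}p^{\alpha/(1+\alpha)}$. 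Exactly as in the proof of Theorem \ref{thm:infvar}, couple $P_+^{\otimes n}$ and $P_-^{\otimes n}$ so that $X_i=Y_i=0$ with probability $1-p$ and $(X_i,Y_i)=(c,-c)$ with probability $p$; then $\PROB\{X_1^n=Y_1^n\}=(1-p)^n\ge 2\delta$ provided $p$ is chosen of order $\log(2/\delta)/n$. On this shared event any estimator returns the same value for both samples and hence misses one of the means by at least $cp$, so a standard max-of-two-probabilities argument forces a failure probability of at least $\delta$. Substituting the chosen $p$ into $cp$ yields the advertised lower bound.

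The main obstacle is the moment bound for $Z_j-\mu$: variance additivity, which did all the work in Theorem \ref{thm:mom}, is no longer available, and one must invoke a non-trivial inequality such as von Bahr--Esseen (or an equivalent Marcinkiewicz--Zygmund bound for $p<2$) to obtain the correct $m^{-\alpha}$ decay. Beyond this, the only delicate piece of bookkeeping is balancing the two competing demands---good concentration of each $Z_j$ favors large blocks, whereas a sharp median step demands many blocks---and this trade-off is precisely what produces the exponent $\alpha/(1+\alpha)$ in the final rate.
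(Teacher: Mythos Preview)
Your proposal is correct and follows essentially the same route as the paper: the paper's sketch reduces the upper bound to showing that $\PROB\bigl(|m^{-1}\sum_{i=1}^m X_i-\mu|\ge\eta\bigr)\le 0.2$ whenever $\eta\ge c(\alpha)M^{1/(1+\alpha)}m^{-\alpha/(1+\alpha)}$ and then runs the binomial-tail argument of Theorem~\ref{thm:mom}, while the lower bound is explicitly said to go ``along the lines of Theorem~\ref{thm:infvar}''. Your use of the von~Bahr--Esseen inequality plus Markov is a clean way to supply the block-deviation bound that the paper states without proof, and your two-point construction for the lower bound is exactly the adaptation the paper has in mind.
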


The proof of the first part follows by showing that if $c(\alpha)$ is
an appropriate constant that depends only on $\alpha$ and
$$
\eta \geq c(\alpha) \left(\EXP |X_i-\mu|^{1+\alpha}\right)^{1/(1+\alpha)} \left(\frac{1}{m}\right)^{\alpha/(1+\alpha)},
$$
then
$$
\PROB\left( \left|\frac{1}{m}\sum_{i=1}^m X_i - \mu \right| \geq \eta\right) \leq 0.2~.
$$
The proof of the second statement goes along the lines of Theorem \ref{thm:infvar}.

We finish this section by showing that if the distribution of $X$ has
a finite moment of order $2+\alpha$ for some $\alpha>0$, then the
median-of-means estimator has a sub-Gaussian performance under a much
wider range of 
choices for the parameter $k$ that counts the number of blocks. The following bound is due to Minsker and
Strawn \cite{MiSt17}. For simplicity of the exposition, we only consider the case $\alpha=1$.

\begin{theorem}
\label{thm:MiSt}
Let $X_1,\ldots,X_n$ be independent, identically distributed random variables
with mean $\mu$, variance $\sigma^2$, and third central moment $\rho=\EXP|X-\mu|^3$.
Let $m,k$ be positive integers and assume that $n=mk$.
Assume that 
\begin{equation}
\label{eq:cond}
\sqrt{\frac{\log(2/\delta)}{2k}} + \frac{\rho}{2\sigma^3\sqrt{m}} \le
1/4~.
\end{equation}
 Then the median-of-means
estimator $\wh\mu_n$ with $k$ blocks satisfies that, with probability
at least $1-\delta$,
\[
\left|\wh\mu_n-\mu\right| \le  \frac{1}{c} \left( \sigma \sqrt{\frac{\log(2/\delta)}{2n}} +
  \frac{\rho k}{2\sigma^2 n} \right)~,
\]
where $c=\phi(\Phi^{-1}(3/4))$ is a constant. Here $\phi$ and
$\Phi$ denote the standard normal density and distribution functions.
\end{theorem}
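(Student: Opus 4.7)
The plan is to bound the upper tail $\PROB\{\wh\mu_n > \mu + t\}$ by reducing it to a binomial tail (just as in the proof of Theorem \ref{thm:mom}) and then combining the Berry--Esseen theorem with a careful linearization of $\Phi^{-1}$ near $1/2$ to extract the sharp constant $c=\phi(\Phi^{-1}(3/4))$.

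Let $F_m$ denote the distribution function of a block average $Z_j = \frac{1}{m}\sum_{i\in B_j} X_i$ and set $p_t = 1-F_m(\mu+t)$. The event $\wh\mu_n > \mu+t$ forces at least $k/2$ of the $Z_j$'s to exceed $\mu+t$, so Hoeffding's inequality gives, whenever $p_t<1/2$,
\[
\PROB\{\wh\mu_n > \mu + t\} \le \PROB\{\Bin(k,p_t) \ge k/2\} \le \exp\bigl(-2k(1/2-p_t)^2\bigr).
\]
To drive this below $\delta/2$ it suffices to choose $t$ so that $F_m(\mu+t) \ge 1/2 + \sqrt{\log(2/\delta)/(2k)}$.

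I would then invoke the classical Berry--Esseen theorem on the block average (using the constant $1/2$, which is slightly loose but admissible) to obtain
\[
F_m(\mu+t) \ge \Phi(u) - \frac{\rho}{2\sigma^3\sqrt{m}}, \qquad u = \frac{t\sqrt m}{\sigma}.
\]
The required inequality then reduces to $\Phi(u) \ge 1/2 + \eta$ with $\eta = \sqrt{\log(2/\delta)/(2k)}+\rho/(2\sigma^3\sqrt m)$. Assumption \eqref{eq:cond} is precisely $\eta\le 1/4$, so $1/2+\eta$ lies in $[1/2,3/4]$, the range on which a linearization of $\Phi^{-1}$ is available.

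The main obstacle---and the only place the constant $c$ enters---is the conversion of $\Phi(u)\ge 1/2+\eta$ into a clean upper bound for $u$. Since $(\Phi^{-1})'(x) = 1/\phi(\Phi^{-1}(x))$ is nondecreasing on $[1/2,1]$, the mean value theorem gives, for every $\eta\in[0,1/4]$,
\[
\Phi^{-1}(1/2+\eta) - \Phi^{-1}(1/2) \le \eta \cdot (\Phi^{-1})'(3/4) = \frac{\eta}{c}.
\]
Hence $u=\eta/c$ is admissible, and unwinding $t=u\sigma/\sqrt m$ with $mk=n$ recovers
\[
t = \frac{1}{c}\left(\sigma\sqrt{\frac{\log(2/\delta)}{2n}} + \frac{\rho k}{2\sigma^2 n}\right).
\]
The lower tail is treated identically by applying the argument to $-X_i$ (which has the same variance and absolute third central moment), and a union bound over the two tails gives total failure probability at most $\delta$.
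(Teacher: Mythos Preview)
Your proof is correct and is essentially the same argument as the paper's: both reduce the median event to a binomial tail controlled by Hoeffding, insert the Berry--Esseen bound for the block mean, and linearize the normal quantile on $[1/2,3/4]$ to extract the constant $c=\phi(\Phi^{-1}(3/4))$. The only cosmetic difference is that the paper linearizes $\Phi$ from below on $[0,\Phi^{-1}(3/4)]$ (using that $\phi$ is minimized there at $\Phi^{-1}(3/4)$), whereas you linearize $\Phi^{-1}$ from above via the mean value theorem; these are the same observation.
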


Observe that the first term on the right-hand side of the bound is of
the sub-Gaussian form. The second term is smaller than the first
whenever the number $k$ of blocks satisfies 
\[
    k\le \frac{2\sigma^3}{\rho}\sqrt{n\log(2/\delta)}~.
\]
In particular, $k\le \frac{2\sigma^3}{\rho}\sqrt{n}$ suffices to get a
sub-Gaussian performance. This is nice since with such a choice
the estimator does not depend on the value of the confidence parameter
$\delta$ and the estimator is sub-Gaussian simultaneously for the
entire range of values of $\delta$ permitted by the condition
(\ref{eq:cond}).
 Also, note that the number of blocks may be chosen to be
much larger than the choice suggested by Theorem \ref{thm:mom}. 
In particular, $k$ can be as large as a constant multiple of
$\sqrt{n}$. In that case the median-of-means estimator is
sub-Gaussian simultaneously for all $\delta \ge e^{-c_0\sqrt{n}}$ for an
appropriate constant $c_0$.
The price to pay is the extra assumption of the existence of the third
moment. Minsker and Strawn \cite{MiSt17} also prove that, when
$k=o(\sqrt{n})$,
then, under the assumptions of Theorem \ref{thm:MiSt},
$\sqrt{n}\left(\wh\mu_n-\mu\right)$ is asymptotically normal with mean
zero and variance $\sigma^2\pi/2$.

\begin{proof}
Note that $\wh\mu_n \in [\mu-a,\mu+a]$ if $a>0$ is such that
\[
    \frac{1}{k} \sum_{j=1}^k \IND_{Z_j-\mu \le a} \ge \frac{1}{2}
    \quad \text{and} \quad 
    \frac{1}{k} \sum_{j=1}^k \IND_{Z_j-\mu \ge -a} \ge \frac{1}{2}~.
\]
We show that, with probability at least $1-\delta$, one may take
\[
a=\frac{1}{c} \left( \sigma \sqrt{\frac{\log(2/\delta)}{2n}} +
  \frac{\rho k}{2\sigma^2 n} \right)~.
\]
To this end, note that
\begin{eqnarray*}
    \frac{1}{k} \sum_{j=1}^k \IND_{Z_j-\mu \le a}
& =&
  \frac{1}{k} \sum_{j=1}^k \left( \IND_{Z_j-\mu \le a}   -
     \PROB\left\{ Z_j-\mu \le a \right\} \right) \\
& & +  \left( \PROB\left\{ Z_1-\mu \le a \right\} 
          - \PROB\left\{ G \frac{\sigma}{\sqrt{m}} \le a \right\} \right) \\
& & + \PROB\left\{ G \frac{\sigma}{\sqrt{m}} \le a \right\} 
\\
& & \text{(where $G$ is a standard normal random variable).}
\end{eqnarray*}
First note that, by Hoeffding's inequality, with probability at least $1-\delta/2$,
\[
    \frac{1}{k} \sum_{j=1}^k \left( \IND_{Z_j-\mu \le a}   -
     \PROB\left\{ Z_j-\mu \le a \right\} \right)  \ge - \sqrt{\frac{\log(2/\delta)}{2k}}~.
\]
For the second term on the right-hand side, we may use the
Berry-Esseen theorem (see  Shevtsova \cite{She14}) that implies that
\[
 \PROB\left\{ Z_1-\mu \le a \right\} 
          - \PROB\left\{ G \frac{\sigma}{\sqrt{m}} \le a \right\}  \ge
          - \frac{\rho}{2\sigma^3\sqrt{m}}~.
\]
Hence, we have that, with probability at least  $1-\delta/2$,
\[
    \frac{1}{k} \sum_{j=1}^k \IND_{Z_j-\mu \le a} \ge \PROB\left\{ G
      \frac{\sigma}{\sqrt{m}} \le a \right\}-
    \sqrt{\frac{\log(2/\delta)}{2k}}
- \frac{\rho}{2\sigma^3\sqrt{m}}~.
\]
Thus,  $(1/k)\sum_{j=1}^k \IND_{Z_j-\mu \le a} \ge \frac{1}{2}$ with probability at least  $1-\delta/2$, whenever
$a$ is such that 
\[
   \PROB\left\{ G
      \le a \frac{\sqrt{m}}{\sigma}  \right\} \ge \frac{1}{2} +
    \sqrt{\frac{\log(2/\delta)}{2k}} + \frac{\rho}{2\sigma^3\sqrt{m}}~.
\]
If $\sqrt{\frac{\log(2/\delta)}{2k}} + \frac{\rho}{2\sigma^3\sqrt{m}}
\le 1/4$ then it suffices to consider values of $a$ with
 $a\sqrt{m}/\sigma \le \Phi^{-1}(3/4)$. Then 
\[
   \PROB\left\{ G
      \le a \frac{\sqrt{m}}{\sigma}  \right\} \ge \frac{1}{2} +
c \frac{a\sqrt{m}}{\sigma}
\]
with $c=\phi(\Phi^{-1}(3/4))$.
Hence, we may take 
\[
    a= \frac{\sigma}{c\sqrt{m}} \left(
      \sqrt{\frac{\log(2/\delta)}{2k}} +
      \frac{\rho}{2\sigma^3\sqrt{m}} \right) 
= \frac{1}{c} \left( \sigma \sqrt{\frac{\log(2/\delta)}{2n}} +
  \frac{\rho k}{2\sigma^2 n} \right)~.
\]
The same argument shows that, with probability at least $1-\delta/2$,
\[
   \frac{1}{k} \sum_{j=1}^k \IND_{Z_j-\mu \ge -a} \ge \frac{1}{2}
\]
for the choice of $a$ above.
\end{proof}

\subsection{Catoni's estimator}

Next we present a completely different approach for constructing a mean estimator, introduced and analyzed by Catoni \cite{Cat10}.
To introduce Catoni's idea, note first that the empirical mean $\ol\mu_n$ is just the solution
$y\in \R$ of the equation
\[
     \sum_{i=1}^n \left( X_i-y\right) = 0~.
\]
Catoni proposed to replace the left-hand side of the equation above by another strictly
decreasing function of $y$ of the form
\[
    R_{n,\alpha}(y) = \sum_{i=1}^n \psi\left(\alpha( X_i-y)\right)~,
\]
where $\psi:\R\to \R$ is an antisymmetric increasing function and $\alpha\in\R$ is a parameter.
The idea is that if $\psi(x)$ increases much slower than $x$, then the effect of ``outliers''
present due to heavy tails is diminished. Catoni offers a whole range of ``influence'' functions $\psi$.
For the ease of exposition, we single out one specific choice, namely
\[
    \psi(x) = \left\{ \begin{array}{ll}
                              \log(1+x+x^2/2)& \text{if} \ x\ge 0 \\
                              -\log(1-x+x^2/2) & \text{if} \ x < 0~.
                             \end{array}  \right.
\]
We define Catoni's mean estimator $\wh\mu_{\alpha,n}$ as the unique value $y$ such that $R_{n,\alpha}(y)=0$
with this choice of $\psi$.  Since $\psi(x)\le \log(1+x+x^2/2)$ for all $x\in \R$, we have, for all $y\in \R$,
\begin{eqnarray*}
\EXP \left[ e^{R_{n,\alpha}(y)} \right] & \le & \left( \EXP\left[ 1+ \alpha (X-y) + \frac{\alpha^2 (X-y)^2}{2} \right]\right)^n\\
& = & \left( 1 + \alpha(\mu-y) + \frac{\alpha^2 \left(\sigma^2+ (\mu-y)^2\right)}{2} \right)^n \\
& \le & \exp\left( n\alpha(\mu-y) + \frac{n\alpha^2 \left(\sigma^2+ (\mu-y)^2\right)}{2}\right)~,
\end{eqnarray*}
whenever the $X_i$ have a finite variance $\sigma^2$. Thus, by Markov's inequality, we have that, for any fixed $y\in \R$
and $\delta \in (0,1)$,
\[
   \PROB\left\{ R_{n,\alpha}(y) \ge n\alpha(\mu-y) + \frac{n\alpha^2 \left(\sigma^2+ (\mu-y)^2\right)}{2} + \log(1/\delta)\right\}
     \le \delta~.
\]
Suppose that the parameter $\alpha$ is such that $\alpha^2\sigma^2 + 2\log(1/\delta)/n \le 1$.
Then the quadratic polynomial of $y$ 
\[
n\alpha(\mu-y) + \frac{n\alpha^2 \left(\sigma^2+ (\mu-y)^2\right)}{2}
+ \log(1/\delta)
\]
has at least one root.
In particular, taking the smaller root
\[
    y_+ = \mu + \frac{\frac{\alpha\sigma^2}{2}  + \frac{\log(1/\delta)}{n\alpha}}{\frac{1}{2}+ \frac{1}{2}\sqrt{1-\alpha^2\sigma^2 - \frac{2\log(1/\delta)}{n}}}~,
\]
we have that $R_{n,\alpha} (y_+) < 0$ with probability at least $1-\delta$.
Since $R_{n,\alpha}(y)$ is strictly decreasing, this implies that $\wh\mu_{\alpha,n} < y_+$ with probability at least $1-\delta$.
A symmetric argument shows that $\wh\mu_{\alpha,n} > y_-$ with probability at least $1-\delta$,
where
\[
    y_- = \mu - \frac{\frac{\alpha\sigma^2}{2}  + \frac{\log(1/\delta)}{n\alpha}}{\frac{1}{2}+ \frac{1}{2}\sqrt{1-\alpha^2\sigma^2 - \frac{2\log(1/\delta)}{n}}}~.
\]
Now by straightforward bounding, and choosing the parameter $\alpha$ to optimize the bounds, we obtain
the following performance estimate.

\begin{theorem}
\label{thm:catoni}
Let $X_1,\ldots,X_n$ be independent, identically distributed random variables
with mean $\mu$ and variance $\sigma^2$. Let $\delta\in (0,1)$ be such that $n> 2\log(1/\delta)$.
Catoni's mean estimator $\wh\mu_{n,\alpha}$ with parameter
\[
     \alpha= \sqrt{\frac{2\log(1/\delta)}{n\sigma^2\left(1+\frac{2\log(1/\delta)}{n-2\log(1/\delta)}\right)}}
\]
satisfies that, with probability at least $1-2\delta$,
\begin{equation}
\label{eq:catonibound}
     \left| \wh\mu_{n,\alpha} - \mu \right| < \sqrt{\frac{2\sigma^2 \log(1/\delta)}{n-2\log(1/\delta)}}~.
\end{equation}
\end{theorem}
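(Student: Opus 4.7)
The plan is to take the probabilistic bounds derived immediately before the theorem statement as given and finish with a union bound and an algebraic simplification. The excerpt shows that for any fixed $\alpha$ satisfying $\alpha^2\sigma^2 + 2\log(1/\delta)/n \le 1$, one has $y_- < \wh\mu_{n,\alpha} < y_+$ on an event of probability at least $1-2\delta$, where
\[
y_\pm = \mu \pm \frac{\frac{\alpha\sigma^2}{2} + \frac{\log(1/\delta)}{n\alpha}}{\frac{1}{2} + \frac{1}{2}\sqrt{1 - \alpha^2\sigma^2 - \frac{2\log(1/\delta)}{n}}}.
\]
So the remaining work is to plug in the prescribed $\alpha$, verify the feasibility constraint, and evaluate the right-hand side.

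Write $t = \log(1/\delta)$ for brevity. First I would simplify $1 + \frac{2t}{n-2t} = \frac{n}{n-2t}$, which gives $\alpha^2\sigma^2 = \frac{2t(n-2t)}{n^2}$. A short computation then yields $\alpha^2\sigma^2 + \frac{2t}{n} = \frac{4t(n-t)}{n^2}$, and hence
\[
1-\alpha^2\sigma^2-\frac{2t}{n} = \frac{(n-2t)^2}{n^2}.
\]
This is nonnegative precisely because $n > 2t$ (the hypothesis of the theorem), so the feasibility constraint holds and the square root equals $(n-2t)/n$. Consequently the denominator of $(y_+ - \mu)$ collapses to $(n-t)/n$. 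For the numerator, $\frac{\alpha\sigma^2}{2} + \frac{t}{n\alpha}$ factors as
\[
\frac{\sigma\sqrt{t}}{\sqrt{2}} \left( \frac{\sqrt{n-2t}}{n} + \frac{1}{\sqrt{n-2t}} \right) = \frac{\sqrt{2}\,\sigma\sqrt{t}\,(n-t)}{n\sqrt{n-2t}}.
\]
Dividing by $(n-t)/n$ cancels the $(n-t)$ and $n$ factors and leaves $\sqrt{2\sigma^2 t/(n-2t)}$, which is exactly the claimed bound. The identical argument for $y_-$ gives the matching lower bound on $\wh\mu_{n,\alpha}$, and a union bound over the two tail events produces the $1-2\delta$ probability.

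The proof is essentially mechanical, so the only real obstacle is keeping the algebra clean. The specific choice of $\alpha$ is engineered precisely so that $1 - \alpha^2\sigma^2 - 2t/n$ becomes the perfect square $((n-2t)/n)^2$, collapsing the awkward square root in $y_\pm$ and allowing every factor to cancel. Conceptually, $\alpha$ is a mild perturbation (by the factor $n/(n-2t)$) of the unconstrained minimizer $\sqrt{2t/(n\sigma^2)}$ of the AM-GM-type expression $\frac{\alpha\sigma^2}{2} + \frac{t}{n\alpha}$; the perturbation is exactly what is needed to make both the feasibility constraint and the final simplification coincide.
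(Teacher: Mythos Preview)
Your proposal is correct and follows exactly the approach the paper indicates: the paper derives the bounds $y_-<\wh\mu_{n,\alpha}<y_+$ each with probability at least $1-\delta$ and then says ``by straightforward bounding, and choosing the parameter $\alpha$ to optimize the bounds,'' leaving the algebra to the reader. You have carried out that algebra in full, and your observation that the prescribed $\alpha$ makes $1-\alpha^2\sigma^2-2t/n=((n-2t)/n)^2$ a perfect square---so that $y_+-\mu$ equals the claimed bound \emph{exactly} rather than merely being bounded by it---is a clean way to see why this particular $\alpha$ is the right choice.
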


The theorem highlights that, with an appropriately chosen parameter $\alpha$, Catoni's estimator has
a sub-Gaussian performance. Quite remarkably, the constant $\sqrt{2}$ is the best possible. A disadvantage
of Catoni's estimator with respect to median-of-means is that the
estimator---at least in the form given in the theorem---depends on the variance $\sigma^2$. In general, it is unrealistic to assume knowledge of $\sigma^2$.
If one substitutes $\sigma^2$ in the formula of $\alpha$ by an upper bound $v$, then the bound (\ref{eq:catonibound})
still holds with $v$ replacing $\sigma^2$. In case no good upper bound for $\sigma^2$ is available,
Catoni \cite{Cat10} shows how to use Lepski's method to select $\alpha$ from the data that has near-optimal performance.
Huber \cite{Hub19} combines the median-of-means estimator with Catoni's estimator into a two-step
procedure that to obtain an estimator with the optimal leading constant in the sub-Gaussian bound when
$|\sigma/\mu|$ is bounded by a known constant.

Another problem---shared with the median-of-means estimator---is that Catoni's estimator also depends
on the required confidence level $\delta$. Such a dependence is
necessary as it is shown in Section \ref{sec:multidelta} below.
A quick fix is to use the
estimator with a $\delta$-independent parameter, 
though then the resulting estimate, naturally, cannot be sub-Gaussian.
One reasonable choice is $\alpha=\sqrt{2/(n\sigma^2)}$. In this case, it is easy to see that, whenever $n>2(1+\log(1/\delta))$,
Catoni's estimator satisfies, with probability at least $1-2\delta$,
\[
    \left| \wh\mu_{n,\alpha} - \mu \right| < \sqrt{\frac{\sigma^2}{2n}} \cdot \frac{1+ \log(1/\delta)}{1-\frac{1+ \log(1/\delta)}{n}}~.
\]
This is not a sub-Gaussian bound because of an extra factor of $\sqrt{\log(1/\delta)}$ but the ``sub-exponential'' tail
probabilities are still non-trivial and useful.

\subsection{Trimmed mean}

Perhaps the most natural attempt to improve the performance of the empirical mean is 
removing possible outliers using a truncation of $X$. Indeed, the
so-called \emph{trimmed-mean} (or \emph{truncated-}mean) estimator
is defined by removing a fraction of the sample,
consisting of the $\epsilon n$ largest and smallest points for some
parameter $\epsilon \in (0,1)$, and then averaging over the rest. 
This idea is one of the most classical tools in robust statistics and
we refer to  Tukey and McLaughlin \cite{TuMc63}, Huber and Ronchetti
\cite{HuRo09}, Bickel \cite{Bic65}, Stigler \cite{Sti73} for early
work on the theoretical properties of the trimmed-mean estimator.
However, it was only recently that the non-asymptotic sub-Gaussian 
property of the trimmed mean was established. Indeed, Oliveira and
Orenstein \cite{OlOr19} proved that if $\epsilon$ is chosen
proportionally to $\log(1/\delta)/n$, then 
the trimmed-mean estimator has a sub-Gaussian performance for all distributions with a
finite variance.

To show how this works in the simplest way, here we analyze a simple
variant of the trimmed-mean estimator.

The estimator splits the data in two
equal parts. One half is used to determine the correct
truncation level. The points from the other half are
averaged, except for the data points
that fall outside of the truncation region, which are ignored.
For convenience of the notation, we assume that the 
data consists of $2n$ independent copies of the random variable $X$,
denoted by  $X_1,\ldots,X_n,Y_1,\ldots,Y_n$. 

For $\alpha \leq \beta$, define the truncation function
\[
\phi_{\alpha,\beta}(x) =
\begin{cases}
\beta & \mbox{if} \  x > \beta,
\\
x & \mbox{if} \ x \in [\alpha,\beta]~,
\\
\alpha & \mbox{if} \ x < \alpha~,
\end{cases}
\]
and for $x_1,\ldots, x_m \in \R$ let $x_1^* \leq x_2^* \leq \cdots \leq  x_m^*$ be its non-decreasing rearrangement.

With this notation in place, the definition of the estimator is as follows:

\begin{description}
\item{$(1)$} Given the confidence level $\delta\ge 8e^{-3n/16}$, set
$$
\eps=\frac{16\log(8/\delta)}{3n}~.
$$
\item{$(2)$} Let $\alpha=Y_{\eps n}^*$ and $\beta=Y_{(1-\eps) n}^*$
  (assuming, for simplicity, that $\eps n$ is an integer) and set
$$
\wh{\mu}_{2n} =\frac{1}{n}\sum_{i=1}^n \phi_{\alpha,\beta}(X_i)~.
$$
\end{description}

\begin{theorem} 
\label{thm:trimmed-mean}
Let $X_1,\ldots,X_n,Y_1,\ldots,Y_n$ be independent, identically distributed random variables
with mean $\mu$ and variance $\sigma^2$. Let $\delta\in (0,1)$ be such that $n> (16/3)\log(8/\delta)$.
Then, with probability at least $1-\delta$,
$$
|\wh{\mu}_{2n}-\mu | \leq 9 \sigma \sqrt{\frac{\log(8/\delta)}{n}}~.
$$
\end{theorem}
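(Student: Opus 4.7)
The plan is to exploit the sample-splitting structure of the estimator: the truncation thresholds $\alpha,\beta$ depend only on $Y_1,\ldots,Y_n$, so conditionally on those the truncated variables $\phi_{\alpha,\beta}(X_i)$ are i.i.d. Combining this with the pointwise monotonicity of $\phi_{\alpha,\beta}(x)=\min(\max(x,\alpha),\beta)$ in both $\alpha$ and $\beta$, I can sandwich the random truncation between two deterministic ones. By translation equivariance of $\wh{\mu}_{2n}$ I assume throughout that $\mu=0$, so $\EXP X^2=\sigma^2$.

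The first step is to localise the empirical quantiles. Let $F$ denote the distribution function of $X$ and set $q_-=F^{-1}(\eps/2)$, $Q_L=F^{-1}(2\eps)$, $Q_R=F^{-1}(1-2\eps)$, $q_+=F^{-1}(1-\eps/2)$. A multiplicative Chernoff bound applied to each of the binomial counts $\sum_i\IND_{Y_i\ls\cdot}$ shows that with the prescribed $\eps=16\log(8/\delta)/(3n)$ each one-sided inequality has failure probability at most $\delta/8$, so a union bound yields
\[
q_-\ls \alpha\ls Q_L \quad\text{and}\quad Q_R\ls \beta\ls q_+
\]
with probability at least $1-\delta/2$. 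Because $\phi_{\alpha,\beta}$ is non-decreasing in both $\alpha$ and $\beta$, on this event $\phi_{q_-,Q_R}(x)\ls \phi_{\alpha,\beta}(x)\ls \phi_{Q_L,q_+}(x)$ for every $x\in\R$, so that $\wh{\mu}_{2n}$ lies between the two deterministic-threshold averages $\frac{1}{n}\sum_i \phi_{q_-,Q_R}(X_i)$ and $\frac{1}{n}\sum_i \phi_{Q_L,q_+}(X_i)$, both of which are i.i.d.\ sums of bounded variables independent of $(Y_i)_{i=1}^n$.

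It then suffices to bound the bias and the fluctuation of each of these two sandwiching averages separately. For $(\alpha',\beta')\in\{(q_-,Q_R),(Q_L,q_+)\}$, the identity $\phi_{\alpha',\beta'}(X)-X=(\alpha'-X)_+-(X-\beta')_+$ gives
\[
|\EXP\phi_{\alpha',\beta'}(X)|\ls \EXP(X-\beta')_++\EXP(\alpha'-X)_+~.
\]
Writing $\EXP(X-\beta')_+=\EXP[X\IND_{X>\beta'}]-\beta'\PROB(X>\beta')$ and estimating $|\EXP[X\IND_{X>\beta'}]|\ls \sigma\sqrt{\PROB(X>\beta')}$ by Cauchy--Schwarz, together with the Chebyshev-type bound $|\beta'|\PROB(X>\beta')=O(\sigma\sqrt{\PROB(X>\beta')})$, yields $\EXP(X-\beta')_+=O(\sigma\sqrt{\eps})$, and the analogous argument handles $\EXP(\alpha'-X)_+$. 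Hence the bias is of order $\sigma\sqrt{\eps}=O(\sigma\sqrt{\log(8/\delta)/n})$. For the fluctuation, $|\phi_{\alpha',\beta'}|\ls \max(|\alpha'|,|\beta'|)=O(\sigma/\sqrt{\eps})$ while $\var(\phi_{\alpha',\beta'}(X))=O(\sigma^2)$ (since $\phi^2=X^2$ on $[\alpha',\beta']$ and $\max(\alpha'^2,\beta'^2)\cdot 2\eps=O(\sigma^2)$ on the two tails), so Bernstein's inequality produces a deviation of order $\sigma\sqrt{\log(8/\delta)/n}$ with probability at least $1-\delta/4$ for each sandwich, the linear-in-$1/n$ Bernstein term being of the same order once $\eps$ is substituted. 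A union bound over the four failure events (total probability at most $\delta/2+\delta/4+\delta/4=\delta$) and a careful tally of absolute constants then deliver the claimed estimate with leading factor $9$.

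The main delicacy lies in the bias estimate when the distribution is heavily skewed and one of the quantiles $q_\pm,Q_L,Q_R$ lies on the same side of $\mu=0$ as the opposite tail; in that regime the factor $|\beta'|\PROB(X>\beta')$ cannot simply be controlled by the worst-case estimate $|\beta'|\ls \sigma\sqrt{2/\eps}$, and one must combine Chebyshev's inequality applied at the median with the fact that $\PROB(X>\beta')$ is either $\ls 2\eps$ or very close to $1$ to see that in either case the product stays at the desired $\sigma\sqrt{\eps}$ scale. Once this short case analysis is in place, everything else---Chernoff for the quantile localisation, the monotonicity sandwich for $\phi$, and Bernstein for the deterministic-truncation averages---is routine.
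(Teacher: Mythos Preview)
Your proof is correct and follows essentially the same path as the paper: localize the empirical quantiles via a Chernoff/Bernstein bound, control the truncation bias with Cauchy--Schwarz plus Chebyshev, and handle the fluctuation by Bernstein's inequality. The one twist is your monotonicity sandwich $\phi_{q_-,Q_R}\le\phi_{\alpha,\beta}\le\phi_{Q_L,q_+}$, which reduces to two deterministic-threshold averages rather than the paper's direct conditional-on-$Y$ Bernstein argument on the event $E$; this is a minor and perfectly valid variant of the same idea (and your ``delicacy'' about skewed quantiles is resolved exactly as in the paper, since Chebyshev in fact bounds $|\beta'-\mu|$ regardless of sign).
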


\begin{proof}
We start by showing that the truncation level is close to the
appropriate quantiles of the distribution. To this end, for $p\in
(0,1)$, introduce the quantiles
\[
Q_p = \sup \left\{M \in \R : \PROB\left\{X \geq M\right\} \geq 1-p\right\}~.
\]
For ease of exposition, assume that $X$ has a nonatomic distribution. 
(This assumption is not necessary, but simplifies notation.)
In that case $\PROB\{X > Q_p\} = \PROB\{X \ge Q_p\} = 1-p$.

By a straightforward application of Bernstein's inequality, with
probability at least $1-2\exp(-(3/16)\eps n)$,
we have both
$$
\left|\{i\in [n]: Y_i \geq Q_{1-2\eps}\}\right| \geq \eps n
$$
and
$$
\left|\{i\in [n]: Y_i \leq Q_{1-\eps/2}\}\right| \geq (1-\eps) n~.
$$
This implies that, with probability at least $1-2\exp(-(3/16)\eps n)$,
\begin{equation} 
\label{eq:beta-single}
Q_{1-2\eps} \leq Y_{(1-\eps) n}^* \leq  Q_{1-\eps/2}~.
\end{equation}
By the same argument, with probability at least $1-2\exp(-(3/16)\eps n)$,
\begin{equation} \label{eq:alpha-single}
Q_{\eps/2} \leq Y_{\eps n}^* \leq Q_{2\eps}~,
\end{equation}
From here, we simply need to show that
$|\E \phi_{\alpha,\beta}(X)- \mu|$ is small and that 
 $(1/n)\sum_{i=1}^n \phi_{\alpha,\beta}(X_i)$ concentrates around its mean.

For the first step, consider the event $E$ that both \eqref{eq:beta-single} and \eqref{eq:alpha-single}
hold. This event has probability at least $1-4\exp(-(3/16)\eps n) =1-\delta/2$.
On the event $E$,
\begin{eqnarray*}
\lefteqn{
 \left| \E
  \left[\phi_{\alpha,\beta}(X) |Y_1,\ldots,Y_n\right]- \mu
  \right|} \\
& \le & \left|\E \left[(X-\alpha)\IND_{X\leq
  \alpha}|Y_1,\ldots,Y_n\right] \right|+
 \left|\E \left[(X-\beta)\IND_{X\geq
  \beta}|Y_1,\ldots,Y_n\right] \right|
\\
& \leq & |\E (X-Q_{2\eps})\IND_{X\leq Q_{2\eps}}|+ 
|\E  (X-Q_{1-2\eps})\IND_{X\geq Q_{1-2\eps}}|~.
\end{eqnarray*}
To bound these two terms, forst notice that, by Chebyshev's inequality,
$$
2\eps = \PROB\left\{ X \geq Q_{1-2\eps}\right\} \leq \frac{\sigma_X^2}{(Q_{1-2\eps}-\mu)^2}~,
$$
and in particular,
$$
Q_{1-2\eps} \leq \mu + \frac{\sigma}{\sqrt{2\eps}}~.
$$
Hence, by the Cauchy-Schwarz inequality,
\begin{eqnarray*}
|\E  (X-Q_{1-2\eps})\IND_{X\geq Q_{1-2\eps}}| 
& = &
|\E (X-\mu)-(Q_{1-2\eps}-\mu)) \IND_{X \geq Q_{1-2\eps}}| \\
& \leq  &
\E |(X-\mu)| \IND_{X \geq Q_{1-2\eps}} + (Q_{1-2\eps}-\mu)\PROB\{X \geq
          Q_{1-2\eps}\} \\
& \leq &  
\sigma \sqrt{\PROB\left\{X \geq Q_{1-2\eps}\right\}} +  2\eps (Q_{1-2\eps}-\mu) \\
& \leq & \sigma\sqrt{8\eps}~.
\end{eqnarray*}
A symmetric argument shows $|\E (X-Q_{2\eps})\IND_{X\leq Q_{2\eps}}|
\le \sigma\sqrt{8\eps}$, and therefore, on the event $E$, we have
\[
 \left| \E
  \left[\phi_{\alpha,\beta}(X) |Y_1,\ldots,Y_n\right]- \mu
  \right| \le \sigma\sqrt{32\eps} \le  6\sigma \sqrt{\frac{\log(8/\delta)}{n}}
\]
by our choice of $\epsilon$. 
Next, let
$$
Z=\frac{1}{n} \sum_{i=1}^n \phi_{\alpha,\beta}(X_i)-\E\left[ \phi_{\alpha,\beta}(X) |Y_1,\ldots,Y_n\right]
$$
and observe that
$$
Z= \frac{1}{n} \sum_{i=1}^n \phi_{\alpha-\mu,\beta-\mu}(X_i-\mu)-\E\left[ \phi_{\alpha-\mu,\beta-\mu}(X-\mu) |Y_1,\ldots,Y_n\right]~.
$$
Hence, on the event $E$ (that only depends on $Y_1,\ldots,Y_n$),  
$Z$ is an average of  centered random variables that is bounded point-wise 
by $M=\max \{|Q_{\eps/2}-\mu|,|Q_{1-\eps/2}-\mu|\} \le
\sigma\sqrt{2/\eps}$ and whose
variance is at most $\sigma^2$. Therefore, by Bernstein's inequality, with probability at least $1-\delta/2$,
$$
Z \leq \sigma \sqrt{\frac{2\log(2/\delta)}{n}}
+\frac{\log(2/\delta)\sigma\sqrt{2/\eps}}{n}
\le 3\sigma \sqrt{\frac{\log(2/\delta)}{n}}
~.
$$
Putting the pieces together, we obtain
the announced bound.
\end{proof}

Besides its conceptual simplicity, an important advantage of the trimmed mean compared to other
estimators with sub-Gaussian performance is that it is robust to
adversarial contamination of the data. This statement is formalized
and proved in \cite{LuMe19a} where a multivariate extension is also
introduced and analyzed.

\subsection{Multiple-$\delta$ estimators}
\label{sec:multidelta}

We have constructed various estimators--such as median-of-means and
Catoni's estimator--that are sub-Gaussian under the
only assumption that the underlying distribution has a finite second
moment.  However, both estimators depend on the knowledge of the 
desired confidence parameter $\delta$. We show next that is not a
coincidence because 
without
further information on the distribution, it is impossible to construct a single estimator that is sub-Gaussian
for a nontrivial range of values of the confidence parameter $\delta$.
Next we reproduce a simplified version of an argument of Devroye, Lerasle, Lugosi,
and Oliveira \cite{DeLeLuOl16} who proved results of this kind. The theorem below shows that
it is impossible to construct an estimator that is $L$-sub-Gaussian for some specified values of
$\delta_1$ and $\delta_2$, at the same time. The particular values of
$\delta_1$ and $\delta_2$ are of
no special importance. We present this result to show the basic ideas in a simple form. For more
general versions we refer to \cite{DeLeLuOl16}.

\begin{theorem}
\label{thm:multidelta}
For every $L\ge 50$ and for every sample size $n$, no
estimator can be simultaneously $L$-sub-Gaussian for both
$\delta_1=1/(2e\sqrt{L^3+1})$ and $\delta_2=2e^{-L^4/4}$ for
all distributions with finite second moment. 
\end{theorem}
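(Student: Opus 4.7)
The plan is a Le~Cam-style two-point argument, patterned on the proof of Theorem~\ref{thm:infvar}. Suppose, towards a contradiction, that $\hat\mu_n$ is simultaneously $L$-sub-Gaussian at $\delta_1$ and at $\delta_2$ for every distribution of finite variance. The first observation is that applying the $\delta_1$-sub-Gaussian bound to the degenerate distribution $P_0 = \delta_0$ (which has variance $0$) forces $\hat\mu_n(0,\ldots,0) = 0$: the inequality $|\hat\mu_n - 0| \leq L\cdot 0\cdot\sqrt{\log(2/\delta_1)/n} = 0$ must hold with probability at least $1-\delta_1 > 0$ on the deterministic sample $(0,\ldots,0)$, so any other value would contradict sub-Gaussianity.

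Next, I would consider the perturbation $Q = (1-p)\delta_0 + p\delta_K$ with mean $\mu_Q = pK$ and variance $\sigma_Q^2 = p(1-p)K^2$, and examine the $\delta_j$-sub-Gaussian bound on $Q$ for $j \in \{1,2\}$ on the all-zeros event $E$, whose probability under $Q$ equals $(1-p)^n$. By the previous step $\hat\mu_n = 0$ on $E$, so the error is exactly $pK$. The $\delta_j$-sub-Gaussian hypothesis then leaves only two possibilities: either the deterministic bound is satisfied on $E$, $pK \leq L\sigma_Q\sqrt{\log(2/\delta_j)/n}$, which simplifies to $p/(1-p) \leq L^2\log(2/\delta_j)/n$; or the entire event $E$ is absorbed into the $\delta_j$-bad set, $(1-p)^n \leq \delta_j$. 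A violation of sub-Gaussianity at $\delta_j$ on $Q$ would then occur precisely when $p$ lies in the gap
\[
\left( \frac{L^2\log(2/\delta_j)}{n + L^2\log(2/\delta_j)},\; 1-\delta_j^{1/n} \right),
\]
delivering the required contradiction.

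The main obstacle will be to verify that such a $p$ exists for every $n \geq 1$. After simplification, non-emptiness of the gap for the index $j$ reduces to the inequality $n(e^{\log(1/\delta_j)/n} - 1) > L^2\log(2/\delta_j)$. The extreme smallness of $\delta_2 = 2e^{-L^4/4}$ makes this easy to satisfy for $j=2$ over a broad range of $n$; for the complementary large-$n$ regime I would refine the construction using the symmetric pair $Q_\pm = (1-p)\delta_0 + p\delta_{\pm K}$ as in Theorem~\ref{thm:infvar}, applying the $\delta_1$-bound under $Q_+$ and the $\delta_2$-bound under $Q_-$ to pin down $\hat\mu_n(0,\ldots,0)$ in two narrow windows whose centers differ by $2pK$ and whose widths are governed by the two sub-Gaussian radii. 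The particular numerical values $\delta_1 = 1/(2e\sqrt{L^3+1})$, $\delta_2 = 2e^{-L^4/4}$, and the assumption $L \geq 50$ are exactly calibrated so that the two analyses dovetail and jointly exhaust all sample sizes $n$.
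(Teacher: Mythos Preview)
Your two-point construction has a genuine gap in the large-$n$ regime, and the symmetric-pair ``refinement'' does not close it. Trace through the arithmetic: to pin down $\wh\mu_n(0,\ldots,0)$ via the $\delta_1$-bound under $Q_+$ you need the all-zeros event to have probability exceeding $\delta_1$, which forces $p \lesssim \log(1/\delta_1)/n \approx (1.5\log L)/n$. On the other hand, for the two windows (centered at $\pm pK$, widths $r_1,r_2$) to be disjoint you need $2pK > r_1+r_2$, and since $r_2 = L\sigma_Q\sqrt{\log(2/\delta_2)/n} = (L^3/2)\sqrt{p(1-p)}\,K/\sqrt{n}$ dominates, this requires $p/(1-p) \gtrsim L^6/n$. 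For $L\ge 50$ these two constraints on $p$ are incompatible once $n$ is moderately large; there is no value of $p$ in $(0,1)$ satisfying both. The same obstruction kills the single-$j$ gap analysis: the condition $n(e^{\log(1/\delta_j)/n}-1) > L^2\log(2/\delta_j)$ fails for large $n$ because the left side is asymptotic to $\log(1/\delta_j)$ while the right side carries an extra factor of $L^2$. In short, the all-zeros event is too improbable under $Q$ to carry the argument when $n$ is large, and no juggling of $\delta_1$ versus $\delta_2$ repairs this.

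The paper's proof takes a genuinely different route that sidesteps this obstacle entirely. It works with two Poisson families, parameters $1/n$ and $c/n$ with $c=L^3+1$, and exploits the fact that the conditional law of $(X_1,\ldots,X_n)$ given $\sum X_i = c$ is the \emph{same} for both parameters. This lets one transport a $\delta_1$-sub-Gaussian constraint established under the high-variance Poisson($c/n$) to a probability lower bound under the low-variance Poisson($1/n$), at the cost of a factor $e\,c!$ coming from the ratio of the point masses $\PROB\{\sum=c\}$. The resulting contradiction, $1/2 \le 2e\,c!\,e^{-L^4/4}$, involves only $L$ and is independent of $n$. Your two-point family has no analogue of this sufficiency-of-the-sum property, which is why the argument stalls.
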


\begin{proof}
We show that not only it is impossible to construct a single
$L$-sub-Gaussian estimator for both
$\delta_1=1/(2e\sqrt{L^3+1})$ and $\delta_2=e^{-L^4/4}$ for
all distributions with finite second moment but it is also the case
for the restricted class of Poisson distributions. 

Assume, on the contrary, that there exists an estimator $\wh{\mu}_n$
that is $L$-sub-Gaussian for both $\delta_1$ and $\delta_2$ for all
Poisson distributions. Let $X_1,\ldots,X_n$ be independent
Poisson random variables with parameter $1/n$ and let $Y_1,\ldots,Y_n$
be independent Poisson random variables with parameter $c/n$, where we
set $c=L^3+1$. We assume, for the sake of simplicity, that $c$ is an integer.
 By the sub-Gaussian property of $\wh{\mu}_n$,
\begin{equation}
\label{eq:Poi1}
     \PROB\left\{ \wh{\mu}_n(Y_1,\ldots,Y_n) < \frac{c}{n} -
       \frac{L}{n}\sqrt{c\log\frac{1}{\delta_1}} \right\} \le \delta_1~.
\end{equation}
Now note that the left-hand side of the inequality may be lower bounded as folows:
\begin{eqnarray*}
\lefteqn{
\PROB\left\{ \wh{\mu}_n(Y_1,\ldots,Y_n) < \frac{c}{n} -
       \frac{L}{n}\sqrt{c\log\frac{1}{\delta_1}} \right\}  }    \\
& \ge &
\PROB\left\{ \wh{\mu}_n(Y_1,\ldots,Y_n) < \frac{c}{n} -
       \frac{L}{n}\sqrt{c\log\frac{1}{\delta_1}}, \sum_{i=1}^n Y_i = c \right\} \\
& \ge &
\frac{1}{e\sqrt{c}}\PROB\left\{ \wh{\mu}_n(Y_1,\ldots,Y_n) < \frac{c}{n} -
       \frac{L}{n}\sqrt{c\log\frac{1}{\delta_1}} \Big{|} \sum_{i=1}^n Y_i = c \right\} \\
& & \text{(from the fact that $ \sum_{i=1}^n Y_i$ is Poisson with parameter $c$ and Stirling's formula)}
\end{eqnarray*}
Next we use the fact that the conditional joint distribution of $n$ independent 
Poisson($\lambda$) random variables, conditioned on the event that 
 their sum equals $c$, only depends on $c$ but not $\lambda$. In particular,
\begin{eqnarray*}
\lefteqn{
\PROB\left\{ \wh{\mu}_n(Y_1,\ldots,Y_n) < \frac{c}{n} -
       \frac{L}{n}\sqrt{c\log\frac{1}{\delta_1}} \Big{|} \sum_{i=1}^n Y_i = c \right\}    }\\
& = & \PROB\left\{ \wh{\mu}_n(X_1,\ldots,X_n) < \frac{c}{n} -
       \frac{L}{n}\sqrt{c\log\frac{1}{\delta_1}} \Big{|} \sum_{i=1}^n X_i = c \right\}~.
\end{eqnarray*}
Thus, together with (\ref{eq:Poi1}), and the choice $\delta_1=
1/(2e\sqrt{c})$. we have that
\begin{eqnarray*}
\frac{1}{2} &=&    1- e\sqrt{c} \delta_1 \\
 & \le &
\frac{ \PROB\left\{ \wh{\mu}_n(X_1,\ldots,X_n) \ge \frac{c}{n} -
       \frac{L}{n}\sqrt{c\log\frac{1}{\delta_1}} , \sum_{i=1}^n X_i = c \right\}  }
{\PROB\left\{ \sum_{i=1}^n X_i = c \right\} } 
\\
& \le & e c! \PROB\left\{ \wh{\mu}_n(X_1,\ldots,X_n) \ge \frac{c}{n} -
       \frac{L}{n}\sqrt{c\log\frac{1}{\delta_1}} \right\}  \\
& \le & e c! \PROB\left\{ \wh{\mu}_n(X_1,\ldots,X_n) \ge \frac{1}{n} + \frac{c-1}{n} -
       \frac{L}{n}\sqrt{c\log\frac{1}{\delta_1}}\right\}  \\
& \le & e c! \PROB\left\{ \wh{\mu}_n(X_1,\ldots,X_n) \ge \frac{1}{n} + \frac{c-1}{2n} \right\}~,
\end{eqnarray*}
where we used the fact that 
\[
       \frac{L}{n}\sqrt{c\log\frac{1}{\delta_1}} \le \frac{c-1}{2n}~,
\]
that follows from our choice of $\delta_1$ whenever $L\ge 10$.
Now since $\wh{\mu}_n$
is $L$-sub-Gaussian for $\delta_2=2e^{-L^4/4}$, we have that
\[
   \PROB\left\{ \wh{\mu}_n(X_1,\ldots,X_n) \ge \frac{1}{n} +
     \frac{c-1}{2n} \right\}   =
   \PROB\left\{ \wh{\mu}_n(X_1,\ldots,X_n) \ge \frac{1}{n} +
     \frac{L}{n}\sqrt{\log(2/\delta_2)} \right\}   \le \delta_2~.
\]
Summarizing, we have $1/2\le e c! \delta_2 = 2e c!
e^{-L^4/4}$. However, the expression on the right-hand side is less
than $1/2$ for $L\ge 50$, leading to a contradiction. 
\end{proof}

We refer to \cite{DeLeLuOl16} for a more complete version of Theorem
\ref{thm:multidelta} and for an extensive discussion on constructing estimators
that do not require knowledge of the desired confidence parameter (i.e., estimators that are sub-Gaussian for a wide range of
values of $\delta$). In \cite{DeLeLuOl16} it is shown how Lepski's method may be used to construct such estimators if some additional information,
other than finiteness of the variance, is available on the underlying distribution. In particular, if nontrivial upper and lower
bounds on the variance are available, then such
``$\delta$-independent'' estimators exist for a wide range of values
of $\delta$. Existence of higher moments
or certain weak symmetry assumptions may also be used. 

\section{Estimating the mean of a random vector}
\label{sec:vectorest}

In what follows, we discuss extensions of the mean estimation problem
to the multivariate setting. To set up the problem, let $X$ be a random vector taking values in $\R^d$.
Assume that the mean vector $\mu = \EXP X$
and covariance matrix $\Sigma= \EXP (X-\mu) (X-\mu)^T$ exist.
Given $n$ independent, identically distributed samples
$X_1,\ldots,X_n$ drawn from the distribution of $X$, one
wishes to estimate the mean vector.

Just like in the univariate case,
a natural choice is the sample mean $\ol{\mu}_n=(1/n)\sum_{i=1}^n X_i$ and it has a near-optimal behavior whenever the
distribution is sufficiently light tailed. However, as is the case in the univariate case,
whenever
heavy tails are a concern, the sample mean is to be avoided
as it may have a sub-optimal performance.

\subsection{Sub-Gaussian performance}

For the univariate problem, we constructed mean estimators with a
sub-Gaussian performance. In order to properly set up our goal for the
$d$-dimensional case, first we need to understand what ``sub-Gaussian
performance'' means. Just like in the univariate case, one would like to construct
estimators that are ``close'' to the true mean $\mu$, with ``high
probability''.  The first question is how one measures distance in
$\Rd$. Arguably, the most natural distance measure is the Euclidean
norm. 
In this section we focus on this choice and we denote by $\| \cdot \|$ 
the Euclidean norm.  We explore mean estimation of a
random vector with respect to an arbitrary norm in Section
\ref{sec:vector-MOM-general}.  

If $X$ has a multivariate normal distribution with mean vector $\mu$
and
covariance matrix $\Sigma$, then the sample mean $\ol{\mu}_n$ is also multivariate
normal with mean $\mu$ and covariance matrix $(1/n)\Sigma$. Thus, for all $t>0$,
\[
   \PROB\left\{ \|\ol{\mu}_n-\mu\|\ge \EXP \|\ol{\mu}_n-\mu\| + t \right\}
= \PROB\left\{ \left\|\ol{X} \right\|- \EXP \left\|\ol{X} \right\| \ge   t\sqrt{n} \right\}~,
\]
where $\ol{X}$ is a Gaussian vector in $\R^d$ with zero mean and covariance matrix $\Sigma$.
A key property of Gaussian vectors is that $\ol{X}$ has the same distribution as $\Sigma^{1/2}Y$
where $Y$ is a standard normal vector (i.e., with zero-mean and identity covariance matrix) and
$\Sigma^{1/2}$ is the positive semidefinite square root of $\Sigma$.
Also, observe that for all $y,y'\in \Rd$,
\[
    \left| \left\|\Sigma^{1/2}y\right\|- \left\|\Sigma^{1/2}y'\right\| \right| \le \left\|\Sigma^{1/2}(y-y')\right\| \le \left\|\Sigma^{1/2}\right\|_{2 \to 2} \cdot \|y-y'\|~,
\]
where $\left\|\Sigma^{1/2}\right\|_{2 \to 2}$ is the spectral norm of $\Sigma^{1/2}$. Thus,
$\Sigma^{1/2}y$ is a Lipschitz function of $y\in \Rd$ with Lipschitz constant $\|\Sigma^{1/2}\|_{2 \to 2}= \sqrt{\lambdamax}$,
with $\lambdamax=\lambdamax(\Sigma)$ denoting the largest eigenvalue of the covariance matrix $\Sigma$.
Now it follows from the Gaussian concentration inequality
of Tsirelson, Ibragimov, and Sudakov~\cite{TsIbSu76}
(see also Ledoux \cite{Led01} and Boucheron, Lugosi, and Massart
\cite{BoLuMa13} for more information)
that
\[
\PROB\left\{ \left\|\ol{X} \right\|- \EXP \left\|\ol{X} \right\| \ge   t\sqrt{n} \right\}  \le e^{-nt^2/(2\lambdamax)}~.
\]
Noting that
\[
   \EXP \left\|\ol{X} \right\| \le \sqrt{ \EXP \left\|\ol{X} \right\|^2} = \sqrt{\Tr(\Sigma)}~,
\]
the trace of the covariance matrix $\Sigma$, we have that, for $\delta \in (0,1)$, with probability at least $1-\delta$,
\begin{equation}
\label{eq:subgaussmulti}
   \|\ol{\mu}_n-\mu\| \le \sqrt{\frac{\Tr(\Sigma)}{n}}
    + \sqrt{\frac{2\lambdamax \log(1/\delta)}{n}}~.
\end{equation}
Thus, in the multivariate case, we will say that a mean estimator is
\emph{sub-Gaussian} if, with probability at least $1-\delta$, it
satisfies an inequality of the form (\ref{eq:subgaussmulti}) (with
possibly different constant factors).
Note that for any distribution with mean $\mu$ and covariance matrix $\Sigma$, the
mean-squared error of the empirical mean equals
\[
     \EXP \|\ol{\mu}_n-\mu\|^2 = \frac{\Tr(\Sigma)}{n}~.
\]
In particular, $\EXP \|\ol{\mu}_n-\mu\| \le \sqrt{\frac{\Tr(\Sigma)}{n}}$. An important feature of the sub-Gaussian
property (\ref{eq:subgaussmulti}) is that the random fluctuations are controlled by the spectral norm $\lambdamax$
of the covariance matrix, which is possibly much smaller than $\Tr(\Sigma)$, the sum of all eigenvalues of $\Sigma$.

\subsection{Multivariate median-of-means}

For non-Gaussian and possibly
heavy-tailed distributions, one cannot expect a sub-Gaussian
behavior of the sample mean similar to (\ref{eq:subgaussmulti}).

As an alternative, one may try to extend the median-of-means estimator to the multivariate
case. An obvious idea is to divide the data into disjoint blocks, calculate the
empirical mean within each block, and compute a multivariate median of the 
obtained empirical means. However, there is no standard notion of a
median for multivariate data, and it is not entirely clear what
definition of a multivariate median works best for median-of-means
mean estimators. Among the numerous possibilities, we mention
the \emph{coordinate-wise median}, the \emph{geometric (or spatial) median},
the \emph{Tukey (or halfspace) median}, the \emph{Oja median},
and the \emph{Liu median},
see Small \cite{Sma90} for a survey and relevant references.

Regardless of what notion of a multivariate median we decide to adopt, 
we start by partitioning $[n]=\{1,\dots,n\}$ into $k$ blocks $B_1,\ldots,B_k$, each of
size
$|B_i|\geq \lfloor n/k\rfloor\geq 2$. Here $k$ is a parameter of the estimator to be chosen later.
For simplicity, we assume that $km=n$ for some positive integer $m$.
Just like before,
we compute the sample mean of the random vectors within each block: for $j=1,\ldots,k$, let
$$
Z_j=\frac{1}{m}\sum_{i\in B_j}X_i~.
$$
Perhaps the most natural first try is to define $\wh{\mu}_n$ as the vector of coordinate-wise 
medians of the $Z_j$ (i.e., the $\ell$-th component of the vector $\wh{\mu}_n$ is the median 
of the $\ell$-th components of $Z_1,\ldots,Z_k$, for $\ell\in [d]$). Then Theorem \ref{thm:mom}
and the union bound imply that, for any $\delta\in (0,1)$, taking $k= \left\lceil 8 \log(1/\delta) \right\rceil$, with probability
at least $1-\delta$,
\[
   \left\|\wh\mu_n-\mu \right\| \le  \sqrt{\frac{32\Tr(\Sigma)\log(d/\delta)}{n}}~,
\]
where we used the fact that $\Tr(\Sigma)=\EXP\|X-\EXP X\|^2$ is the
sum of the variances of the $d$ components of $X$. Clearly, this bound
is far from the sub-Gaussian inequality  (\ref{eq:subgaussmulti}) for
several reasons. First, it is not ``dimension-free'' as $d$ appears
explicitly in the bound. Perhaps more importantly, $\log(1/\delta)$ is
multiplied by $\Tr(\Sigma)$ instead of $\lambdamax(\Sigma)$ and that may
be a major difference in high-dimensional problems, especially when one is
interested in small failure probabilities. An instructive example is
when all eigenvalues of $\Sigma$ are identical and equal to
$\lambdamax$.  If the dimension $d$ is large, \eqref{eq:subgaussmulti}
is of the order of $\sqrt{(\lambdamax/n)(d+\log (1/\delta))}$ while
the bound above only gives the order
$\sqrt{(\lambdamax/n)(d\log (d/\delta))}$.

One may quite easily improve on this by using a different (non-standard) notion of median 
in the definition of the estimate:
choose $\wh\mu_n$ to be the point in $\Rd$ with the property that the Euclidean ball centered at
$\wh\mu_n$ that contains more than $k/2$ of the points $Z_j$ has minimal radius.
Since $\EXP \|Z_j-\mu\|^2 = \Tr(\Sigma)/m$, by Chebyshev's inequality,
 $\|Z_j-\mu\| \le r \defeq 2\sqrt{\Tr(\Sigma)/m}$ with probability at least $3/4$. Thus, by choosing
$k= \left\lceil 8 \log(1/\delta) \right\rceil$, we have that, with probability at least $1-\delta$,
more than half of the points $Z_j$ satisfy
\[
   \|Z_j-\mu\| \le r~.
\]
Denote this event by $E$. (Thus, $\PROB\{E\}\ge 1-\delta$.)
 On the event $E$,
this radius is at most $r$. Hence, at least one of the $Z_j$ is within distance $r$ to both $\mu$ and $\wh\mu_n$.
Thus, by the triangle inequality, $\|\wh\mu_n-\mu\| \le 2r$. We have obtained the following proposition.

\begin{proposition}
\label{prop:mmm}
Let $X_1,\ldots,X_n$ be i.i.d.\ random vectors in $\Rd$ with mean $\mu$ and covariance matrix $\Sigma$.
Let $\delta\in (0,1)$ and
let $\wh\mu_n$ be the estimator defined above with $k= \left\lceil 8 \log(1/\delta)\right\rceil$.
Then, with probability at least $1-\delta$,
\[
  \left\|\wh\mu_n-\mu\right\| \le 4\sqrt{\frac{\Tr(\Sigma) (8 \log(1/\delta)+1)}{n}}~.
\]
\end{proposition}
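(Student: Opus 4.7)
The plan is to mimic the univariate median-of-means argument (Theorem \ref{thm:mom}), replacing the scalar median by the Euclidean ``smallest majority-covering ball'' construction used to define $\wh\mu_n$. The first step is to control each block mean $Z_j$ in Euclidean norm. Since the $X_i$ are i.i.d.\ with covariance $\Sigma$, one has $\EXP\|Z_j - \mu\|^2 = \Tr(\Sigma)/m$, the vector analogue of $\sigma^2/m$. Chebyshev's inequality applied to the nonnegative random variable $\|Z_j - \mu\|^2$ then gives that $\|Z_j - \mu\| \le r \defeq 2\sqrt{\Tr(\Sigma)/m}$ with probability at least $3/4$.

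The second step promotes this per-block estimate to a majority statement. Writing $I_j = \IND_{\|Z_j - \mu\| \le r}$, the $I_j$ are independent Bernoulli with mean at least $3/4$, so Hoeffding's inequality, exactly as in the proof of Theorem \ref{thm:mom}, yields
\[
\PROB\left\{\sum_{j=1}^k I_j \le k/2\right\} \le e^{-k/8}~.
\]
With $k = \lceil 8\log(1/\delta) \rceil$, the right-hand side is at most $\delta$, so with probability at least $1-\delta$ the event $E = \{\text{strictly more than } k/2 \text{ of the } Z_j \text{ lie in } B(\mu,r)\}$ holds.

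The third and key step is a short pigeonhole argument. On $E$, the ball $B(\mu,r)$ is a valid majority-covering ball, so by the defining minimality of $\wh\mu_n$ the associated minimum radius $r'$ satisfies $r' \le r$, and the ball $B(\wh\mu_n, r')$ contains strictly more than $k/2$ of the $Z_j$. Two subsets of $\{Z_1,\dots,Z_k\}$ each of cardinality larger than $k/2$ must intersect, so there exists an index $j^\star$ with $\|Z_{j^\star} - \mu\| \le r$ and $\|Z_{j^\star} - \wh\mu_n\| \le r' \le r$. The triangle inequality then gives $\|\wh\mu_n - \mu\| \le 2r$, and plugging in $r = 2\sqrt{\Tr(\Sigma)/m}$ together with $m = n/k$ and $k \le 8\log(1/\delta)+1$ yields the claimed bound.

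I do not anticipate a serious obstacle; the only care point is ensuring $\wh\mu_n$ is well-defined and that the infimum in its construction is attained (or at least bounded above by $r$), which follows because on $E$ the point $\mu$ itself witnesses feasibility. The pigeonhole-plus-triangle-inequality combination is precisely what lets the Euclidean construction succeed where a coordinate-wise median would lose a factor of $\sqrt{\log d}$ and replace $\Tr(\Sigma)$ by a union-bound artifact, and it is the conceptual heart of the proof.
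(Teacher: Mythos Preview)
Your proposal is correct and follows essentially the same argument as the paper: Chebyshev on $\|Z_j-\mu\|^2$ to get the per-block bound with probability $3/4$, Hoeffding to upgrade to a majority with probability $1-\delta$, then the pigeonhole/triangle-inequality step to conclude $\|\wh\mu_n-\mu\|\le 2r$. The paper's presentation is terser but the logic is identical.
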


The bound of Proposition \ref{prop:mmm} is quite remarkable as it is ``dimension-free'' and
no assumption other
than the existence of the covariance matrix is made. However, it still does not
achieve a sub-Gaussian performance bound that resembles
\eqref{eq:subgaussmulti}. Moreover, the notion of median used here (i.e., 
the center of the smallest ball that contains at least half of the points) is
problematic from a computational point of view, since computing such a median is a nontrivial problem.

An efficiently computable version of a multivariate median is the so-called \emph{geometric median}, defined as
\[
    \wh\mu_n = \argmin_{m\in \Rd} \sum_{j=1}^k \|Z_i-m\|~.
\]
This estimator was proposed by Minsker \cite{Min15} and independently by Hsu and Sabato \cite{HsSa16} (see also Lerasle and Oliveira \cite{LerasleOliveira_Robust}). In particular, Minsker \cite{Min15} proved that this version of
the multivariate median-of-means estimator achieves a similar performance bound as Proposition \ref{prop:mmm}.
Moreover, computing the geometric median---and therefore
the multivariate median-of-means estimator---involves solving a convex optimization problem. Thus, the geometric median may be approximated efficiently,
see Cohen, Lee, Miller, Pachocki, and Sidford \cite{CoLeMiPaSi16} for the most recent result and for the rich history of the problem.
We refer to Aloupis \cite{Alo06} for a survey of computational aspects of
various other notions of multivariate medians.

For a quite different mean estimator based on the median-of-means idea
with ``almost'' sub-Gaussian guarantees but with a serious computational
burden, see Joly, Lugosi, and Oliveira \cite{JoLuOl17}.

In order to achieve a truly sub-Gaussian performance, we need to
define a new estimator. In what follows we define two that achieve the desired performance: the
first, introduced in \cite{LuMe16a} is based on the idea of median-of-means
tournaments and the second, from \cite{LuMe18a}, is defined using the
intersection of random slabs. The former leads to an error estimate
with respect to the Euclidean norm (see Section \ref{sec:mmt}), and the latter, described in
Section \ref{sec:vector-MOM-general} holds with respect to an arbitrary norm.
However, before presenting these estimates, we recall a very different estimator 
introduced by Catoni and Giulini \cite{CaGi18}.

\subsection{Thresholding the norm: the Catoni-Giulini estimator}

In this section we briefly discuss a remarkably simple estimator,
suggested 
and analyzed by Catoni and Giulini \cite{CaGi18}.
The Catoni-Giulini estimator is 
\begin{equation}
\label{eq:cagi}
    \wh\mu_n = \frac{1}{n}\sum_{i=1}^n X_i
    \min\left(1,\frac{1}{\alpha \|X_i\|}  \right)~,
\end{equation}
where $\alpha>0$ is a (small) parameter. Thus, $\wh\mu_n$ is simply an
empirical average of the $X_i$, with the data points with large norm
shrunk towards zero. This estimate is trivial to compute, as opposed
to the more complex estimators that we discuss in Sections
\ref{sec:mmt} and \ref{sec:vector-MOM-general}. On the other hand, shrinking to
zero is somewhat arbitrary and unnatural. In fact, the estimator is
not invariant under translations of the data in the sense that 
$\wh\mu_n(X_1+a,\ldots,X_n+a)$ is not necessarily equal to
$\wh\mu_n(X_1,\ldots,X_n)+a$ when $a\neq 0$. 

Catoni and Giulini prove that if one chooses the
parameter 
as 
\[
\alpha=\sqrt{\frac{c\log(1/\delta)}{v n}}~,
\]
where $v\ge \lambdamax$ and $c>0$ is a numerical constant, 
then the estimator (\ref{eq:cagi}) satisfies, with probability at
least $1-\delta$,
\[
   \left\|\wh\mu_n - \mu\right\| \le C\sqrt{\frac{\left(\Tr(\Sigma)+ v+\|\mu\|^2\right)\log(1/\delta)}{n}}~,
\]
where $C$ is a constant depending on $c$ only. This bound is similar
to but weaker than that of Proposition \ref{prop:mmm}, principally due
to two facts. First, the estimator requires prior knowledge of (a good upper
bound of) $\lambdamax$ whereas the geometric median-of-means estimator 
assumes no such prior information. Second, $\|\mu\|^2$ appears in the
upper bound and a priori this can be arbitrarily large compared to
$\Tr(\Sigma)$. The presence of this term is due to the lack of
translation invariance of the estimator.
This second issue may be fixed by  defining a two-stage estimator: first one may
use a translation-invariant estimator like geometric-median-of-means defined in the
previous section to get a rough estimate of the mean. Then, using a
new batch of independent data, one may center the data at the
estimated mean and then use the Catoni-Giulini estimator for the
centered data. This new estimator is translation invariant, and the
term
$\|\mu\|^2$ may be replaced by the squared error of the estimator of
the first step, that is, by $\Tr(\Sigma)\log(1/\delta)/n$. But even with
this modification, the bound is not sub-Gaussian in the sense of
(\ref{eq:subgaussmulti}).

Remarkably, however, the performance of the Catoni-Giulini
estimator comes close to being sub-Gaussian in the desired sense under 
just a small extra assumption. In particular, if $\EXP \|X\|^\beta
<\infty$ for some $\beta>2$, then, with the same choice of $\alpha$ as
above, one has
\[
  \left\|\wh\mu_n - \mu\right\| \le C\left(
    \sqrt{\frac{v\log(1/\delta)}{n}} +
    \sqrt{\frac{\left(\Tr(\Sigma)+v\right)}{n}}
+ \frac{\kappa_\beta}{n^{(\beta-1)/2}}\right)~,
\]
where $\kappa_\beta$ is a constant that depends on $\beta$ and the
$\beta$-th raw moment of $\|X\|$. Thus, if the prior parameter $v$ 
is a good estimate of $\lambdamax$ in the sense that it is bounded by
a constant multiple of it, then the first two terms of the bound
are of the desired sub-Gaussian form. The third term is of smaller
order though again, it can be arbitrarily large if the mean is far
from the origin, which may be remedied by making the estimator
more complex. We refer to Catoni and Giulini \cite{CaGi17} for other
estimates of a similar spirit and more discussion. The proof
techniques of \cite{CaGi17,CaGi18} rely on so-called
\emph{PAC-Bayesian} inequalities whose details go beyond the scope of
this survey.

\subsection{Median-of-means tournaments}
\label{sec:mmt}

Here we introduce a mean estimator with a sub-Gaussian performance for
all distributions whose covariance matrix exists, proposed by Lugosi and Mendelson \cite{LuMe16a}.
The estimator presented below is the first and simplest instance of what we call \emph{median-of-means tournaments}.

Recall that we are given an i.i.d.\ sample $X_1,\ldots,X_n$
of random vectors in $\Rd$.
As in the case of the median-of-means
estimator, we start by partitioning the set
$\{1,\dots,n\}$ into $k$ blocks $B_1,\ldots,B_k$, each of
size
$|B_j|\geq m \defeq \lfloor n/k\rfloor$, where $k$ is a parameter
of the estimator whose value depends on the desired confidence level,
as specified below.
In order to simplify the presentation, we assume that $n$ is divisible by $k$ and therefore $|B_j|=m$
for all $j=1,\ldots,k$.

Define the sample mean within each block by
\[
Z_j=\frac{1}{m}\sum_{i\in B_j}X_i~.
\]
For each $a\in \R^d$, let
\begin{equation}
\label{eq:sa}
    T_a=\left\{ x\in \R^d: \exists J\subset [k]: |J| \ge k/2 \ \text{such that for all} \
      j\in J, \  \|Z_j-x\| \le \|Z_j-a\| \right\}
\end{equation}
and define the mean estimator by
\[
\wh\mu_n \in \argmin_{a\in \Rd} \radius(T_a)~,
\]
where $\radius(T_a)=\sup_{x\in T_a} \|x-a\|$.
Thus, $\wh\mu_n$ is chosen to minimize, over all $a\in \R^d$,
the radius of the set $T_a$ defined as the set of points $x\in \R^d$ for which
$\|Z_j-x\| \le \|Z_j-a\|$ for the majority of the blocks. If there are several minimizers, one may pick any one of them.

The set $T_a$ may be seen as the set of points in $\Rd$ that are at least as close to the point cloud 
$\{Z_1,\ldots,Z_k\}$ as the point $a$. The estimator $\wh\mu_n$ is obtained by minimizing the radius of $T_a$.

Note that the minimum is always achieved. This follows from the fact
that $\radius(T_a)$ is a continuous function of $a$
(since, for each $a$, $T_a$ is the intersection of a finite union of
closed balls, and the centers and radii of the closed balls are continuous in $a$).

One may interpret $\argmin_{a\in \Rd} \radius(T_a)$ as yet another multivariate notion
of the median of $Z_1,\ldots,Z_k$. Indeed, when $d=1$, it is a particular
choice of the median and
the estimator coincides
with the median-of-means estimator.

The following performance bound shows that the estimator has the desired sub-Gaussian performance.

\begin{theorem} \label{thm:mmtournament}
(Lugosi and Mendelson \cite{LuMe16a}.)
Let $\delta \in (0,1)$ and consider the mean
estimator $\wh{\mu}_n$ with parameter $k= \lceil 200\log(2/\delta)\rceil $.
If $X_1,\ldots,X_n$ are i.i.d.\ random vectors in $\Rd$ with mean
$\mu\in \Rd$ and covariance matrix $\Sigma$, then for all $n$, with probability at least $1-\delta$,
\[
   \left\|\wh{\mu}_n-\mu\right\|
\le  \max\left(960 \sqrt{\frac{\Tr(\Sigma)}{n}},
  240 \sqrt{\frac{\lambdamax \log(2/\delta)}{n}} \right)~.
\]
\end{theorem}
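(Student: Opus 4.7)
The plan is to identify a ``good event'' $E$ on which the deterministic structure of the estimator forces $\|\widehat\mu_n-\mu\|<r$, where
\[
r \;\defeq\; \max\!\left(960 \sqrt{\Tr(\Sigma)/n},\ 240 \sqrt{\lambdamax\,\log(2/\delta)/n}\right),
\]
and then to bound $\PROB(E^c)\le\delta$. The geometric observation driving the argument is that $\|Z_j-x\|\le\|Z_j-a\|$ if and only if $\langle Z_j-(x+a)/2,\, x-a\rangle\le 0$, so membership in $T_a$ depends only on one-dimensional projections of $Z_j-\mu$. This motivates
\[
E \;=\; \left\{\, \forall u\in S^{d-1}:\ \left|\left\{j\in[k]:\,|\langle Z_j-\mu,u\rangle|\ge r/2\right\}\right| < k/2 \,\right\}.
\]

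Under $E$ I can show $\|\widehat\mu_n - \mu\| < r$ by a purely deterministic argument in two halves. First, for any $a$ with $\|a-\mu\|\ge r$ set $v=(\mu-a)/\|\mu-a\|$: on $E$ fewer than $k/2$ blocks satisfy $\langle Z_j-\mu,v\rangle\le -r/2$, so strictly more than $k/2$ satisfy $\langle Z_j-\mu,v\rangle > -r/2 \ge -\|\mu-a\|/2$, which rearranges into $\|Z_j-\mu\|\le\|Z_j-a\|$. Hence $\mu\in T_a$ and $\radius(T_a)\ge\|a-\mu\|\ge r$; specializing to $a=\widehat\mu_n$, this says that if $\|\widehat\mu_n-\mu\|\ge r$ then $\radius(T_{\widehat\mu_n})\ge r$. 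Second, if some $x\in T_\mu$ satisfied $\|x-\mu\|\ge r$, then $u=(x-\mu)/\|x-\mu\|$ would witness $\ge k/2$ blocks with $\langle Z_j-\mu,u\rangle\ge r/2$, contradicting $E$; therefore $\radius(T_\mu)<r$. Since $\widehat\mu_n$ minimizes radius, $\radius(T_{\widehat\mu_n})\le\radius(T_\mu)<r$, which is incompatible with $\|\widehat\mu_n-\mu\|\ge r$.

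The main obstacle is the probabilistic estimate $\PROB(E^c)\le\delta$, which is a uniform-in-$u$ statement over the infinite sphere $S^{d-1}$. For a single direction $u$, Chebyshev gives $\PROB\{|\langle Z_j-\mu,u\rangle|\ge r/2\}\le 4\,u^T\Sigma u/(mr^2)\le 4\lambdamax/(mr^2)$, and with $k\asymp\log(2/\delta)$ and $m=n/k$ the choice of $r$ drives this well below $1/4$, so Hoeffding/Chernoff controls a fixed $u$ with failure probability $\le e^{-ck}$. Uniformity is the delicate step: splitting by sign, $E^c$ implies the existence of a subset $J\subseteq[k]$ with $|J|\ge k/4$ and a unit vector $u$ such that $\langle Z_j-\mu,u\rangle\ge r/2$ for every $j\in J$, which forces $\bigl\|(1/|J|)\sum_{j\in J}(Z_j-\mu)\bigr\|\ge r/2$. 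A crude union bound over $\binom{k}{k/4}$ subsets would destroy the sub-Gaussian rate, so instead I would split the argument into two contributions. The $\sqrt{\Tr(\Sigma)/n}$ term arises from Chebyshev on the full average $(1/k)\sum_j(Z_j-\mu)$, whose expected squared norm equals $\Tr(\Sigma)/n$; the $\sqrt{\lambdamax\log(1/\delta)/n}$ term arises from a directional Bernstein estimate exploiting that any fixed direction gives block variance only $\lambdamax/m$, combined with exponential concentration in the block count. Routing the uniformity via the linearity of $u\mapsto\langle Z_j-\mu,u\rangle$ (for instance by a peeling argument across blocks, or by reducing to an empirical process whose complexity is controlled by the covariance structure rather than by the ambient dimension) and balancing against $k\asymp\log(2/\delta)$ should produce the stated bound with the indicated numerical constants.
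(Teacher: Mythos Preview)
Your deterministic reduction is correct and is essentially the same geometric observation the paper uses: the equivalence $\|Z_j-x\|\le\|Z_j-a\|\iff\langle Z_j-\mu,\,x-a\rangle\le \langle (x+a)/2-\mu,\,x-a\rangle$ lets you recast membership in $T_a$ in terms of one-dimensional projections, and on the event $E$ you defined the conclusion $\|\wh\mu_n-\mu\|<r$ follows exactly as you wrote. The paper packages this as Theorem~\ref{thm:geometry} and the subsequent paragraph.

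The gap is in the probabilistic step $\PROB(E^c)\le\delta$, which you correctly identify as the heart of the matter but do not actually prove. Your suggestions are not a proof and some of them cannot work as stated. A ``directional Bernstein estimate'' is unavailable: the only assumption is a finite covariance matrix, so the $Z_j-\mu$ have no moments beyond the second and Bernstein-type bounds do not apply. Chebyshev on the full average $(1/k)\sum_j(Z_j-\mu)$ controls that single quantity but says nothing about the sup over all quarter-size subsets $J$, which is what your own reduction requires. And the phrases ``peeling argument'' and ``reducing to an empirical process whose complexity is controlled by the covariance structure'' are gestures in the right direction but leave the entire technical content unspecified.

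What the paper actually does (Lemma~\ref{lem:tournament}) is a net-plus-oscillation argument of the small-ball-method type. One first handles a single $v\in rS^{d-1}$ by Chebyshev on each block and Hoeffding across blocks, giving failure probability $e^{-k/50}$. One then takes an $\epsilon$-net $V_1\subset rS^{d-1}$ in the $L_2(X)$ norm and bounds $\log|V_1|$ by the \emph{dual Sudakov inequality}, which is where $\Tr(\Sigma)$ enters: choosing the mesh $\epsilon\sim r\sqrt{\Tr(\Sigma)/k}$ forces $|V_1|\le 2e^{k/100}$, so the union bound over the net costs only $e^{-ck}$. Finally, one controls the oscillation term $\sup_{x\in rS^{d-1}}k^{-1}\sum_j\IND\{|m^{-1}\sum_{i\in B_j}\langle \ol X_i,x-v_x\rangle|\ge r^2/4\}$ by symmetrization, the contraction lemma for Rademacher averages, and the bounded differences inequality. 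This last step is where the constraint $r\gtrsim\sqrt{\Tr(\Sigma)/n}$ is used a second time. None of these ingredients appears in your proposal; the dual Sudakov step in particular is the key idea that makes the net small enough without paying a dimension factor.
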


Just like the performance bound of Proposition \ref{prop:mmm},
Theorem \ref{thm:mmtournament} is ``infinite-dimensional'' in the sense that
the bound does not depend on the dimension $d$ explicitly. Indeed,
the same estimator may be defined for Hilbert-space valued random vectors
and Theorem \ref{thm:mmtournament} remains valid as long as $\Tr(\Sigma)=\EXP\|X-\mu\|^2$
is finite.

Theorem \ref{thm:mmtournament} is an outcome of the
following observation.

\begin{theorem} \label{thm:geometry}
Using the same notation as above and setting
\[
r=\max\left(960 \sqrt{\frac{\Tr(\Sigma)}{n}},
  240 \sqrt{\frac{\lambdamax \log(2/\delta)}{n}} \right)~,
\]
with probability at least $1-\delta$, for any $a \in \Rd$ such that $\|a-\mu\| \geq r$, one has $\|Z_j-a\| > \|Z_j-\mu\|$ for more than $k/2$ indices $j$.
In other words, $\|a-\mu\| \geq r$ implies that $a\notin T_{\mu}$.
\end{theorem}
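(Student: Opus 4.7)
The plan is to pass to a geometric reformulation in terms of inner products, and then prove a single uniform concentration statement on the sphere. Expanding squares, $\|Z_j - a\| > \|Z_j - \mu\|$ is equivalent to
\[
\langle Z_j - \mu, v\rangle < \frac{\|a-\mu\|}{2}, \qquad v = \frac{a-\mu}{\|a-\mu\|}.
\]
Since $\|a - \mu\| \geq r$, the claim follows once I establish the stronger uniform assertion that, with probability at least $1-\delta$, for every unit vector $v \in \Rd$,
\[
\left| \left\{ j \in [k]: \langle Z_j - \mu, v\rangle \geq r/2 \right\} \right| < k/2.
\]
Applied to $\pm v$ this single statement handles arbitrary directions of $a - \mu$.

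Write $Y_j = Z_j - \mu$ and $W_j(v) = \IND\{\langle Y_j, v\rangle \geq r/2\}$. I would first control the expectation pointwise in $v$ via Chebyshev: since $\EXP \langle Y_j, v\rangle^2 = \langle \Sigma v, v\rangle / m \le \lambdamax/m$,
\[
\EXP W_j(v) \le \frac{4\lambdamax}{m r^2}.
\]
Substituting $r \geq 240\sqrt{\lambdamax \log(2/\delta)/n}$ together with $k = \lceil 200 \log(2/\delta)\rceil$ (so $m = n/k$) forces this to be a small absolute constant, so $\EXP \sum_j W_j(v)$ is well below $k/4$ uniformly over $v$. The task therefore reduces to bounding the uniform deviation $\sup_{\|v\| = 1} \sum_j \bigl(W_j(v) - \EXP W_j(v)\bigr)$ by roughly $k/4$ with probability at least $1-\delta$.

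This uniform control is the main obstacle: a naive union bound over a net of the sphere produces a spurious factor depending on $d$, whereas the target bound must be dimension-free. My plan is to dominate the indicator by a $(2/r)$-Lipschitz function $\psi:\R \to [0,1]$ with $\psi(0)=0$ that equals $1$ on $[r/2,\infty)$, so that $W_j(v) \leq \psi(\langle Y_j, v\rangle)$ pointwise, and then bound $\sup_v \sum_j \psi(\langle Y_j, v\rangle)$ via standard symmetrization and Talagrand's contraction principle. Contraction strips off $\psi$ and yields
\[
\EXP \sup_{\|v\|=1} \sum_{j=1}^k \epsilon_j \psi(\langle Y_j, v\rangle) \leq \frac{C}{r}\, \EXP \sup_{\|v\|=1} \sum_{j=1}^k \epsilon_j \langle Y_j, v\rangle = \frac{C}{r}\, \EXP \Bigl\| \sum_{j=1}^k \epsilon_j Y_j \Bigr\|,
\]
for a numerical constant $C$, and by Jensen together with independence the last expectation is at most $\sqrt{k\,\EXP\|Y_1\|^2} = k\sqrt{\Tr(\Sigma)/n}$. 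The choice $r \geq 960\sqrt{\Tr(\Sigma)/n}$ then drives this Rademacher average down to a small multiple of $k$. A final application of Talagrand's concentration inequality for uniformly bounded empirical processes controls the fluctuation of $\sup_v \sum_j W_j(v)$ around its expectation; since $k \asymp \log(2/\delta)$, this fluctuation is again at most a small fraction of $k$ with probability $1-\delta$. Combining the pointwise mean bound, the symmetrized expectation bound, and the concentration step yields $\sup_v \sum_j W_j(v) < k/2$, which is the claim.
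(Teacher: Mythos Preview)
Your argument is correct and takes a genuinely different route from the paper. The paper proves the result via Lemma~\ref{lem:tournament} using the \emph{small-ball method}: for a fixed direction $v$ Chebyshev plus Hoeffding give control on $0.8k$ blocks; then the dual Sudakov inequality bounds the cardinality of an $\epsilon$-net $V_1\subset r\cdot S^{d-1}$ in the $L_2(X)$ metric by $2e^{k/100}$, so a union bound handles all net points; finally, the oscillation $\sup_x (1/k)\sum_j \IND\{|m^{-1}\sum_{i\in B_j}\langle \ol X_i, x-v_x\rangle|\ge r^2/4\}$ is controlled by symmetrization, contraction, and the bounded-differences inequality. You bypass the net entirely: you majorize the threshold indicator by a Lipschitz $\psi$, apply symmetrization and contraction to the full process over $S^{d-1}$ in one shot, and finish with Talagrand's (or even bounded-differences) concentration. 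Your route is shorter for this particular problem and avoids the Sudakov covering estimate; the paper's net-plus-oscillation decomposition is more modular and is deliberately presented as a template for the general uniform median-of-means estimates of Section~\ref{sec:gennorm}, where the underlying class is no longer a Euclidean sphere.

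One small point to tighten: when you pass from $W_j(v)$ to $\psi(\langle Y_j,v\rangle)$ and then symmetrize, the mean term $\sup_v k\,\E\psi(\langle Y_1,v\rangle)$ reappears, and it is governed by the Lipschitz bound $\E\psi\le (2/r)\sqrt{\lambdamax/m}$ rather than by your Chebyshev bound on $\E W_j$. This term is still a small constant times $k$ with the stated choice of $r$ (or you can shift $\psi$ to vanish on $(-\infty,r/4]$ and recover a Chebyshev-type bound $\E\psi\le 16\lambdamax/(mr^2)$), but it should be accounted for explicitly.
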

Theorem \ref{thm:geometry} implies that for a `typical' collection $X_1,\ldots,X_n$, $\mu$ is closer to a majority of the $Z_j$'s when compared to any $a \in \Rd$ that is sufficiently far from $\mu$.
Obviously, for an arbitrary collection $x_1,\ldots,x_n \subset \Rd$
such a point need not even exist, and it is
 surprising that for a typical i.i.d.\ configuration, this property is satisfied by $\mu$.

The fact that Theorem \ref{thm:geometry} implies Theorem \ref{thm:mmtournament} is straightforward. Indeed,
the definition of $\wh{\mu}_n$ and
Theorem \ref{thm:geometry} imply that, with probability at least $1-\delta$,
$\radius(T_{\wh{\mu}_n}) \le \radius(T_\mu) \leq r$. Since either $\mu\in T_{\wh{\mu}_n}$ or $\wh{\mu}\in T_\mu$,
we must have $\|\wh{\mu}_n-\mu\| \leq r$, as required.

The constants appearing in Theorem \ref{thm:mmtournament} are certainly not optimal. They were obtained with the goal of making the proof transparent.

The proof of Theorem \ref{thm:geometry} is based on the following idea.
The mean $\mu$ is the minimizer of the function $f(x)= \EXP \|X-x\|^2$.
A possible approach is to use the available data to guess, for
any pair $a,b\in\R^d$, whether $f(a)< f(b)$. A natural choice is to
use a median of means estimator to decide which of the two is
better. The ``tournament" is simply a way of comparing every such
pair, as described next.
% and at the heart of the matter is finding a high probability event
% on which one can compare every two points
\footnote{As we explain in what follows, it suffices to ensure that the comparison is correct between $\mu$ and any point that is not too close to $\mu$.}.

To define the tournament, recall that $[n]$ is partitioned into $k$ disjoint blocks $B_1,\ldots,B_k$
of size $m=n/k$. For $a,b \in \R^d$, we say that
$a$ \emph{defeats} $b$ if
\begin{equation} \label{eq:l-2-tournament}
\frac{1}{m} \sum_{i \in B_j} \left(\|X_i-b\|^2 - \|X_i-a\|^2\right) > 0
\end{equation}
on more than $k/2$  blocks $B_j$. The main technical lemma is the following.

\begin{lemma} \label{lem:tournament}
Let $\delta\in (0,1)$, $k= \lceil 200\log(2/\delta)\rceil$, and define
\[
r=\max\left(960 \sqrt{\frac{\Tr(\Sigma)}{n}},
  240 \sqrt{\frac{\lambdamax \log(2/\delta)}{n}} \right)~.
 \]
 With probability at least $1-\delta$,
$\mu$ defeats all $b\in \Rd$ such that
$\|b-\mu\| \geq r$.
\end{lemma}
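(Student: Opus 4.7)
The plan is to reduce the defeat condition to a one-dimensional statement about projections of the block means. For any $b\neq\mu$, set $v=(b-\mu)/\|b-\mu\|$; expanding the squared norms gives
$$\frac{1}{m}\sum_{i\in B_j}\bigl(\|X_i-b\|^2-\|X_i-\mu\|^2\bigr) \;=\; \|b-\mu\|^2 \;-\; 2\|b-\mu\|\inr{Z_j-\mu,\,v}~,$$
so block $B_j$ contributes to the defeat if and only if $\inr{Z_j-\mu,v}<\|b-\mu\|/2$. Since this condition only weakens as $\|b-\mu\|$ grows past $r$, it suffices to show that with probability at least $1-\delta$, for \emph{every} unit vector $v\in\R^d$,
$$\bigl|\{j\in[k]:\,\inr{Z_j-\mu,\,v}\geq r/2\}\bigr| < k/2~.$$

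Writing $W_j(v)=\IND\{\inr{Z_j-\mu,v}\geq r/2\}$, I would split $\sum_j W_j(v)$ into its expectation $\sum_j p(v)$ and the centered sum. For the expectation, $\var(\inr{Z_j-\mu,v})=v^\top\Sigma v/m\leq \lambdamax/m$, and Chebyshev gives $p(v)\leq 4\lambdamax/(mr^2)$, which is a tiny absolute constant (well below $1/4$) given $r\geq 240\sqrt{\lambdamax\log(2/\delta)/n}$ and $k=\lceil 200\log(2/\delta)\rceil$. For the centered supremum, Gin\'e--Zinn symmetrization followed by the Ledoux--Talagrand contraction principle, applied to a $4/r$-Lipschitz piecewise-linear surrogate $\phi$ sandwiched between $\IND_{[r/2,\infty)}$ and $\IND_{[r/4,\infty)}$ with $\phi(0)=0$, yields
$$\EXP\sup_{\|v\|=1}\sum_{j=1}^k\bigl(W_j(v)-p(v)\bigr) \;\leq\; \frac{C}{r}\,\EXP\Bigl\|\sum_{j=1}^k\varepsilon_j(Z_j-\mu)\Bigr\| \;\leq\; \frac{Ck}{r}\sqrt{\Tr(\Sigma)/n}~,$$
which is a small fraction of $k$ by the choice $r\geq 960\sqrt{\Tr(\Sigma)/n}$, using $\EXP\|\sum_j\varepsilon_j(Z_j-\mu)\|\leq(\sum_j\EXP\|Z_j-\mu\|^2)^{1/2}=\sqrt{k\Tr(\Sigma)/m}$ and the duality $\sup_{\|v\|=1}\inr{w,v}=\|w\|$.

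The main obstacle is concentrating this supremum around its expectation. A naive bounded-differences argument gives deviations of order $\sqrt{n\log(1/\delta)}$, which dwarfs the target $k/2=O(\log(1/\delta))$ and is useless. One must instead invoke a Talagrand--Bousquet inequality for suprema of bounded empirical processes, exploiting the uniformly small per-block variance $\sup_v p(v)\ll 1$ to obtain a deviation of order $\sqrt{k\,\sup_v p(v)\,\log(2/\delta)}+\log(2/\delta)$, which is again a small multiple of $k$. Combining the three contributions---the expected count $\sum_j p(v)$, the Rademacher complexity, and this sharper deviation---and arranging the numerical constants so that the total stays strictly below $k/2$ proves that, with probability at least $1-\delta$, for every unit $v$ at most $k/2-1$ blocks violate the inequality, i.e., $\mu$ defeats every $b$ with $\|b-\mu\|\geq r$.
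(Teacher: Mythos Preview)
Your reduction and the expectation bound are correct, and the route is genuinely different from the paper's. The paper covers $r\cdot S^{d-1}$ by an $\epsilon$-net in the $L_2(X)$ norm (bounding its cardinality via the dual Sudakov inequality), controls each net point by Chebyshev-on-a-block plus Hoeffding-across-blocks with a union bound, and then handles the oscillation between an arbitrary $x$ and its nearest net point via symmetrization, contraction, and de-symmetrization. You bypass the net entirely: the Lipschitz surrogate $\phi$ lets the Ledoux--Talagrand contraction principle reduce the full supremum directly to the Rademacher average $\EXP\bigl\|\sum_j\varepsilon_j(Z_j-\mu)\bigr\|$, which is arguably cleaner since no covering-number machinery enters.

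Your concentration step, however, contains a misdiagnosis. The supremum $\sup_{\|v\|=1}\sum_{j=1}^k W_j(v)$ depends on the sample only through the $k$ \emph{independent} block means $Z_1,\ldots,Z_k$, and altering a single $Z_j$ changes the supremum by at most~$1$. The bounded-differences inequality applied \emph{at the block level} therefore gives deviations of order $\sqrt{k\log(1/\delta)}\approx\sqrt{200}\,\log(2/\delta)$, which is comfortably below $k/2\approx 100\log(2/\delta)$. (Your $\sqrt{n\log(1/\delta)}$ arises from forgetting that the function factors through the block means and treating it as a function of the $n$ raw samples.) This block-level bounded-differences step is precisely what the paper uses for its oscillation term; invoking Talagrand--Bousquet here is correct but unnecessary.
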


The outcome of Lemma \ref{lem:tournament} stands to reason: if $\|b - \mu\|$ is large enough, that will be reflected in `typical values' of $(\|X_i-\mu\|)_{i=1}^n$ and $(\|X_i-b\|)_{i=1}^n$. Comparing the values via \eqref{eq:l-2-tournament} ensures `stability', and the fact that $b$ is far from $\mu$ is exhibited with high probability. We stress that the probability estimate has to be \emph{uniform} in $b$. Such uniform estimates are a recurring theme in what follows.

\begin{proof}
Note that
\[
\|X_i-b\|^2 - \|X_i-\mu\|^2 = \|X_i-\mu+\mu-b\|^2 - \|X_i-\mu\|^2 = -2\inr{X_i-\mu,b-\mu}+\|b-\mu\|^2~,
\]
set $\ol{X}=X-\mu$ and put $v=b-\mu$. Thus, for a fixed $b$ that satisfies $\|b-\mu\| \geq r$, $\mu$ defeats $b$ if
\[
-\frac{2}{m}\sum_{i \in B_j} \inr{\ol{X}_i,v}+\|v\|^2>0
\]
on the majority of blocks $B_j$.

Therefore, to prove our claim we need that, with probability at least $1-\delta$, for every $v \in \R^d$ with $\|v\| \geq r$,
\begin{equation} \label{eq:basic}
-\frac{2}{m}\sum_{i \in B_j} \inr{\ol{X}_i,v}+\|v\|^2>0
\end{equation}
for more than $k/2$ blocks $B_j$.
Clearly, it suffices to show that \eqref{eq:basic} holds when $\|v\|=r$.

Consider a fixed $v \in \R^d$ with $\|v\|=r$. By Chebyshev's inequality, with probability at least $9/10$,
\[
\left|\frac{1}{m}\sum_{i \in B_j} \inr{\ol{X}_i,v}\right| \leq \sqrt{10} \sqrt{\frac{\EXP \inr{\ol{X},v}^2}{m}} \leq \sqrt{10} \|v\|\sqrt{\frac{\lambdamax}{m}}~,
\]
where recall that $\lambdamax$ is the largest eigenvalue of the covariance matrix $\Sigma$ of $X$. Thus, if
\begin{equation} \label{eq:r-cond-1}
r=\|v\| \ge 4\sqrt{10} \sqrt{\frac{\lambdamax}{m}}
\end{equation}
then with probability at least $9/10$,
\begin{equation} \label{eq:basic-1}
-\frac{2}{m}\sum_{i \in B_j} \inr{\ol{X}_i,v} \geq \frac{-r^2}{2}.
\end{equation}
Applying Hoeffding's inequality (\cite{Hoe63}), we see that
\eqref{eq:basic-1} holds for a single $v$ with probability at least
$1-\exp(-k/50)$ on at least $8/10$ of the blocks $B_j$.

Now we need to extend the above from a fixed vector $v$ to all
vectors with norm $r$. In order to show that
\eqref{eq:basic-1} holds simultaneously for all $v\in r\cdot S^{d-1}$
on at least $7/10$ of the blocks $B_j$, we first consider a
maximal $\epsilon$-separated set $V_1 \subset r\cdot S^{d-1}$
with respect to the $L_2(X)$ norm. In other words, $V_1$ is a subset of
$r\cdot S^{d-1}$ of maximal cardinality such that for all $v_1,v_2\in V_1$,
$\|v_1-v_2\|_{L_2(X)}= \inr{v_1-v_2,\Sigma(v_1-v_2)}^{1/2} \ge \epsilon$. We may estimate this cardinality
by the ``dual Sudakov'' inequality
(see Ledoux and Talagrand \cite{LeTa91} and also Vershynin\cite{Ver09}
for a version with the specific constant used here):
%(see \cite{LeTa91} and also \cite{Ver09} for a version with the specified constant)
the cardinality of $V_1$ is bounded by
\[
\log (|V_1|/2) \leq \frac{1}{32} \left(\frac{\EXP\left[\inr{G,\Sigma G}^{1/2}\right]}{\epsilon/r}\right)^2~,
\]
where $G$ is a standard normal vector in $\R^d$.
Notice that for any $a \in \Rd$, $\EXP_X \inr{a,X}^2 = \inr{a,\Sigma a}$, and therefore,
\begin{eqnarray*}
\EXP\left[\inr{G,\Sigma G}^{1/2}\right]
& = & \EXP_G \left[\left(\EXP_X
    \left[\inr{G,\ol{X}}^2\right] \right)^{1/2}\right]
\leq \left(\EXP_X \EXP_G \left[\inr{G,\ol{X}}^2 \right] \right)^{1/2} \\
& = & \left(\EXP\left[ \left\|\ol{X}\right\|^2\right]\right)^{1/2} = \sqrt{\Tr(\Sigma)}~.
\end{eqnarray*}
%where $\lambda_1,\ldots,\lambda_d$ are the eigenvalues of $\Sigma$.
Hence, by setting
\begin{equation} \label{eq:mesh}
\epsilon = 2 r \left(\frac{1}{k}\Tr(\Sigma)\right)^{1/2}~,
\end{equation}
we have $|V_1|\le 2e^{k/100}$ and by the union bound,
with probability at least $1-2e^{-k/100}\ge 1-\delta/2$,
\eqref{eq:basic-1} holds for all $v\in V_1$
on at least $8/10$ of the blocks $B_j$.

Next we check that property \eqref{eq:basic} holds simultaneously for all $x$ with
$\|x\|=r$ on at least $7/10$ of the blocks $B_j$.

For every $x \in r \cdot S^{d-1}$, let $v_x$ be the nearest element to
$x$ in $V_1$ with respect to the $L_2(X)$ norm.
It suffices to show that, with probability at least $1-\exp(-k/200)\ge 1-\delta/2$,
\begin{equation} \label{eq:osc}
\sup_{x \in r\cdot S^{d-1}} \frac{1}{k} \sum_{j=1}^k \IND_{\{|m^{-1}\sum_{i \in B_j} \inr{\ol{X}_i,x-v_x}| \geq r^2/4\}} \leq \frac{1}{10}~.
\end{equation}
Indeed, on that event it follows that for every $x \in r\cdot S^{d-1}$, on at least $7/10$ of the blocks $B_j$, both
$$
-\frac{2}{m} \sum_{i \in B_j} \inr{\ol{X}_i,v_x} \geq \frac{-r^2}{2} \ \ \ {\rm and} \ \ \ 2\left|\frac{1}{m} \sum_{i \in B_j} \inr{\ol{X}_i,x}-\frac{1}{m} \sum_{i \in B_j} \inr{\ol{X}_i,v_x}\right| < \frac{r^2}{2}
$$
hold and hence, on those blocks, $-\frac{2}{m} \sum_{i \in B_j}
\inr{\ol{X}_i,x} +r^2>0$ as required.

It remains to prove \eqref{eq:osc}. Observe that
\[
\frac{1}{k} \sum_{j=1}^k \IND_{\{|m^{-1}\sum_{i \in B_j} \inr{\ol{X}_i,x-v_x}| \geq r^2/4\}} \leq \frac{4}{r^2} \frac{1}{k}\sum_{j=1}^k \left|\frac{1}{m}  \sum_{i \in B_j} \inr{\ol{X}_i,x-v_x} \right|~.
\]
Since $\|x-v_x\|_{L_2(X)} = \sqrt{\EXP \inr{\ol{X},x-v_x}^2} \leq \epsilon$ it follows that for every $j$
\[
\EXP \left|\frac{1}{m}  \sum_{i \in B_j} \inr{\ol{X}_i,x-v_x} \right|
\leq \sqrt{\frac{\EXP\left[\inr{\ol{X},x-v_x}^2\right]}{m}}
\leq \frac{\epsilon}{\sqrt{m}}~,
\]
and therefore,
\begin{eqnarray*}
\lefteqn{
  \EXP \sup_{x \in r\cdot S^{d-1}} \frac{1}{k} \sum_{j=1}^k \IND_{\{|m^{-1}\sum_{i \in B_j} \inr{\ol{X}_i,x-v_x}| \geq r^2/4\}} } \\
& \leq &
\frac{4}{r^2} \EXP \sup_{x \in r\cdot S^{d-1}} \frac{1}{k}\sum_{j=1}^k \left(\left|\frac{1}{m}  \sum_{i \in B_j} \inr{\ol{X}_i,x-v_x} \right| - \EXP \left|\frac{1}{m}  \sum_{i \in B_j} \inr{\ol{X}_i,x-v_x} \right|\right) +  \frac{4\epsilon}{r^2\sqrt{m}} \\
& \defeq & (A)+(B)~.
\end{eqnarray*}
To bound $(B)$, note that, by \eqref{eq:mesh},
\[
\frac{4\epsilon}{r^2\sqrt{m}} = 8 \left(\frac{\Tr(\Sigma)}{n}\right)^{1/2} \cdot \frac{1}{r} \leq \frac{1}{60}
\]
provided that
\[
r \geq 480 \left(\frac{\Tr(\Sigma)}{n}\right)^{1/2}~.
\]
We may bound $(A)$ by standard techniques of empirical processes such as
symmetrization, contraction for Rademacher averages and de-symmetrization. Indeed,
 let $\sigma_1,\ldots,\sigma_n$ be independent Rademacher random variables
(i.e., $\PROB\{\sigma_i=1\}=\PROB\{\sigma_i=-1\}=1/2$), independent of all of the $X_i$. Then
%by symmetrization, contraction for Bernoulli processes and de-symmetrization (see, e.g., \cite{LeTa91}), and noting that $\|x-v_x\| \le 2r$, we have
\begin{eqnarray*}
(A) & \leq &
\frac{8}{r^2} \EXP \sup_{x \in r\cdot S^{d-1}} \frac{1}{k}\sum_{j=1}^k \sigma_j \left|\frac{1}{m}  \sum_{i \in B_j} \inr{\ol{X}_i,x-v_x} \right|
\\
& & \text{(by a standard symmetrization inequality, see, e.g., \cite[Lemma 2.3.6]{vaWe96})} \\
& \le &
\frac{8}{r^2} \EXP \sup_{x \in r\cdot S^{d-1}} \left| \frac{1}{k}\sum_{j=1}^k \sigma_j \frac{1}{m}  \sum_{i \in B_j} \inr{\ol{X}_i,x-v_x} \right|
\\
& & \text{(by a contraction lemma for Rademacher averages, see \cite{LeTa91})} \\
& \le &
\frac{16}{r^2} \EXP \sup_{x \in r\cdot S^{d-1}} \left| \frac{1}{n}\sum_{i=1}^n  \inr{\ol{X}_i,x-v_x} \right|
\\
& & \text{(see again \cite[Lemma 2.3.6]{vaWe96})} \\
& \le &
\frac{32}{r} \EXP \sup_{\{t:\|t\|\le 1\}} \left|\frac{1}{n} \sum_{i=1}^n \inr{\ol{X}_i,t}\right| \\
& & \text{(noting that $\|x-v_x\|\le 2r$)} \\
& \leq &  \frac{32}{r} \cdot \frac{\sqrt{\EXP\left\|\ol{X}\right\|^2}}{\sqrt{n}}
= \frac{32}{r} \left(\frac{\Tr(\Sigma)}{n} \right)^{1/2}
\leq \frac{1}{30}
\end{eqnarray*}
provided that
$r \geq 960 \left(\frac{\Tr(\Sigma)}{n}\right)^{1/2}.$

Thus, for
\[
Y=  \sup_{x \in r\cdot S^{d-1}} \frac{1}{k} \sum_{j=1}^k \IND_{\{|m^{-1}\sum_{i \in B_j} \inr{\ol{X}_i,x-v_x}| \geq r^2/4\}}~,
\]
we have proved that $\EXP Y \le 1/60+1/30= 1/20$. Finally, in order to establish \eqref{eq:osc}, it suffices to show that,
$\PROB\{ Y > \EXP Y + 1/20\}\le e^{-k/200}$, which follows from the
bounded differences inequality
(see, e.g., \cite[Theorem 6.2]{BoLuMa13}).
\end{proof}

\subsubsection*{Proof of Theorem \ref{thm:geometry}}

Theorem \ref{thm:geometry} is easily derived from Lemma \ref{lem:tournament}.
Fix a block $B_j$, and recall that $Z_j=\frac{1}{m}\sum_{i \in B_j}X_i$.
Let $a,b \in \R^d$. Then
\begin{eqnarray*}
\frac{1}{m}\sum_{i \in B_j} \left(\|X_i-a\|^2- \|X_i -b\|^2 \right)
& = & \frac{1}{m}\sum_{i \in B_j} \left( \|X_i-b-(a-b)\|^2- \|X_i
  -b\|^2 \right)
\\
& = & -\frac{2}{m}\sum_{i \in B_j} \inr{X_i-b,a-b} + \|a-b\|^2 = (*)
\end{eqnarray*}
Observe that $-\frac{2}{m}\sum_{i \in B_j} \inr{X_i-b,a-b} = -2\inr{\frac{1}{m}\sum_{i \in B_j} X_i -b,a-b}=-2\inr{Z_j-b,a-b}$, and thus
\begin{eqnarray*}
(*) &= & -2\inr{Z_j-b,a-b} + \|a-b\|^2 \\
& = & -2\inr{Z_j-b,a-b} + \|a-b\|^2 + \|Z_j-b\|^2 - \|Z_j-b\|^2 \\
& = & \|Z_j-b - (a-b)\|^2 - \|Z_j-b\|^2 =
\|Z_j-a\|^2-\|Z_j-b\|^2~.
\end{eqnarray*}
Therefore, $(*)>0$ (i.e., $b$ defeats $a$ on block $B_j$)
if and only if $\|Z_j-a\| > \|Z_j-b\|$.

Recall that Lemma \ref{lem:tournament} states that, with probability at least $1-\delta$,
if $\|a-\mu\| \geq r$ then on more than $k/2$ blocks $B_j$, $\frac{1}{m}\sum_{i \in B_j} \left(\|X_i-a\|^2- \|X_i -\mu\|^2 \right) >0$, which, by the above argument, is the same as saying that for at least $k/2$ indices $j$,
$\|Z_j-a\| > \|Z_j-\mu\|$.
\endproof

Upon reflection it is clear that the ideas used in the proof of
Theorem \ref{thm:mmtournament} are rather general. In fact, they are
at the heart of the \emph{small-ball method} introduced in 
Mendelson \cite{Men15} (see also \cite{Men18} for results of similar flavour). The
small-ball method holds in far more general situations than Theorem
\ref{thm:mmtournament} and will be repeated throughout this note. To
explain how the argument can be extended, let us outline again the
three steps that allowed us to compare every $b$ and $\mu$:
\begin{description}
\item{$(1)$} For any \emph{fixed} $b \in \R^d$ we obtain a bound that holds with high probability;
\item{$(2)$} Then, thanks to the high probability estimate from $(1)$, we invoke the union bound and control a large (yet finite) collection of points.

    We have complete freedom to choose the collection as we want, and we select it as an $\epsilon$-net in the set in question.
\item{$(3)$} The crucial part of the argument is passing from the control we have on every point in the net to the wanted uniform control on entire class; specifically, we show that if a `center', that is, an element of the net, is well-behaved\footnote{in the proof of Theorem \ref{thm:mmtournament}, `well-behaved' means that \eqref{eq:basic} holds for a majority of the blocks.}, then the same is true for any point close enough to the center. To that end, we show that `random oscillations' do not destroy the good behaviour of a center on too many blocks.
\end{description}

% Similar tricks have been used by Klartag and Mendelson \cite{KlMe05}, 
% Baraniuk, Davenport, DeVore, and Wakin \cite{BaDaDeWa08}, see also
% Koltchinskii \cite{Kolt08} and Massart \cite{Mas06}.

\subsection{Computational considerations}

An important issue that we have mostly swept under the rug so far is
computational feasibility of mean estimators. While the empirical mean 
is trivial to compute, many of the more sophisticated estimators
discussed here are far from being so. In particular, a basic
requirement for any multivariate mean estimator for having a chance to
being useful in practice is that it can be computed in polynomial time
(i.e., in time that is a polynomial of the sample size $n$ and the
dimension $d$). As we already pointed it out, some of the estimators described above fall in this 
category. For example, the geometric median-of-means estimator or the
Catoni-Giulini estimator are both efficiently computable in this
sense. However, these estimators fall short from being sub-Gaussian. 
The median-of-means tournament estimator is sub-Gaussian but
its computation poses a highly nontrivial challenge. In fact, the way 
the estimator is defined, it is likely to be computationally
intractable (i.e., NP hard). However, in a recent beautiful paper,
Hopkins \cite{Hop18} defines a clever semi-definite relaxation of the
median-of-means tournament estimator that can be computed in time
$O(nd + d\log(1/\delta)^c)$ for a dimension-independent constant and, at the same time, achieves the 
desired sub-Gaussian guarantee under the only assumption that the
covariance matrix exists. This is the first efficiently computable
sub-Gaussian multivariate mean estimator. 
Even more recently, Cherapanamjeri, Flammarion, and Bartlett \cite{ChFlBa19}
improved the running time to $O(nd + d\log(1/\delta)^2 + \log(1/\delta)^4)$
by combining Hopkins' ideas with clever gradient-descent optimization.
This is likely not the last
word on the subject as many exciting computational challenges arise 
in the context of mean estimation and regression. 

In the theoretical computer science community there has been a recent 
important surge of results that address the problem of
computationally efficient \emph{robust} mean estimation. In this
context, an estimator is defined to be robust if it performs well 
in the presence of a small constant fraction of  (possibly
adversarial) outliers. Various different models have been introduced,
see
Charikar, Steinhardt, and Valiant \cite{ChStVa17},
Diakonikolas, Kamath, Kane, Li, Moitra, and Stewart
\cite{DiKaKaLiMoSt16,DiKaKaLiMoSt17,DiKaKaLiMoSt18},
Diakonikolas, Kane, and Stewart \cite{DiKaSt16},
Diakonikolas, Kong, and Stewart \cite{DiKoSt18},
Hopkins and Li \cite{HoLi18},
Klivans, Kothari, and Meka \cite{KlKoMe18},
Kothari, Steinhardt, and Steurer \cite{KoStSt18},
Lai, Rao, and Vempala \cite{LaRaVe16},
Loh and Tan \cite{LoTa18},
for a sample of this important growing body of literature.
Surveying this area goes beyond the scope of this paper.

\section{Uniform median-of-means estimators}
\label{sec:gennorm}

The median-of-means tournament used in the previous section is an
example of a uniform median-of-means estimator: given a class of
functions $\F$, there is a high-probability event on which, for every
$f$ in the class, the median of means estimator based on the data $f(X_1),\ldots,f(X_n)$ is close to the mean
$\E f(X)$.  Indeed, the tournament is simply a median-of-means estimator
that was used to check whether $a$ was closer to $\mu$ than $b$, or
vice-versa, uniformly for every pair $a,b \in \R^n$.

In what follows we present a general version of a uniform
median-of-means estimator and turn our attention to two applications:
estimating the mean of a random vector with respect to an arbitrary
norm, and $L_2$-distance oracles (the latter proves useful
in regression problems, see Section \ref{sec:distance-oracle} and
\cite{LuMe16} for more details).

Formally, the question we consider is as follows:

\begin{tcolorbox}
Let $\F$ be a class of functions on a probability space
$(\Omega,\nu)$. Given an independent  sample $(X_1,\ldots,X_n)$
distributed according to $\nu$, find an estimator $\wh{\phi}_n(f)$ for
each $f\in \F$, such that, with high probability, for every $f \in \F$, $|\wh{\phi}_n(f) - \E f(X) |$ is small.
\end{tcolorbox}

A natural idea is to define $\wh{\phi}_n(f)$ to be the median-of-means
estimator based on $f(X_1),\ldots,f(X_n)$.
It stands to reason that the bound established in Section
\ref{sec:MOM} for the performance of the median-of-means estimator
cannot simply hold uniformly for every $f \in \F$.
Rather, the uniform error consists of two terms: the `worst'
individual estimate for a function $f \in \F$, and a `global' error,
taking into account the `complexity' of the class.

To analyze uniform median-of-means estimators, it is natural to
follow the path of the small-ball method outlined in the previous
section. 
To this end, fix integers $k$ and $m$ and let
$n=mk$. As always, we split the given sample into $k$ blocks, each one
of cardinality $m$, keeping in mind that the natural choice is
$k \sim \log(2/\delta)$ if one wishes a confidence of $1-\delta$. For
$0<\eta<1$ set
$$
p_m(\eta) = \sup_{f \in \F} \PROB\left(\left|\frac{1}{m}\sum_{i=1}^m f(X_i)-\E f(X) \right|\geq \eta \right)~,
$$
denote by $D=\left\{f: \EXP f(X)^2\le 1\right\}$ the unit ball in $L_2(\nu)$ and let ${\cal M}(\F,rD)$ be the maximal cardinality of a subset of $\F$ that is $r$-separated with respect to the $L_2(\nu)$ norm.

The following bound was recently established in \cite{LuMe18a}.

\begin{theorem} \label{thm:uniform-MOM}
There exist absolute constants $c_0,\ldots,c_4$ for which the following holds. Set $\eta_0,\eta_1$ and $\eta_2 \geq c_0 \eta_1/\sqrt{m}$ that satisfy the following:
\begin{description}
\item{$(1)$} $p_m(\eta_0) \leq 0.05$~;
\item{$(2)$} $\log {\cal M}(\F,\eta_1 D) \leq c_2 k \log(e/p_m(\eta_0))$~; 
\item{$(3)$} $\E \sup_{w \in \ol{W}} \left|\sum_{i=1}^n \eps_i w(X_i) \right| \leq c_3 \eta_2 n$~,
\end{description}
where $\eps_1,\ldots,\eps_n$ are independent Rademacher random
variables (i.e., $\PROB\{\eps_i=1\}=\PROB\{\eps_i=-1\}=1/2$) and 
 $W=(\F-\F) \cap \eta_1 D = \{f_1-f_2 : f_1,f_2 \in \F, \ \|f_1-f_2\|_{L_2} \leq \eta_1\}$ and $\ol{W}=\{w -\E w : w \in W\}$.
Let $r=\eta_0+\eta_2$. Then, with probability at least $1-2\exp(-c_4k)$, for all $f \in \F$ one has
$$
\left| \frac{1}{m} \sum_{i \in B_j} f(X_i) - \E f  \right| \leq r \ \ {\rm for \ at \ least \ } 0.6k \ {\rm blocks \ } B_j~.
$$
\end{theorem}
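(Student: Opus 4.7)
The proof follows precisely the three-step small-ball scheme outlined at the end of Section~\ref{sec:mmt}: (i) establish a fixed-$f$ bound for the block means, (ii) upgrade it by a union bound to a sufficiently rich $\eta_1$-net $\F_0$ of $\F$ in $L_2(\nu)$, (iii) control random oscillations uniformly over the localized difference set $W$ so that every $f\in\F$ inherits the net-level bound from its nearest neighbor.

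First I would fix $f \in \F$. By assumption (1) the block means $Z_j(f)=\frac{1}{m}\sum_{i\in B_j} f(X_i)$ are $\eta_0$-close to $\E f$ with probability at least $0.95$. Writing the indicator of a ``bad'' block as a Bernoulli variable and applying Hoeffding's (or binomial tail) inequality, the fraction of bad blocks exceeds (say) $0.1$ only with probability at most $\exp(-c'k\log(e/p_m(\eta_0)))$ for an absolute $c'>0$. Invoking assumption (2) and taking a union bound over a maximal $\eta_1$-separated set $\F_0\subset\F$ (of cardinality $|\F_0|\le \exp(c_2 k\log(e/p_m(\eta_0)))$) gives, on an event $\mathcal{E}_1$ of probability at least $1-\exp(-c_4 k)$,
\[
\#\bigl\{j\le k:\ |Z_j(f_0)-\E f_0|>\eta_0\bigr\}\le 0.1k\qquad\text{for every }f_0\in\F_0.
\]

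Next I would handle the oscillations uniformly over $W=(\F-\F)\cap\eta_1 D$. For any $f\in\F$ there is $f_0\in\F_0$ with $\|f-f_0\|_{L_2}\le\eta_1$, so $f-f_0\in W$, and writing $\bar w=w-\E w$ gives $Z_j(w)-\E w=Z_j(\bar w)$. The goal is to show that, with probability at least $1-\exp(-c_4 k)$,
\[
\Psi \defeq \sup_{w\in W}\frac{1}{k}\sum_{j=1}^k \IND_{\{|Z_j(\bar w)|\ge\eta_2\}}\le 0.3~.
\]
Markov's inequality in the Bernoulli form $\IND_{\{|x|\ge\eta_2\}}\le |x|/\eta_2$ yields $\Psi\le \eta_2^{-1}\sup_{w\in\ol W}\frac{1}{k}\sum_j |Z_j(w)|$. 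Using symmetrization for the centered class $\ol W$, followed by the Rademacher contraction lemma applied to the $1$-Lipschitz map $t\mapsto|t|$ (which vanishes at $0$), then bounding block Rademacher sums by the per-observation Rademacher process, the expectation is bounded by an absolute constant times $\eta_2^{-1}\cdot n^{-1}\E\sup_{w\in\ol W}|\sum_{i=1}^n\eps_i w(X_i)|$. Assumption (3) chosen with a sufficiently small constant $c_3$ then forces $\E\Psi\le 0.15$, say. The condition $\eta_2\ge c_0\eta_1/\sqrt{m}$ guarantees each summand of $\Psi$ is bounded by $1$ and has block-wise variance of controlled size, so the bounded-differences inequality (changing one of the $n$ data points alters $\Psi$ by at most $1/k$) gives $\Psi\le \E\Psi+0.15$ on an event $\mathcal{E}_2$ of probability at least $1-\exp(-c_4 k)$.

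Finally, on $\mathcal{E}_1\cap\mathcal{E}_2$ I would combine the two ingredients. For any $f\in\F$, pick $f_0\in\F_0$ with $\|f-f_0\|_{L_2}\le\eta_1$; for every block $B_j$,
\[
\bigl|Z_j(f)-\E f\bigr|\le \bigl|Z_j(f_0)-\E f_0\bigr|+\bigl|Z_j(\bar w)\bigr|,\qquad w=f-f_0\in W.
\]
At most $0.1k$ blocks violate the first bound and at most $0.3k$ violate the second, so on at least $0.6k$ blocks both hold and hence $|Z_j(f)-\E f|\le \eta_0+\eta_2=r$. The final failure probability is at most $2\exp(-c_4 k)$.

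The main obstacle will be step (iii): aligning all absolute constants so that assumption~(3) really yields $\E\Psi\le 0.15$ (rather than some value $\ge 0.3$), which requires careful bookkeeping through the symmetrization, the contraction step for $|\cdot|$, and the comparison between block-wise and per-point Rademacher processes; the side condition $\eta_2\ge c_0\eta_1/\sqrt m$ is precisely what ensures that the per-block summands behind $\Psi$ are non-degenerate so that bounded differences (with bounded increments $1/k$) turns the expectation bound into the desired high-probability one.
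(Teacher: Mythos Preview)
Your three-step scheme is exactly the one the paper sketches (and attributes in detail to \cite{LuMe18a}): individual control via $p_m(\eta_0)\le 0.05$ plus a binomial tail, a union bound over an $\eta_1$-net whose log-cardinality is $O(k\log(e/p_m(\eta_0)))$, and an oscillation step handled by the indicator-to-absolute-value trick, symmetrization/contraction, and bounded differences---just as in the proof of Lemma~\ref{lem:tournament}. So the overall plan is correct and matches the paper.

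There is, however, one concrete mis-step in your oscillation argument: you misplace the role of the side condition $\eta_2\ge c_0\eta_1/\sqrt{m}$. After the bound $\Psi\le \eta_2^{-1}\sup_{w\in\ol W}\frac{1}{k}\sum_j|Z_j(w)|$, you cannot symmetrize $|Z_j(w)|$ directly, since $\E|Z_j(w)|\neq 0$ even though $w\in\ol W$ is centered. One must first split off the bias term $\sup_{w\in\ol W}\E|Z_1(w)|\le \eta_1/\sqrt{m}$ (exactly the term $(B)$ in the proof of Lemma~\ref{lem:tournament}), and it is \emph{this} term, divided by $\eta_2$, that the condition $\eta_2\ge c_0\eta_1/\sqrt{m}$ is designed to make small. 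The bounded-differences step, by contrast, needs no such condition: each indicator summand is automatically in $[0,1]$, and changing one observation alters $\Psi$ by at most $1/k$, giving $\PROB\{\Psi>\E\Psi+0.15\}\le \exp(-ck)$ unconditionally. Once you relocate the side condition to the bias term, the rest of your outline goes through verbatim.
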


The error $r$ in Theorem \ref{thm:uniform-MOM} has the two terms we
expected. 
Indeed, $\eta_0$ is error one would have if the goal were to obtain an
individual mean estimator for a fixed function in $\F$: 
writing $\sigma_f= \sqrt{\var(f(X))}$, by Chebyshev's inequality, for every $f \in \F$,
$$
\PROB\left(\left|\frac{1}{m}\sum_{i=1}^m f(X_i)-\E f \right|\geq \eta_0 \right) \leq \frac{\sigma_f^2}{m \eta_0^2} \leq 0.05
$$
provided that
$$
\eta_0 \gtrsim \frac{\sigma_f}{\sqrt{m}} \sim \sigma_f \sqrt{\frac{\log(2/\delta)}{n}}~.
$$
As outlined in Section \ref{sec:MOM}, this leads to the standard sub-Gaussian error estimate for the function $f \in \F$.
On the other hand, $\eta_2$ involves the Rademacher averages
associated with $\F-\F$, and captures the price one has to pay for the
uniform control over the class $\F$.

The proof of Theorem \ref{thm:uniform-MOM} follows the same path we
outlined previously: the definition of $p_m$ allows us to show that
the empirical mean of $f$ on a block $B_j$ of cardinality $m$ is close
to the true mean with reasonable probability, say, larger than
$0.95$. Thus, with probability $1-e^{-ck}$, this
property is satisfied on $0.9k$ blocks. Next, the high-probability
estimate combined with the union bound allow us to control all the
elements in a finite set uniformly, as long as its cardinality is at
most exponential in $k$. The set of choice is an appropriate net in
$\F$ and its mesh width $\eta_1$ is selected to ensure that the cardinality
of the net is small enough. Finally, as always, the crucial component
is to ensure that oscillations do not `corrupt' the good behaviour on 
too many blocks. Since our interest is in the median of means, one can
live with up to $0.4k$ of the blocks being corrupted, and the
additional error of $\eta_2$ suffices to guarantee that indeed no more
than $0.4k$ blocks are affected.

The technical analysis can be found in \cite{LuMe18a}, 
where Theorem \ref{thm:uniform-MOM} is used for the study the problem
of multivariate mean estimation with respect to a general norm,
outlined in the next section.

We mention here that uniform estimators based on Catoni's mean estimator
were studied by Brownlees, Joly, and Lugosi \cite{BrJoLu15} in the
context of regression function estimation.
Minsker \cite{Min18b} discusses uniform estimators in a similar spirit
to those presented here, also for adversarially contaminated data.

\subsection{Multivariate mean estimation---the general case} 
\label{sec:vector-MOM-general}

To illustrate the power of the uniform median-of-means bounds
established in the previous section, we now return to the problem of 
estimating the mean of a random vector. As before, let
$X_1,\ldots,X_n$ be independent, identically distributed random
vectors in $\R^d$ with mean $\mu$ and covariance matrix $\Sigma$.
The question we seek to answer is to what extent one can estimate 
$\mu$ when the error is measured by a given norm $\|\cdot\|$ that is
not necessarily the Euclidean norm. 
An important example is the matrix operator norm, see  Minsker
\cite{Min18}, Catoni and Giulini \cite{CaGi17},
Mendelson and Zhivotovskiy \cite{MeZh18}.

One may now cast this general mean estimation problem in the framework of
uniform median-of-means estimators outlined above. 
The natural class of functions associated with the problem is the
unit ball with respect to the dual of the norm $\|\cdot\|$ (i.e., the set of norm-one linear functionals). 
The natural choice of a measure $\nu$ is the one induced by $X-\mu$.

Consider the event given by Theorem \ref{thm:uniform-MOM} for this
class of functions and denote the resulting error by $r$. It follows
that for each norm-one functional $x^*$, we have $\E x^*(X-\mu)=0$ and
$$
\left|\frac{1}{m} \sum_{i \in B_j} x^*(X_j-\mu)\right| \leq r
$$
for a majority of the  blocks $B_j$. Moreover,
$$
\frac{1}{m} \sum_{i \in B_j} x^*(X_i - \mu) = x^* \bigl(\frac{1}{m}\sum_{i \in B_j} X_i\bigr)- x^*(\mu)~.
$$
Thus, setting $Z_j = \frac{1}{m}\sum_{i \in B_j} X_j$, Theorem \ref{thm:uniform-MOM} implies that for every norm-one functional $x^*$,
$$
|x^*(Z_j) - x^*(\mu)| \leq r~.
$$
In other words, if we define the sets
$$
\mathbbm{S}_{x^*} = \left\{ y \in \R^d : |x^*(Z_j) - x^*(y) | \leq r \ \ {\rm for \ the \ majority \ of \ indices \ } j \right\}
$$
then on the event from Theorem \ref{thm:uniform-MOM} one has that
$$
\mu \in \mathbbm{S}= \bigcap_{\|x^*\|=1} \mathbbm{S}_{x^*}~.
$$
From a geometric point of view, each set $\mathbbm{S}_{x^*}$ is the union of intersection of slabs: setting $\alpha_j = x^*(Z_j)$,
$$
\mathbbm{S}_{x^*}= \bigcup_{|I| \geq [k/2]+1} \bigcap_{i \in I} \{ y : |x^*(y) - \alpha_j| \leq r\}~,
$$
which is just a union of (potentially empty) slabs, defined by the
functional $x^*$. The set $\mathbbm{S}$ is the resulting intersection
of the sets $\mathbbm{S}_{x^*}$. Off hand, there is no reason why the
intersection of the sets $S_{x^*}$ should not be empty. The fact that it
contains $\mu$ is only due to the special nature of the $Z_j$'s.

Since each set $\mathbbm{S}_{x^*}$ is data-dependent, so is
$\mathbbm{S}$. With that in mind, the estimator we propose is obvious:
set $\wh{\mu}_n^{(r)}$ to be any point in $\mathbbm{S}$. To show that
$\|\wh{\mu}_n^{(r)}-\mu\| \leq 2r$, fix any norm-one functional
$x^*$. Recall that if $ y \in \mathbbm{S}$ then
$|x^*(Z_j)-x^*(y)| \leq r$ on the majority of blocks. Therefore, if
$\mu,\wh{\mu}_n^{(r)} \in \mathbbm{S}$ there is some index $j$ such that,
simultaneously,
$$
|x^*(Z_j)-x^*(\wh{\mu}_n^{(r)})| \leq r \ \ \ {\rm and} \ \ \ |x^*(Z_j)-x^*(\mu)| \leq r~,
$$
and therefore $|x^*(\wh{\mu}^{(r)}-\mu)| \leq 2r$. Thanks to Theorem \ref{thm:uniform-MOM}, there is a high-probability event on which this is true for any norm-one functional, and, in particular,
$$
\|\wh{\mu}_n^{(r)} - \mu\| = \sup_{\|x^*\|=1} |x^*(\wh{\mu}_n^{(r)}-\mu)| \leq 2r~.
$$

\begin{remark}
  It is straightforward to verify that there is no need to select $\F$
  to be the set of all the norm-one linear functionals. It is enough to define
  $\mathbbm{S}$ using the functionals that are extreme points of the
  unit ball in the dual space.
\end{remark}

Thanks to Theorem \ref{thm:uniform-MOM} and the argument we just outlined, the following was established in \cite{LuMe18a}:

\begin{theorem} \label{thm:norm-estimation}
Let $\| \cdot \|$ be a norm on $\R^d$. Suppose that the $X_i$ have
mean $\mu$ and covariance matrix $\Sigma$. There exists a mean estimator
$\wh{\mu}_n$ such that,
with probability at least $1-\delta$,
\[
\|\wh{\mu}_n - \mu\| \leq \frac{c}{\sqrt{n}} \left( \max\left\{\E\|\zeta_n\|, \ \ \E \|G\| + R \sqrt{\log(2/\delta)} \right\}\right)~,
\]
where $c$ is a numerical constant, 
$$
R=\sup_{\|x^*\|=1} \left(\E (x^*(X-\mu))^2\right)^{1/2}~,
$$
$$
\zeta_n = \frac{1}{\sqrt{n}} \sum_{i=1}^n \eps_i(X_i -\mu)~,
$$
$\{\epsilon_i\}$ is a sequence of i.i.d.\ Rademacher random variables independent of $\{X_i\}$,
and $G$ is the centered Gaussian vector with covariance
matrix $\Sigma$. 
% Here $(\eps_i)_{i=1}^n$ are independent random signs,
% that is, symmetric, $\{-1,1\}$-valued random variables that are also independent of $(X_i)_{i=1}^n$.
\end{theorem}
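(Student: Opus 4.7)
The plan is to invoke the uniform median-of-means bound of Theorem \ref{thm:uniform-MOM} with a carefully chosen class $\F$, and then apply the slab-intersection construction already sketched in the excerpt in order to build $\wh{\mu}_n$. Take $\F$ to be the set of extreme points of the unit ball of the dual norm $\|\cdot\|_*$; equivalently, $f(x)=x^*(x)$ for $\|x^*\|_*=1$. Let $\nu$ be the law of $X-\mu$, so that $\E f(X-\mu)=0$ for every $f\in\F$ and $\sup_{f\in \F} \E f(X-\mu)^2 = R^2$. Setting $k=\lceil c\log(2/\delta)\rceil$, $m=n/k$, and once the appropriate $r=\eta_0+\eta_2$ is in hand, one copies the slab construction: the estimator $\wh{\mu}_n$ is any point in $\mathbbm{S}=\bigcap_{\|x^*\|_*=1}\mathbbm{S}_{x^*}$, and the argument at the end of the previous section already yields $\|\wh{\mu}_n-\mu\|\le 2r$ on the good event. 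So the real work is to verify hypotheses (1)--(3) of Theorem \ref{thm:uniform-MOM} and optimize the resulting $r$.

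For condition (1), Chebyshev gives $p_m(\eta)\le R^2/(m\eta^2)$, so it suffices to choose
\[
\eta_0 \;=\; c_1 R\sqrt{\tfrac{k}{n}} \;\asymp\; R\sqrt{\tfrac{\log(2/\delta)}{n}}~.
\]
For condition (2), apply the dual Sudakov (or standard Sudakov minoration) inequality in $L_2(\nu)$: since $\E\sup_{f\in\F}\sum_i g_i f(X_i-\mu) = \E\|G\|$ up to normalization (where $g_i$'s yield the Gaussian $G$ with covariance $\Sigma$), one gets
\[
\log\mathcal{M}(\F,\eta_1 D)\;\lesssim\; \bigl(\E\|G\|/\eta_1\bigr)^2~,
\]
so $\eta_1 \asymp \E\|G\|/\sqrt{k}$ ensures $\log\mathcal{M}(\F,\eta_1 D)\le c_2 k$, which is $\le c_2 k\log(e/p_m(\eta_0))$ since $p_m(\eta_0)\le 1/20$. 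For condition (3), note that $W\subset 2B_{\|\cdot\|_*}$, hence by linearity and standard de-symmetrization
\[
\E\sup_{w\in \ol{W}}\Bigl|\sum_{i=1}^n \eps_i w(X_i)\Bigr| \;\le\; 2\,\E\Bigl\|\sum_{i=1}^n \eps_i(X_i-\mu)\Bigr\| \;=\; 2\sqrt{n}\,\E\|\zeta_n\|~,
\]
so one may take $\eta_2 \asymp \E\|\zeta_n\|/\sqrt{n}$, augmented by the forced lower bound $\eta_2 \ge c_0\eta_1/\sqrt{m}\asymp \E\|G\|/\sqrt{n}$ coming from the hypothesis on $\eta_2$ in Theorem \ref{thm:uniform-MOM}.

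Combining these choices, $r=\eta_0+\eta_2$ is dominated by
\[
\frac{1}{\sqrt{n}}\Bigl(R\sqrt{\log(2/\delta)} \;+\; \E\|G\| \;+\; \E\|\zeta_n\|\Bigr)~,
\]
which is at most a constant multiple of $n^{-1/2}\max\{\E\|\zeta_n\|,\,\E\|G\|+R\sqrt{\log(2/\delta)}\}$, the bound claimed in the theorem. The slab argument from the preceding subsection then yields $\|\wh{\mu}_n-\mu\|\le 2r$ on the event of Theorem \ref{thm:uniform-MOM}, whose probability is at least $1-2e^{-c_4k}\ge 1-\delta$ by the choice of $k$.

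The main obstacles are twofold. First, one must be sure that the ``$\F-\F$ inside $\eta_1 D$'' set $W$ can be bounded uniformly by $2\|\cdot\|_*$-balls; this is automatic because $\F$ consists of linear functionals and the dual norm is a norm, but one has to handle the centering (replacing $W$ by $\ol{W}$) without losing. Second, and more delicate, is coordinating the three parameters: $\eta_1$ is not free (it controls the net and hence (2)) and it feeds back into $\eta_2$ through the constraint $\eta_2\ge c_0\eta_1/\sqrt{m}$, so Sudakov's inequality is used twice in spirit --- once to bound the net and once via the Rademacher process. Getting the final expression to have $\E\|G\|$ additive with $R\sqrt{\log(2/\delta)}$ and in \emph{max} with $\E\|\zeta_n\|$ (rather than plain sum) is a matter of noting that the two sources of error (net size and individual deviation) enter in a max through the definition $r=\eta_0+\eta_2$, together with standard manipulations comparing $\E\|\zeta_n\|$ and $\E\|G\|+R\sqrt{\log(2/\delta)}$.
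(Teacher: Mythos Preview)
Your proposal is correct and follows precisely the route the paper sketches: apply Theorem~\ref{thm:uniform-MOM} to the class of norm-one linear functionals (or the extreme points of the dual unit ball), verify hypotheses (1)--(3) via Chebyshev, Sudakov minoration, and the trivial inclusion $W\subset 2B_{\|\cdot\|_*}$, then invoke the slab-intersection construction to produce $\wh{\mu}_n$. The paper itself defers the detailed verification of (1)--(3) to \cite{LuMe18a}, and your parameter choices $\eta_0\asymp R\sqrt{k/n}$, $\eta_1\asymp \E\|G\|/\sqrt{k}$, $\eta_2\asymp \max\{\E\|\zeta_n\|,\E\|G\|\}/\sqrt{n}$ are exactly the right ones; note also that since $\nu$ is the law of $X-\mu$ and every $w\in W$ is linear, $\E_\nu w=0$ and so $\ol{W}=W$, which removes the ``centering'' concern you flag at the end.
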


As is explained in \cite{LuMe18a}, Theorem \ref{thm:norm-estimation}, 
there is a good reason to believe that the bound of the theorem is
optimal in a rather strong sense. 
We refer the reader to \cite{LuMe18a} for more details.

\begin{remark}
  Note that the error in Theorem \ref{thm:norm-estimation} has two
  types of terms: $\frac{R}{\sqrt{n}} \sqrt{\log(2/\delta)}$ is the
  standard one-dimensional sub-Gaussian error, and its source is
  the marginal $x^*(X)$ with the largest variance. At the same time,
  $\E\|G\|$ and $\E\|\zeta_n\|$ are `global' parameters that calibrate the
  `complexity' of the norm. When $\|\cdot\|$ is the Euclidean norm, 
we recover the two terms on the right-hand side of (\ref{eq:subgaussmulti}).
\end{remark}

\subsection{$L_2$ distance oracles} \label{sec:distance-oracle}

In this section we sketch how the ideas used in Theorem \ref{thm:uniform-MOM} may be used to generate a median-of-means based (isomorphic) \emph{$L_2$ distance oracle}. A more accurate description of distance oracles may be found in \cite{LuMe16}.

Suppose $\F$ is a class of real-valued functions on a probability
space $(\Omega,\nu)$ and let $X$ be a random variable distributed as
$\nu$.  There are many natural situations in which, given an i.i.d.\
sample $X_1,\ldots,X_n$, one would like to have an accurate estimate
on the $L_2$ distance $\|f-h\|_{L_2}\defeq \sqrt{\EXP (f(X)-h(X))^2}$
between any two class members $f,h\in \F$.

In some cases, the estimates are required to be almost isometric,
that is, with high probability, for all $f,h\in \F$, the estimate should lie in the range
$[(1-\eta)\|f-h\|_{L_2}, (1+\eta)\|f-h\|_{L_2}]$ for some small value
of $\eta$. However, in many
situations (for example, in the regression problem we describe in
Section \ref{sec:regression}),
 a weaker property suffices: one would like to define a data-dependent
functional $\wh{\Psi}_n$ such that , with 'high' probability, for
all $f,h \in \F$ and a `small' value $r$, and some constants $0<\alpha<1<\beta$,
\begin{description}
\item{$\bullet$} if $\wh{\Psi}_n(f,h) \geq r$ then $\alpha \|f-h\|_{L_2}\leq \wh{\Psi}_n(f,h) \leq \beta \|f-h\|_{L_2}$;
\item{$\bullet$} if $\wh{\Psi}_n(f,h) \leq r$ then $\|f-h\|_{L_2} \leq r/\alpha$.
\end{description}
In other words, for every $f,h \in \F$, based on the value of the
data-dependent functional $\wh{\Psi}_n(f,h)$ one may estimate
$\|f-h\|_{L_2}$ in an isomorphic way---i.e., up to multiplicative constants.
We call such a functional a \emph{distance oracle}.

For the sake of simplicity, instead of considering simultaneous
estimation of pairwise distances of functions, we address the problem
of estimating $L_2$ norms of functions. In other words, given a class $\F$
of functions as above, we are interested in constructing a 
data-dependent functional $\wh{\Psi}_n$ such that if $\wh{\Psi}_n(f)
\geq r$ then  $\alpha \|f\|_{L_2} \leq \wh{\Psi}_n(f) \leq \beta
\|f\|_{L_2}$, and if $\wh{\Psi}_n(f) < r$ then $\|f\|_{L_2}
\lesssim r/\alpha$. 
Such a functional may be called a \emph{norm oracle}.
Given a norm oracle, one may construct a distance oracle in an obvious
way.

In what follows we assume that there is some $q>2$ such that the $L_q$
and $L_2$ norms are equivalent on
$\{f_1-f_2 : f_1,f_2 \in \F \cup \{0\} \}$. In other words, there is a
constant $L$ such that $\|f_1-f_2\|_{L_q} \leq L \|f_1-f_2\|_{L_2}$
for all
$f_1,f_2 \in \F \cup \{0\}$.  Consider the set
$$
H={\rm star}(\F,0) = \{\lambda f : f \in \F, \ 0 \leq \lambda \leq 1\}
$$
and let $H_\rho = H \cap \rho S(L_2)$, where $\rho S(L_2) = \{h : \|h\|_{L_2} =\rho\}$. For every $h \in H$, set
$$
Z_h(j) = \frac{1}{m} \sum_{i \in B_j} |h(X_i)|
$$
and our estimator $\wh{\Psi}_n(h)$ is the median of $Z_h(1),\ldots,Z_h(k)$.

Recall that $D=\left\{f: \EXP f(X)^2\le 1\right\}$ denotes the unit ball in $L_2(\nu)$ and let ${\cal M}(\F,rD)$ be the maximal cardinality of a subset of $\F$ that is $r$-separated with respect to the $L_2(\nu)$ norm.

\begin{theorem} \label{thm:distance}
There exist constants $c_1, A, B$ that depend on $q$ and $L$, and absolute constants $c_2,\ldots,c_6$ such that the following holds. Let $m=c_1(L,q)$ and set $k=n/m$. Under the $L_q-L_2$ norm equivalence condition, if
$$
\log{\cal M}(H_\rho, c_2 A \rho D) \leq c_3k~,
$$
and
$$
\E \sup_{w \in (H_\rho-H_\rho) \cap c_2 A \rho D} \left|\sum_{i=1}^n \eps_i w(X_i) \right| \leq c_4 A \rho n~,
$$
then with probability at least $1-2\exp(-c_5k)$, for all $h\in H_\rho$, 
\begin{description}
\item{$\bullet$} if $\|h\|_{L_2} \geq \rho$ then $A \|h\|_{L_2} \leq \wh{\Psi}_n(h) \leq B \|h\|_2$; and
\item{$\bullet$} if $\|h\|_{L_2} \leq \rho$ then $\wh{\Psi}_n(h) \leq c_6 B \rho$.
\end{description}
\end{theorem}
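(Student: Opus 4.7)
The plan is to follow the three-step small-ball template developed in the proofs of Theorem \ref{thm:mmtournament} and Theorem \ref{thm:uniform-MOM}, this time applied to the class $\{|h| : h \in H_\rho\}$. The goal is to exhibit a high-probability event on which, for every $h \in H_\rho$, one has $Z_h(j) \in [A\rho, B\rho]$ on a strict majority of the $k$ blocks, so that the median $\wh{\Psi}_n(h)$ automatically lies in the same interval. The first bullet of the theorem is then immediate. The second bullet is inherited from the upper half of the same argument, which uses only $\EXP|h(X)| \leq \|h\|_{L_2}$ --- an inequality that continues to hold for $h\in H$ with $\|h\|_{L_2}\leq\rho$; alternatively, by the star-shape of $H$, such $h$ can be pointwise-scaled and the estimate transferred.

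For the first step (individual block estimate), the $L_q$-$L_2$ norm equivalence combined with a Paley-Zygmund type argument produces constants $\alpha_0,\beta_0$ depending only on $L$ and $q$ for which $\EXP|h(X)| \in [\alpha_0 \rho, \rho]$ and $\var(|h(X)|)\leq \rho^2$ for every $h\in H_\rho$. Chebyshev's inequality then gives $|Z_h(j)-\EXP|h(X)|| \leq \rho\sqrt{20/m}$ with probability at least $0.95$ per block; taking $m=c_1(L,q)$ large enough makes this deviation smaller than $\alpha_0\rho/4$, and Hoeffding's inequality across the $k$ blocks ensures that, for the fixed $h$, at least $0.9k$ blocks satisfy $Z_h(j) \in [3\alpha_0\rho/4,\, 5\rho/4]$, with probability at least $1-e^{-ck}$. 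A union bound over a maximal $c_2 A\rho$-separated subset $V\subset H_\rho$ (of cardinality at most $e^{c_3 k}$ by the covering hypothesis) extends this conclusion simultaneously to all net points $h_0\in V$ with probability at least $1-e^{-c'k}$, provided $c_3$ is a sufficiently small absolute constant.

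The third step --- passing from the net $V$ to all of $H_\rho$ --- is the main obstacle. For $h\in H_\rho$ with nearest net point $h_0(h)\in V$, the residual $w=h-h_0(h)$ lies in $W=(H_\rho-H_\rho)\cap c_2A\rho D$, and the pointwise bound $\big||h(x)|-|h_0(x)|\big| \leq |w(x)|$ shows that it suffices to prove that the block averages $(1/m)\sum_{i\in B_j}|w(X_i)|$ exceed $CA\rho$ on at most $0.1k$ blocks, uniformly in $h$. This is carried out along the lines of the treatment of \eqref{eq:osc} in the proof of Lemma \ref{lem:tournament}: the block count is bounded by $(CA\rho)^{-1}$ times the global average; the expected supremum of the latter is controlled by $\sup_{w\in W}\EXP|w(X)| \leq c_2 A\rho$ plus a symmetrized Rademacher term, to which the contraction principle applies (since $t\mapsto|t|$ is $1$-Lipschitz), reducing it to the Rademacher average in hypothesis $(3)$ and hence bounded by $c_4 A\rho$; and concentration of the supremum around its mean is provided by the bounded-differences inequality. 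Intersecting the events from the three steps (total failure probability at most $2e^{-c_5 k}$), every $h\in H_\rho$ satisfies $Z_h(j)\in[A\rho,B\rho]$ on strictly more than $k/2$ blocks for suitable constants $A,B$ built from $\alpha_0,\beta_0$ and the absolute constants, and the median $\wh{\Psi}_n(h)$ inherits the same two-sided bound.
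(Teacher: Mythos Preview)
Your proposal is correct and follows the paper's three-step small-ball template almost exactly: individual block control, union bound over a net in $H_\rho$, and oscillation control via symmetrization, contraction, and bounded differences. The one substantive difference is in Step~1. The paper establishes the per-block two-sided estimate $Z_h(j)\in[A\rho,B\rho]$ by invoking a Berry--Esseen argument: under the $L_q$--$L_2$ equivalence, $\sqrt{m}(Z_h(j)-\E|h|)$ is approximately Gaussian once $m=c(q,L)$, which yields the small-ball bound $\PROB(|Z_h(j)|\le c_1\|h\|_{L_2})\le 0.05$. You instead use the $L_q$--$L_2$ equivalence through log-convexity of $L_p$ norms (a Paley--Zygmund type bound) to get $\E|h(X)|\ge \alpha_0\rho$ directly, and then Chebyshev on each block. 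This is more elementary---it avoids Berry--Esseen entirely---and gives the same conclusion with $m\gtrsim \alpha_0^{-2}$, which is again a constant depending only on $q,L$. The remaining steps (net, oscillation, concentration, and the star-shape argument for extending from $H_\rho$ to larger norms) match the paper's sketch line for line.
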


Note that Theorem \ref{thm:distance} shows that $\wh{\Psi}_n$ is a desired norm oracle: if $\wh{\Psi}_n(h) > c_6 B \rho$ then it follows that $\|h\|_{L_2} \geq \rho$, and thus
$$
B^{-1} \wh{\Psi}_n(h) \leq \|h\|_{L_2} \leq A^{-1} \wh{\Psi}_n(h)~.
$$
On the other hand, if $\wh{\Psi}_n(h) \leq c_6 B \rho$ then one has
two options: either $\|h\|_{L_2} \leq \rho$, or,
$\|h\|_{L_2} \geq \rho$, in which case
$\|h\|_{L_2} \leq A^{-1} \wh{\Psi}_n(h) \leq c_6(B/A) \rho$. Thus,
$\|h\|_{L_2} \leq \rho \max\{1,c_6B/A\}$. The norm oracle is obtained
 by setting $r = c_6 B \rho$ and choosing $\alpha$ and $\beta$
appropriately.

The proof of Theorem \ref{thm:distance} follows the small-ball method:
we begin by showing that for a fixed $h \in H_\rho$, and with high
probability,
\begin{equation} \label{eq:cond-net-distances}
\left|\left\{ j : A \rho \leq \frac{1}{m} \sum_{i \in B_j} |h|(X_i) \leq B \rho \right\} \right| \geq 0.8k
\end{equation}
for some constants $A$ and $B$.

Then, the high-probability estimate allows us to control a satisfactory net in $H_\rho$, and finally,
one has to control `oscillations': a high-probability event such that if $h \in H_\rho$ and $\pi h$ denotes the closest point to $h$ in the net, then
$$
\sup_{h \in H_\rho} \left| \left\{ j : \frac{1}{m}\sum_{i \in B_j} |h-\pi h|(X_i) \geq \frac{A \rho}{2}\right\} \right| \leq 0.2k~.
$$
With all three components in place, it is evident that for every $h \in H_\rho$ there are at least $0.6k$  blocks $B_j$ on which
\begin{equation*}
A \rho \leq \frac{1}{m} \sum_{i \in B_j} |\pi h|(X_i) \leq B \rho \ \ \ {\rm and} \ \ \ \frac{1}{m}\sum_{i \in B_j} |h-\pi h|(X_i) \leq \frac{A \rho}{2}~.
\end{equation*}
%and at the same time,
%\begin{equation}
%\frac{1}{m}\sum_{i \in B_j} |h-\pi h|(X_i) \leq \frac{A \rho}{2}.
%\end{equation}
On these  blocks,
$$
\frac{1}{m} \sum_{i \in B_j} |h|(X_i) \geq \frac{1}{m} \sum_{i \in B_j} |\pi h|(X_i) - \frac{1}{m}\sum_{i \in B_j} |h-\pi h|(X_i) \geq \frac{A \rho}{2}~,
%= \frac{A}{2} \|f\|_{L_2},
$$
and a similar estimate holds for the upper bound.

Once an isomorphic estimate is established in
$H_\rho={\rm star}(\F,0) \cap \rho S(L_2)$, the same estimate holds
for any $h \in H_r$ and any $r \geq \rho$. This is evident from the
fact that $H={\rm star}(\F,0)$ is star-shaped around $0$, implying
that every $h \in H_r$ has a `scaled down' version in $H_\rho$. In
particular, on the same event we have that if $f \in \F$ and
$\|f\|_{L_2} \geq \rho$, then
$$
\frac{A}{2} \|f\|_{L_2} \leq \wh{\Psi}_n(f) \leq 2B \|f\|_{L_2}~.
$$
The second part of the claim follows the same lines (see \cite{LuMe16} for more details).

The main question is how to ensure that \eqref{eq:cond-net-distances} holds with high enough probability. As it happens, \eqref{eq:cond-net-distances} can be verified under minimal assumptions, as we now explain.

Assume, for example, that the given class $\F$ satisfies a small-ball condition, namely, for every $\epsilon>0$ there is a constant $\kappa(\epsilon)$ such that for every $f \in \F$,
$$
\PROB(|f(X)| \leq \kappa(\epsilon)\|f\|_{L_2} ) \leq \epsilon~.
$$
Set $\epsilon=0.05$ and let $\kappa=\kappa(0.05)$. Then with probability at least $1-2\exp(-cn)$ there are at least $0.9n$ indices $i \in \{1,\ldots,n\}$ such that $|f(X_i)| \geq \kappa \|f\|_{L_2}$. At the same time,
$$
\PROB(|f(X)| \geq 10 \|f\|_{L_2}) \leq \frac{1}{100}~,
$$
implying that with probability at least $1-2\exp(-cn)$, for at least $0.9n$ indices $1 \leq i \leq n$, $|f(X_i)| \leq 10 \|f\|_{L_2}$. Thus, intersecting the two events \eqref{eq:cond-net-distances} is established with probability at least $1-2\exp(-c^\prime n)$ for $m=1$, $A =\kappa$ and $B=10$.

Of course it is true that not every random variable $f(X)$ satisfies
the small-ball condition we use above. However, there is an additional
degree of freedom that has not been exploited yet: that the random
variables one truly cares about are of the form
$Z_f(j)=\frac{1}{m}\sum_{i \in B_j} |f(X_i)|$, leaving us some room to
generate the necessary regularity. Indeed, it is straightforward to
verify that under minimal assumptions and for a small value of $m$,
the $Z_f(j)$ do satisfy a sufficiently strong small-ball condition. This
is an outcome of a Berry-Esseen type argument
\footnote{The case $q=3$
  is the standard Berry-Esseen theorem while for $2<q<3$ one may use
 generalized Berry-Esseen bounds, see \cite{Pet95}.}: if there is some $q>2$ such that
$\|f\|_{L_q} \leq L \|f\|_{L_2}$ then for $m=c(q,L)$,
$\sqrt{m}(Z_f(j)-\E|f|)$ is `close enough' to a Gaussian variable and
it follows that
$$
\PROB( |Z_f(j)| \leq c_1\|f\|_{L_2}) \leq 0.05~.
$$

\section{Median-of-means tournaments in regression problems} 
\label{sec:regression}

The problem of regression function estimation essentially amounts to
estimating \emph{conditional} expectations and as such, it is a
natural candidate for extending ideas of mean estimation discussed 
in this paper. In this section we explore some of the recent progress 
made in the study of regression problems driven by 
uniform median-of-means estimators. 

The standard setup for regression function estimation may be
formulated as follows. Let $(X,Y)$ be a pair of random variables such
that $X$ takes its values in some set $\X$ while $Y$ is real valued.
Given a class $\F$ of real-valued functions defined on $\X$,
one's goal is  to find $f\in \F$ for which $f(X)$ is a good prediction
of $Y$. The performance of a predictor $f\in \F$ is measured by the 
\emph{mean-squared error} $\EXP (f(X)-Y)^2$, also known as the
\emph{risk}.
The best performance in the class is achieved by the risk minimizer
\[
   f^*= \argmin_{f\in \F} \EXP (f(X)-Y)^2~.
\]
We  assume in what follows that the minimum is attained and $f^*\in
\F$ exists and is unique.

The difficulty stems from the fact that the joint distribution of
$(X,Y)$ is not known. Instead, one is given an i.i.d.\ sample
$\cD_n=(X_i,Y_i)_{i=1}^n$ distributed according to the joint distribution
of $X$ and $Y$. 
Given a sample size $n$, a \emph{learning procedure} is a map
that assigns to each sample
$\cD_n$ a (random) function in $\F$, which we denote by
$\wh{f}$. 

The success of $\wh{f}$ is measured by the tradeoff
between the accuracy $\epsilon$ and the confidence $\delta$ with which $\wh{f}$
attains that accuracy, that is, one would like to find $\wh{f}$
which satisfies that
$$
\PROB \left( \E \left(\left(\wh{f}(X)-Y\right)^2 | \cD_n \right) \leq \inf_{f \in \F} \E (f(X)-Y)^2 + \epsilon \right) \geq 1-\delta
$$
for values of $\epsilon$ and $\delta$ as small as possible.
\footnote{Note that one has the freedom to select a function $\wh{f}$ that does not belong to $\F$.}.
The question of this accuracy/confidence
tradeoff has been the subject of extensive study, see, for example, the books
\cite{VaCh74a,DeGyLu95,GyKoKrWa02,vaWe96,AnBa99,vdG00,Mas06,Kolt08,TS09,BuvdG11}
for a sample of the large body of
work devoted to this question.

The most standard and natural way of choosing $\wh{f}$ is by \emph{least squares regression},
also known as \emph{empirical risk minimization}:
\[
  \wh{f} = \argmin_{f\in \F} \sum_{i=1}^n (f(X_i)-Y_i)^2~.
\]
A sample of the rich literature on the analysis of empirical risk minimization
includes Gy\"orfi, Kohler, Krzyzak, Walk \cite{GyKoKrWa02}, van de
Geer \cite{vdG00}, Bartlett, Bousquet, and Mendelson \cite{BaBoMe05},
Koltchinskii \cite{Kolt08}, Massart \cite{Mas06}.

The simple idea behind empirical risk minimization is that, for each
$f\in \F$, the empirical risk $(1/n) \sum_{i=1}^n (f(X_i)-Y_i)^2$ is a
good estimate of the risk $\EXP (f(X)-Y)^2$ and the minimizer of the
empirical risk should nearly match that of the ``true'' risk. 
Naturally, when the empirical risks are not reliable estimates of
their population counterparts, empirical risk minimization stands on
shaky ground. It should not come as a surprise that the performance of 
empirical risk minimization changes dramatically
according to the tail behaviour of the functions involved in the given
learning problem. One may show (see, e.g., \cite{LeMe16}) that if $\F$
is convex and the random
variables $\{f(X) : f \in \F\}$ and the target $Y$ have well-behaved
sub-Gaussian tails, empirical risk minimization performed in
$\F$ yields good results: it essentially attains the optimal
accuracy/confidence tradeoff for a certain range of the parameters.
However, the situation deteriorates considerably when
either members of $\F$ or $Y$ is
heavy-tailed in some sense. In such cases, the performance of
empirical risk minimization may be greatly improved by employing
more sophisticated mean estimation techniques.
For the analysis of least squares regression for some heavy-tailed
situations, see Han and Wellner \cite{HaWe17}.

A growing body of recent work has addressed the problem of
constructing regression function estimators that work well even when 
some of the $f(X)$ and $Y$ may be heavy tailed, see
Audibert and Catoni \cite{AuCa11},
Brownlees, Joly, and Lugosi \cite{BrJoLu15},
Catoni and Giulini \cite{CaGi17}.
Chichignoud and Lederer \cite{ChLe14},
Fan, Li, and Wang \cite{FaLiWa17},
Hsu and Sabato \cite{HsSa16},
Lecu{\'e} and Lerasle \cite{LeLe17,LeLe17a},
Lecu{\'e}, Lerasle, and Mathieu \cite{LeLeMa18},
Lerasle and Oliveira \cite{LerasleOliveira_Robust},
Lugosi and Mendelson \cite{LuMe16,LuMe18}, 
Mendelson \cite{Men17},
and
Minsker \cite{Min15}.

In this section we limit ourselves to sketching how median-of-means
tournaments may be used in regression function estimation.
Median-of-means tournaments were introduced in \cite{LuMe16} for the
study of such regression problems when $\F$ is a convex set. It was
shown that one may attain
the optimal accuracy/confidence tradeoff in prediction problems in
convex classes. Similar methods were used in \cite{LuMe18} and \cite{LeLe17a} to
study the regularization framework. In these papers
the convexity of the underlying class $\F$ played a central role in
the analysis. In fact, it is convexity that allows one to define an
optimal $\wh{f}$ that takes values in $\F$. In the general case, when
$\F$ need not be convex, selecting $\wh{f} \in \F$ can be a poor
choice (see, e.g. the discussion in \cite{Men15}), and one has to adopt a
totally different approach for naming an estimator.

An optimal choice of $\wh{f}$ for an arbitrary class $\F$ was
introduced by Mendelson \cite{Men17}, and that choice is also based on median-of-means tournament, though a different tournament than the one defined in \cite{LuMe16}.

Finally, we mention the general framework of $\rho$-estimators introduced by Baraud, Birg{\'e}, and Sart \cite{BaBiSa17}
and Baraud and Birg{\'e} \cite{BaBi18}. The construction of their estimators bears certain similarities with the
tournament procedures described here.

For the sake of simplicity, we will only consider the problem of
regression in a closed and convex class $\F$. We set
$$
f^*={\rm argmin}_{f \in \F} \E (f(X)-Y)^2
$$
to be the minimizer in $\F$ of the risk, and since $\F$ is convex and closed, such a minimizer exists and is unique. The excess risk of $f \in \F$ is defined to be
$$
\E {\cal L}_f = \E (f(X)-Y)^2-\E (f^*(X)-Y)^2
$$
and the aim is to ensure that $\E ({\cal L}_{\wh{f}}|D) \leq \epsilon$ with probability at least $1-\delta$.

As one may expect from a median-of-means estimator, we select
$k \leq n$ wisely, split the given sample $(X_i,Y_i)_{i=1}^n$ to $k$
blocks, each of cardinality $m=n/k$, and compare the statistical
performance of every pair of functions on each block. Just as before,
the belief is that because $\E(f^*(X)-Y)^2$ is smaller than
$\E(f(X)-Y)^2$ this fact is exhibited by a median-of-means estimate, allowing us
to prefer $f^*$ over $f$. Hence, if we can find a uniform median-of-means estimator, such a comparison would lead us to a function that
has almost the same risk as $f^*$.

With that in mind, the natural choice of a ``match'' in the tournament
between two candidate functions $f$ and $h$ is counting the number of blocks on which
$\frac{1}{m}\sum_{i \in B_j} (f(X_i)-Y_i)^2$ is larger than
$\frac{1}{m}\sum_{i \in B_j} (h(X_i)-Y_i)^2$. The function that
exhibits a superior performance (i.e., has a smaller empirical mean)
on the majority of the blocks is the winner of the match.

In a perfect world, we would choose a function that won all of its
matches. However, the world is far from perfect and the outcomes of
matches between functions that are `too close' are not reliable. To
address this issue, the tournament requires an additional component: a
\emph{distance oracle}, similar to the one presented in the previous
section. Thanks to the distance oracle one may verify in a
data-dependent way when two functions are too close, and in such cases
disregard the outcome of the match between them.

Let us describe some technical facts that are at the heart of the results in \cite{LuMe16,LuMe18}.
Define the ``quadratic'' and ``multiplier'' processes
$$
\mathbbm{Q}_{f,h}(j) = \frac{1}{m}\sum_{i \in B_j} (f(X_i)-h(X_i))^2, \ \ \ \mathbbm{M}_{f,h}(j)= \frac{2}{m}\sum_{i \in B_j} (f(X_i)-h(X_i)) (h(X_i)-Y_i)
$$
and put
$$
\mathbbm{B}_{f,h}(j) \equiv \frac{1}{m}\sum_{i \in B_j}(f(X_i)-Y_i)^2 - \frac{1}{m}\sum_{i \in B_j}(h(X_i)-Y_i)^2 = \mathbbm{Q}_{f,h}(j)+\mathbbm{M}_{f,h}(j)~.
$$
Note that $\E \mathbbm{B}_{f,h}(j)=\E (f(X)-Y)^2 - \E (h(X)-Y)^2$. Therefore, at least intuitively, if $\mathbbm{B}_{f,h}(j)>0$ for a majority of indices $1 \leq j \leq k$, one would expect that $\E(f(X)-Y)^2 > \E (h(X)-Y)^2$, making $h$ a better candidate to be a risk minimizer than $f$.

When one is given a sample $(X_i,Y_i)_{i=1}^{3n}$, the choice of $\wh{f}$ is carried out as follows:
\begin{tcolorbox}
Step 1:
\begin{description}
\item{$\bullet$} Fix $r>0$, corresponding to the desired accuracy parameter $\epsilon \sim r^2$.
\item{$\bullet$} Let $\wh{\Phi}_n$ be a distance oracle in $\F$ similar to the one described in the previous section, which uses as data the first part of the sample $(X_i)_{i=1}^n$. Thus, for the right choice of parameters $\alpha$ and $\beta$ and with high probability the following holds: if $f,h \in \F$ and $\wh{\Phi}_n(f,h) \geq \beta r$ then $\|f-h\|_{L_2} \sim_{\alpha,\beta} \wh{\Phi}_n(f,h)$, and if $\wh{\Phi}_n(f,h) \leq \beta r$ then $\|f-h\|_{L_2} \leq (\beta/\alpha)r$.

 Define ${\cal DO}(f,h)=1$ if $\wh{\Phi}_n(f,h) \geq \beta r$ and ${\cal DO}(f,h)=0$ otherwise.
\end{description}
\end{tcolorbox}
The binary valued functional ${\cal DO}$ serves as the `referee' of the tournament. Its role is to decide when a match between two functions is allowed to take place. In a more mathematical language, when ${\cal DO}(f,h)=1$ one has a good reason to expect that $f$ and $h$ are far enough to ensure that $(\mathbbm{B}_{f,h}(j))_{j=1}^k$ reflects the true value $\E(f(X)-Y)^2 - \E (h(X)-Y)^2$.

\begin{tcolorbox}
Step 2:
\begin{description}

\item{$\bullet$} This round of the tournament consists of statistical matches between class members which are preformed using the second part of the sample $(X_i,Y_i)_{i=n+1}^{2n}$. A match is allowed to proceed only if ${\cal DO}(f,h)=1$; otherwise, the match is drawn. If a match does take place then $h$ defeats $f$ if $\mathbbm{B}_{f,h}(j)>0$ for a majority of indices $j$, and $f$ defeats $h$ if the reverse inequality holds for a majority of the blocks.

\item{$\bullet$} A function $f$ qualifies from this round if it has has won or drawn all of its matches.
\end{description}
\end{tcolorbox}

The crucial fact behind Step 2 is that, with high
probability, the risk minimizer $f^*$ qualifies for the next round: if
${\cal DO}(h,f^*)=1$ then $h$ and $f^*$ are far enough to ensure that
$(\mathbbm{B}_{h,f^*}(j))_{j=1}^k$ reflects the true value
$\E(h(X)-Y)^2 - \E (f^*(X)-Y)^2$. Since $f^*$ is the unique
minimizer of the risk, the majority of values are positive.

Moreover, the same argument implies that if $h$ is a qualifier from
Step 2, then $\|h-f^*\|_{L_2} \leq \beta r$. Indeed,
the match between $h$ and $f^*$ (or between any two qualifiers) must
have been drawn by the referee's decision; thus $h$ must be `close' to $f^*$.

Step 2 is not enough to identify a function with a small excess risk. Indeed, all the qualifiers are close to $f^*$, but the fact that $\|f-f^*\|_{L_2} \leq \beta r$ does not imply that $\E(f(X)-Y)^2-\E(f^*(X)-Y)^2 \lesssim r^2$. Therefore, the tournament has an additional step: the \emph{Champions League} round, in which all the qualifiers play each other in a different type of match.

To find a function that does have an almost optimal risk one uses the third part of the sample $(X_i,Y_i)_{i=2n+1}^{3n}$ to define a `home and away' style matches:
\begin{tcolorbox}
Step 3:
\begin{description}
\item{$\bullet$} Let $\Psi_{h,f}=(h(X)-f(X))(f(X)-Y)$ and set $\Psi_{h,f}(j) = \frac{1}{m}\sum_{i \in B_j} \Psi_{h,f}(X_i,Y_i)$. Let $\alpha,\beta$ and $r$ be as above and put $r_1=2(\beta/\alpha)r$.
\item{$\bullet$}   $f$ wins its home match against $h$ if $\Psi_{h,f}(j) \geq -r_1^2/10$ for a majority of the indices $j$.
\item{$\bullet$} A winner of the tournament is any qualifier that wins all of its home matches. We set $\wh{f}$ to be any such winner.
\end{description}
\end{tcolorbox}
To see the reason behind this choice of matches, recall that all the qualifiers $h$ satisfy that $\|h-f^*\|_{L_2} \leq \beta r$. At the same time, the excess risk of $h$ is
$$
\E (h(X)-Y)^2 - \E (f^*(X)-Y)^2 = \|h-f^*\|_{L_2}^2 + 2 \E (h(X)-f^*(X)) \cdot (f^*(X)-Y)~.
$$
Since $\|h-f^*\|_{L_2}^2$ is of the order of $r^2$ it is evident that if $\E (h(X)-f^*(X)) \cdot (f^*(X)-Y) \lesssim r^2$, then the excess risk of $h$ is also of the order of $r^2$.

Observe that $\E \Psi_{h,f^*}=\E (h(X)-f^*(X)) \cdot (f^*(X)-Y)$ and that by the convexity of $\F$, $\E \Psi_{h,f^*} \geq 0$ (this follows from the characterization of the nearest point map onto a closed, convex subset of a Hilbert space). Moreover,
\begin{equation} \label{eq:reg-psi}
\E \Psi_{h,f^*}=-\|h-f^*\|_{L_2}^2 - \E\Psi_{f^*,h}~.
\end{equation}
One shows that $\Psi_{h,f^*}(j) \gtrsim -r^2$ for a majority of indices $j$. This follows because the median of $(\Psi_{h,f^*}(j))_{j=1}^k$ happens to be a uniform median-of-means estimator of the true mean $\E \Psi_{h,f^*}$. As a consequence, $f^*$ wins all of its home matches. Also, if $h$ wins a home match against $f^*$, (i.e., $\Psi_{f^*,h}(j) \gtrsim -r^2$ for a majority of indices $j$), then $\E \Psi_{f^*,h} \gtrsim - r^2$ and by \eqref{eq:reg-psi}, $\E \Psi_{h,f^*} \lesssim r^2$. That implies that every function that wins all of its home matches must have a small excess risk.

To conclude, all three components of the tournament procedure from \cite{LuMe16} are derived using uniform median-of-means estimators (of different functionals) in the class $\F$.

Without going into technical details, at the heart of the analysis of Steps $2$ and $3$ of the tournament is the following fact: given a convex class $\F$ that satisfies some minimal conditions, for the right choice of $k$ and $r$ (the choice of $r$
depends on the geometry of the class $\F$ and on the parameters
$\gamma_1$ and $\gamma_2$ appearing below), and for an absolute
constant $c_1$, we have that, with probability $1-2\exp(-c_1k)$,
\begin{tcolorbox}
\begin{description}
\item{(1)} for every $f \in \F$ such that $\|f-f^*\|_{L_2} \geq r$, one has
%\begin{equation} \label{eq:basic-3}
\[
\mathbbm{B}_{f,f^*}(j) \geq \gamma_1 \|f-f^*\|_{L_2}^2
\]
%\end{equation}
for $0.99k$ of the blocks;
\item{(2)} for every $f \in \F$ such that $\|f-f^*\|_{L_2} < r$, one has
%\begin{equation} \label{eq:basic-2}
\[
|\mathbbm{M}_{f,f^*}(j) - \E \mathbbm{M}_{f,f^*}(j)| \leq \gamma_2 r^2
\]
%\end{equation}
for $0.99k$ of the  blocks.
\end{description}
\end{tcolorbox}

These facts suffice for proving the validity of steps $(2)$ and $(3)$
in the tournament procedure. A general bound for the performance of
the procedure defined above was proven by Lugosi and Mendelson
\cite{LuMe16}. The achievable accuracy depends on the interaction
between the geometry of the class $\F$ and the distribution of
$(X,Y)$. Instead of recalling the technical details in their full generality, we simply
illustrate the performance on the canonical example of linear regression.

Let $\F=\{\inr{t,\cdot} : t \in \R^d\}$ be the class of linear
functionals on $\R^d$. Let $X$ be an isotropic random vector in $\R^d$
(i.e., $\E\inr{t,X}^2 = 1$ for every $t$ in the Euclidean unit sphere)
and assume that the distribution of $X$ is such that
there are $q>2$ and $L>1$ for which, for every
$t \in \R^d$, $\|\inr{X,t}\|_{L_q} \leq L \|\inr{X,t}\|_{L_2}$.

Assume that one is given $n$ noisy measurements of $\inr{t_0,\cdot}$
for a fixed but unknown $t_0 \in \R^d$, that is, assume that
$Y=\inr{t_0,X}+W$ for some
symmetric random variable $W$ that is independent of $X$ and has
variance $\sigma^2$.
One observes the ``noisy" data $(X_i,Y_i)_{i=1}^n$ and
the aim is to approximate $t_0$
with a small error (accuracy) and with high probability (confidence).

Invoking standard methods as in \cite{LeMe16b}, the
best that one can guarantee using empirical risk minimization is a choice of $\wh{t} \in
\R^d$, for which 
$$
\|\wh{t}-t_0\|_2 
%= \left\|\inr{\wh{t},X} - \inr{t_0,X}\right\|_{L_2} 
\leq \frac{C }{\delta} \sigma\sqrt{\frac{d}{n}} \ \ \ {\rm with \ probability \ } 1-\delta-2\exp(-c_1 d)
$$
for some constant $C$ that depends on $q$ and $L$.
Therefore, if one wishes for an error that is proportional to 
$\sigma \sqrt{d/n}$, the best that one can hope for is a constant
confidence $\delta$. 

The median-of-means tournament procedure, when applied to this example, selects $\wh{t}$ for which
\[
\|\wh{t}-t_0\|_2 \leq C \sigma\sqrt{\frac{d}{n}} \ \ \ {\rm with \ probability \ } 1-2\exp(-cd)
\]
for some numerical constants $c,C>0$. As it is argued in \cite{LuMe16}
that this is the optimal confidence at any level that is proportional
to $\sqrt{d/n}$. 
In fact, the median-of-means tournament procedure gives the optimal confidence for
any accuracy $r \geq c^\prime\sigma \sqrt{d/n}$. Standard empirical
risk minimization can only achieve such accuracy/confidence tradeoff
for sub-Gaussian distributions.

\medskip
\noindent
{\bf Acknowledgements.} We thank Sam Hopkins, Stanislav Minsker, and Roberto Imbuzeiro Oliveira for
illuminating discussions on the subject.
We also thank two referees for their thorough reports and insightful comments.

%\bibliographystyle{plain}
%\bibliography{survey}

\end{document}